\newtheorem{thm}{Theorem}[section]
\newtheorem{cor}[thm]{Corollary}
\newtheorem{lemma}[thm]{Lemma}
\newtheorem{prop}[thm]{Proposition}
\theoremstyle{definition}
\newtheorem{defn}[thm]{Definition}
\theoremstyle{remark}
\newtheorem{remark}[thm]{Remark}
\begin{document}

\title{Motivic Weight Complexes for Arithmetic Varieties}
\author{Henri Gillet }
\address{Department of Mathematics, Statistics, and Computer Science\\
University of Illinois at Chicago\\
322 Science and Engineering Offices (M/C 249)\\
851 S. Morgan Street\\
Chicago, IL 60607-7045\\
U.S.A.}
\email{gillet@uic.edu}

\author{Christophe Soul\'e}
\address{Institut des Hautes \'Etudes Scientifiques\\
35 route de Chartres\\
91440 Bures-sur-Yvette\\
France}
\email{soule@ihes.fr}

\dedicatory{To Paul Roberts}
\date{\today}
\thanks{This research partially supported by NSF grant DMS-0500762}
\date{\today}


\maketitle

\tableofcontents

\newpage

\section*{Introduction}

In this paper we extend  the results in the paper \cite{Gillet-Soule-motives-descent} to arithmetic varieties.  While in {\it op. cit.}, our main results were framed in terms of Chow motives with integral coefficients, in this paper we shall use $K_0$-motives with rational coefficients.  This is because we do not have resolution of singularities, but rather must appeal to De Jong's results in \cite{de-jong-fourier}.

In addition, De Jong's results in \cite{de-jong-fourier} lead us to extend the theory of weight complexes to Deligne-Mumford stacks. However this is not a substantial generalization, since the weight complexes of a stack and of its ``coarse space'' are homotopy equivalent.

In this paper we shall prove the following theorem, where $S$ is a base scheme satisfying the condition (C) below.

\begin{thm}
\label{motstacks}
There is a covariant functor $h : \mbox{\rm Stack}_S \to \mbox{\rm Ho} \, (C_* (K(S))$ from the category of (separated)
Deligne-Mumford stacks of finite type over $S$,
to the category of homotopy classes of maps of bounded complexes of (homological) $K_0$-motives over $S$ with
rational coefficients, having the following properties:
\begin{itemize}
\item If $X$ is a regular scheme, projective over $S$, then $h(X)$ is the usual motive of $X$.
\item If $X$ is a regular scheme, projective over $S$, and $G$ is a finite group acting on $X$, then $h([X/G]) = h(X)^G$.
Here $[X/G]$ is the quotient stack associated to the action.
\item If $\mathfrak{Y} \subset \mathfrak{X}$ is a closed substack with complement $\mathfrak{U}$, then we have a triangle
$$
h(\mathfrak{Y}) \to h(\mathfrak{X}) \to h(\mathfrak{U}) \to h(\mathfrak{Y}) \, [+ 1] \, .
$$
\end{itemize}
\end{thm}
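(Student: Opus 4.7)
The plan is to adapt the construction of \cite{Gillet-Soule-motives-descent} to the arithmetic setting by replacing resolution of singularities with de Jong's alterations, paying the price of passing to rational coefficients so as to absorb the generic degree of an alteration. For a Deligne--Mumford stack $\mathfrak{X}$, I would reduce to the case of a quotient stack $[X/G]$ with $X$ regular projective over $S$ by applying an equivariant refinement of de Jong's theorem; the idempotent $\frac{1}{|G|}\sum_{g\in G} g^{*}$ in $\mathrm{End}(h(X))\otimes\mathbb{Q}$ then extracts the $K_{0}$-motive $h(X)^{G}$ demanded by the second property, and in particular gives a starting definition on all regular projective stacks.

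The construction of $h(\mathfrak{X})$ proceeds by Noetherian induction on $\dim\mathfrak{X}$. Given $\mathfrak{X}$, choose a proper surjective morphism $\pi:[X/G]\to\mathfrak{X}$, with $X$ regular projective over $S$, that is generically étale; let $\mathfrak{Y}\subset\mathfrak{X}$ be a closed substack outside which $\pi$ is étale of the generic degree, and set $\mathfrak{E}=\pi^{-1}(\mathfrak{Y})$. Both $\mathfrak{Y}$ and $\mathfrak{E}$ have strictly smaller dimension, so $h(\mathfrak{Y})$ and $h(\mathfrak{E})$ are defined inductively, while $h([X/G])=h(X)^{G}$ is in hand. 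I would then define $h(\mathfrak{X})$ as the (shifted) mapping cone of a suitable morphism $h(\mathfrak{E})\to h([X/G])\oplus h(\mathfrak{Y})$, normalized by the generic degree. The third property (distinguished triangle for a closed-open decomposition) is immediate on the open complement of $\mathfrak{Y}$ and is propagated to all of $\mathfrak{X}$ by the very shape of the construction.

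The main obstacle is showing that $h(\mathfrak{X})$ is well defined up to canonical homotopy equivalence, independently of the alteration datum $(\pi,\mathfrak{Y})$, and functorially in $\mathfrak{X}$. The standard tactic is to dominate any two choices by a third, obtained by applying de Jong equivariantly to a fiber product, and then to compare the resulting cones by induction on dimension, using $\mathbb{Q}$-coefficient descent for finite equivariant covers of regular projective schemes. The need to land in $\mathrm{Ho}(C_{*}(K(S)))$ rather than in a strict category reflects exactly this flexibility: the various comparison squares commute only up to coherent homotopy, and organizing these homotopies consistently across dimensions is where the bulk of the technical work lies. Functoriality for an arbitrary morphism $\mathfrak{X}'\to\mathfrak{X}$ is then handled by choosing compatible alteration data upstairs and downstairs (again using de Jong on a fiber product), and reading off the induced map of cones; the first property, that $h(X)$ agrees with the usual motive for $X$ regular projective, falls out of the trivial alteration $X\to X$, and the reduction between a stack and its coarse space reduces to the same averaging argument that gives property two.
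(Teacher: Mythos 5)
Your proposal takes a genuinely different route from the paper, and the difference matters. The paper builds a systematic machine: proper hypercovers of a stack by simplicial varieties, a homological descent theorem for rational $G$-theory with respect to such hypercovers (Theorem~\ref{descent}), the functor $\Gamma_*$ on arrows of simplicial \emph{regular} projective varieties and its unique extension to arrows of simplicial projective varieties sending proper hypercovers to homotopy equivalences (Theorem~\ref{maps}), then $h(X.)$ for arbitrary simplicial varieties via split hypercovers and compactifications (Theorem~\ref{simplicial}), and finally $h(\mathfrak{X}) := h(X.)$ for any proper hypercover $X. \to \mathfrak{X}$. You instead propose a Noetherian induction on dimension, constructing $h(\mathfrak{X})$ directly as a mapping cone attached to a single alteration datum $(\pi,\mathfrak{Y})$.

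The central gap in your proposal is well-definedness. You write that "organizing these homotopies consistently across dimensions is where the bulk of the technical work lies," but this \emph{is} the theorem, and you have not supplied it. Dominating two alteration data by a third produces comparison homotopy equivalences between the two cones, but these comparisons must themselves be shown independent of the dominating choice, and functoriality then requires a further layer of compatibility; naively this regresses. The paper's hypercover formalism is precisely the device that terminates the regress: descent for rational $G$-theory gives, once and for all, that any proper hypercover induces a canonical homotopy equivalence, and Theorem~\ref{maps} (resp.\ Theorem~\ref{universal}) upgrades this to the motivic level. Without either that machinery or an explicit alternative proof of coherence, the cone construction defines an object of $\mathrm{Ho}(C_*(KC_S))$ only after choices, not a functor.

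A secondary but genuine issue: you ask for $\pi:[X/G]\to\mathfrak{X}$ to be \emph{generically \'etale} of the generic degree, so that the open complement can be handled by a transfer divided by that degree. Over a general base $S$ satisfying condition (C), this is not what de Jong's theorem delivers; the paper's Theorem~\ref{quotient} produces a proper map of quotient stacks which is only \emph{representable and radicial} over a dense open, reflecting the possible inseparability in positive or mixed characteristic. The paper therefore compares $h$ across that open via the lemma that finite representable radicial maps are $\mathbf{G}$-equivalences, not via a degree-normalized transfer. Your normalization "by the generic degree" would have to be replaced by this radicial dévissage for the argument to work in the stated generality.

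Finally, a small imprecision: not every Deligne--Mumford stack is a global quotient $[X/G]$, only a dense open substack is (Proposition~\ref{Proposition113}); your opening reduction should be phrased accordingly, though your inductive setup effectively handles this.
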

An immediate consequence of the theorem is:
\begin{cor}\label{Euler}
One can associate to any reduced separated Deligne-Mumford stack $\mathfrak{X}$
of finite type over $S$ an
element $\chi_c(\mathfrak{X})$ in the Grothendieck group $K_0(\mathbf{KM}_S)$ of the category $\mathbf{KM}_S$ of $K_0$-motives over $S$, with the following properties:
\begin{enumerate}
 \item[(i)] If $X$ is a regular projective scheme over $S$ equipped with an action by a finite group $G$ then
$\chi_c([X/G]) $ is the class of
$$(X,\frac{1}{\#(G)}\sum_{g\in G}[\mathcal{O}_{\Gamma(g_*)}]);$$ 
here $[X/G]$ is the quotient stack associated to the action of $G$ on $X$ and $\Gamma(g_*)$ is the graph of the action $g_*:X\to X$ of an element $g\in G$.
\item[(ii)] If $\mathfrak{Y}\subset\mathfrak{X}$ is a closed substack, with complement
$\mathfrak{X}\setminus\mathfrak{Y}$,
$$\chi_c(\mathfrak{X})=\chi_c(\mathfrak{Y})+\chi_c(\mathfrak{X}\setminus\mathfrak{Y}) .$$
\end{enumerate}

\end{cor}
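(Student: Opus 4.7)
\medskip

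\noindent\textbf{Proof proposal for Corollary \ref{Euler}.}

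The natural plan is to produce $\chi_c(\mathfrak{X})$ by taking the Euler characteristic of the weight complex supplied by Theorem~\ref{motstacks}. Concretely, I would define
$$
\chi_c(\mathfrak{X}) \;:=\; \sum_{i} (-1)^i \, [h_i(\mathfrak{X})] \;\in\; K_0(\mathbf{KM}_S),
$$
where $h_i(\mathfrak{X})$ are the components of a representative of $h(\mathfrak{X}) \in \mbox{Ho}(C_*(K(S)))$. The first thing to check is that this is independent of the choice of representative: two homotopy equivalent bounded complexes in an additive category have equal alternating sums of components in $K_0$. Since the category $\mathbf{KM}_S$ of $K_0$-motives is pseudo-abelian (Karoubian), this is a standard argument — the mapping cone of a homotopy equivalence is contractible, a bounded contractible complex splits into a direct sum of complexes of the form $0\to A \xrightarrow{\mathrm{id}} A \to 0$, and each such summand contributes $0$ to the alternating sum. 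Hence $\chi_c$ is a well-defined invariant of the homotopy class $h(\mathfrak{X})$.

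For property (i), I would apply the second bullet of Theorem~\ref{motstacks}, which gives $h([X/G]) = h(X)^G$ when $X$ is regular projective over $S$. For such $X$ the complex $h(X)$ is concentrated in a single degree and equals the usual $K_0$-motive $(X,\mathrm{id})$. The notation $h(X)^G$ denotes the direct summand cut out by the projector $\pi_G = \frac{1}{\#(G)}\sum_{g \in G} g_*$, and in the $K_0$-motive category the endomorphism $g_*: h(X) \to h(X)$ is represented by the class of the structure sheaf of the graph $\Gamma(g_*)$. Thus $h(X)^G$ is literally the motive
$$
\Bigl(X,\;\tfrac{1}{\#(G)}\sum_{g \in G}[\mathcal{O}_{\Gamma(g_*)}]\Bigr),
$$
and its class in $K_0(\mathbf{KM}_S)$ is $\chi_c([X/G])$.

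For property (ii), I would use the third bullet of Theorem~\ref{motstacks}: the closed immersion $\mathfrak{Y} \hookrightarrow \mathfrak{X}$ with open complement $\mathfrak{U} = \mathfrak{X}\setminus\mathfrak{Y}$ gives a distinguished triangle
$$
h(\mathfrak{Y}) \to h(\mathfrak{X}) \to h(\mathfrak{U}) \to h(\mathfrak{Y})[+1].
$$
Representing this by an honest short exact sequence of bounded complexes (equivalently, a cofiber sequence whose mapping cone realizes $h(\mathfrak{U})$ up to homotopy), the degree-wise split short exact sequences of $K_0$-motives yield $[h_i(\mathfrak{X})] = [h_i(\mathfrak{Y})] + [h_i(\mathfrak{U})]$ in $K_0(\mathbf{KM}_S)$; taking alternating sums gives the required additivity.

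The only genuine subtlety is the well-definedness step: one must ensure that the Euler characteristic descends from complexes to homotopy classes, and that the mapping cone of the triangle is represented by a complex whose components are direct sums of those of $h(\mathfrak{Y})$ and $h(\mathfrak{X})$ (or of shifts thereof). Both points are formal consequences of the fact that the weight complex lives in the homotopy category of bounded complexes over a pseudo-abelian additive category, so I expect no substantive obstacle beyond invoking these standard facts carefully.
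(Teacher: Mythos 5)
Your proposal is correct and follows essentially the same route as the paper's own (very terse) proof: the paper simply defines $\chi_c(\mathfrak{X})$ to be the class of the bounded complex $h(\mathfrak{X})$ in $K_0(\mathbf{KM}_S)$ and appeals to Theorem~\ref{motstacks}, with the details (well-definedness under homotopy equivalence, additivity from the triangle, identification of $h(X)^G$ as the image of the averaging projector) referred to \cite{Gillet-Soule-motives-descent}. Your write-up correctly supplies those details.
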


We work over a fixed base scheme $S$ which is regular, excellent, and finite dimensional.
Such an $S$ is sufficient to define the category of $K_0$-motives. However, in order to apply the results of
de Jong to construct weight complexes, we must additionally assume:
\begin{itemize}
\item[(C)] for every finite morphism $\pi : T \to S$, and finite group $G$ acting on $T$ over $S$, the pair $(T,G)$
satisfies 5.12.1 of \cite{de-jong-fourier}.
\end{itemize}

\noindent It is straightforward to check that examples of such an $S$ are:
\begin{itemize}
\item[i)] $S = {\rm Spec} (k)$ with $k$ a field
\item[ii)] $S = {\rm Spec} (\Lambda)$ with $\Lambda$ an excellent Dedekind domain. In particular $S = {\rm Spec} ({\mathcal O}_K)$ with ${\mathcal O}_K$ the ring of integers in a number field.
\end{itemize}

This article is organized as follows.

In the first section, after deriving some basic properties of algebraic stacks,
we deduce from \cite{de-jong-fourier} two results about resolution of singularities,
one for arbitrary Deligne-Mumford stacks over a base scheme $S$ satisfying the condition (C) above, and one about quotient stacks.

In the second paragraph, we discuss simplicial schemes and hypercovers.
We introduce the notion of proper hypercover of a stack ${\mathfrak X}$ by a simplicial scheme and derive some of its properties,
{\it e.g.} the fact that is stable by base change by any morphism from a simplicial scheme to
${\mathfrak X}$ (Lemma~\ref{base change hypercovers stacks}).
We show that any simplicial variety admits a split proper hypercover (Prop.~\ref{split})
and we also show that these coverings can be compactified (Prop.~\ref{compact}).

The next section discusses homological descent, following the method of
SGA4 \cite{SGA4-2}, for arbitrary covariant functors from the category
of proper morphisms between schemes to the category of connective spectra
which satisfy appropriate axioms. This result applies in particular to rational
$G$-theory. The main statement, Theorem~\ref{descent}, says that any
proper hypercover between simplicial varieties with proper face maps
induces a weak equivalence of the associated spectra with rational coefficients. We also show, in 
Theorem~\ref{descent_hyperenvelope}, that any
hyperenvelope between simplicial varieties with proper face maps
induces a weak equivalence of the associated spectra with integral coefficients.

The fourth section is a review of the $G$-theory of stacks and
simplicial schemes. In particular we show that $G$-theory may be
extended to a covariant functor on the category of \emph{all}
proper, not necessarily representable, morphisms between
Deligne-Mumford stacks. The next section contains the main results
about weight complexes. After defining correspondences and
homological motives over $S$, we associate a weight complex of
motives $\Gamma_* (\alpha.)$ to an arbitrary arrow $\alpha.$ of
simplicial projective varieties (Theorem~\ref{maps}). We use this
result to define weight complexes for arbitrary simplicial varieties
over $S$ (Theorem~\ref{simplicial})and then to prove Theorem \ref{motstacks} and Corollary
\ref{Euler}.  We then show how this implies the existence
of a virtual Chow motive (with rational coefficients) for every
variety over a base field of arbitrary characteristic.
Finally using a result of O.~Gabber, we
show that given a prime $\ell$ invertible in $S$ one can define
weight complexes for Deligne-Mumford stacks over $S$ with values in the category of homotopy
classes of complexes of $K_0$-motives over $S$ with ${\mathbb Z}_{(\ell)}$-coefficients.
However, we do not know if this latter construction leads to bounded complexes or not.

The last section is devoted to the proof of contravariance properties of weight complexes. We assume that $S$ is the spectrum of a field.
Given two varieties $X$ and $Y$ over $S$ we define a Waldhausen category ${\mathbf C}(X,Y)$
of complexes of sheaves on $X \times Y$ and we prove that it is contravariant in $X$ and covariant in $Y$ (Lemma~\ref{contra}).
When $X$ is regular  and projective the $K$-theory $\mathbf{KC} (X,Y)$ of ${\mathbf C} (X,Y)$ coincides
with the $G$-theory of $X \times Y$ (Proposition~\ref{G}).
We then extend the construction to an arbitrary pair of maps $(\alpha. , \beta.)$ of
simplicial varieties, getting a spectrum $\mathbf{KC} (\alpha. , \beta.)$.
When the simplicial varieties involved are regular and projective,
we define a morphism $\gamma$ from $\pi_0 \, \mathbf{KC} (\alpha. , \beta.)$
to the group of homotopy classes of maps from $\Gamma_* (\alpha.)$ to $\Gamma_* (\beta.)$.
Using this, given two projective varieties $X$ and $Y$ we define a map from $\pi_0 \, \mathbf{KC} (X,Y)$ to
${\rm Hom} (h(X) , h(Y))$ which extends earlier constructions, and in particular,
we attach to any morphism $f : X \to Y$ of finite tor-dimension between varieties,
a morphism of weight complexes $f^* : h(Y) \to h(X)$.
When $f$ is an open immersion, we show that $f^*$ coincide with the map already defined.

We would like to thank the referee for useful comments on the paper.

\noindent {\bf Conventions.}
\begin{itemize}
\item All schemes and stacks will be assumed to be separated.
\item By a variety we will simply mean a scheme which is of finite type over our base $S$. The category of such varieties and proper morphisms between them will be denoted ${\mathcal V}ar_S$.
\item A scheme is said to be integral if it is reduced and irreducible.
\item A simplicial scheme $X.$ over $S$ will be said to be {\it proper} over $S$, if each $X_n$ is proper over $S$.
\item A subsimplicial scheme $U. \subset X.$ will be said to be {\it strongly open} if its complement $X.\setminus U.$ is a closed sub-simplicial scheme. This is equivalent to requiring that $U. = {\rm cosk}^{X.} (U_0)$.
\end{itemize}

\newpage

\section{Stacks}

\subsection{Quotient stacks}

For definitions and terminology relating to stacks see \cite{Laumon-M-B-stacks}.

All our stacks will be Deligne-Mumford stacks of finite type over $S$.

If $\mathfrak{X}$ is a stack, we denote its set of points by $\vert \mathfrak{X} \vert$, which is a finite dimensional noetherian topological space (\cite{Laumon-M-B-stacks}, Chapter 5).  Given $x \in \vert \mathfrak{X} \vert$, it has a residue field ${\mathbf k} (x)$ which is an $S$-field ({\it op. cit.} 11.2). We write $\mathfrak{X}_{(i)}$ for the set of points $x \in \vert \mathfrak{X} \vert$ which are the generic points of $i$-dimensional subsets.

Following \cite{Keele-Mori}, \cite{Conrad} we know that $\vert \mathfrak{X} \vert$ is the set of points of an algebraic space, which we shall also denote $\vert \mathfrak{X} \vert$ and call the coarse space of ${\mathfrak X}$.

\begin{prop} \label{etale-implies-galois}
{\rm (\cite{Laumon-M-B-stacks} thm. 6.1.)} If $\mathfrak{X}$ has a finite \'etale cover by a variety, then there exists a Galois cover by a variety $\pi : V \to \mathfrak{X}$. \emph{I.e.}, there is a finite group $G$ acting on $V$, and $\mathfrak{X} = [V/G]$ is the quotient stack for the action.
\end{prop}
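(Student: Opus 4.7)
The plan is to construct the Galois closure of the given cover explicitly, adapting the classical scheme-theoretic construction of the Galois hull of an \'etale cover to the Deligne-Mumford setting (which is the content of the cited result). Let $U \to \mathfrak{X}$ be the finite \'etale cover, of constant degree $n$ (pass to connected components of $\mathfrak{X}$ if necessary). Since $U \to \mathfrak{X}$ is representable, finite, and \'etale from a scheme, all iterated fibre products of $U$ over $\mathfrak{X}$ are schemes, finite \'etale over $U$. First, I would form the $n$-fold fibre product
$$W := U \times_{\mathfrak{X}} \cdots \times_{\mathfrak{X}} U,$$
equipped with its natural $S_n$-action permuting the factors. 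Because $U \to \mathfrak{X}$ is \'etale, the relative diagonal $\Delta : U \to U \times_{\mathfrak{X}} U$ is a clopen immersion; for each pair $i \neq j$, pulling back $\Delta$ along the projection $W \to U \times_{\mathfrak{X}} U$ onto the $i$-th and $j$-th factors yields a clopen ``stacky diagonal'' $\Delta_{ij} \subset W$. Set $V \subset W$ to be the complement of $\bigcup_{i \neq j} \Delta_{ij}$; this is a clopen, $S_n$-invariant subscheme, still finite \'etale over $\mathfrak{X}$.

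Next I would verify that $V \to \mathfrak{X}$ is surjective and that the $S_n$-action exhibits $\mathfrak{X}$ as the quotient stack $[V/S_n]$. Both assertions are \'etale-local on $\mathfrak{X}$: after base change along an \'etale cover trivializing $U$, the cover becomes a disjoint union of $n$ sections, $V$ identifies with the locus of bijective $n$-tuples of such sections, and the resulting $S_n$-action gives the required quotient-stack presentation. One may then take $G := S_n$.

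The main subtlety is that when $\mathfrak{X}$ has nontrivial inertia, the $S_n$-action on $V$ need not be free; its stabilizers encode the inertia groups of $\mathfrak{X}$, so that $[V/S_n]$ recovers $\mathfrak{X}$ with its full stack structure rather than just its coarse space. One must therefore be careful to remove the \emph{stacky} diagonals (the pullbacks of $\Delta : U \to U \times_{\mathfrak{X}} U$), not merely the scheme-theoretic diagonals in $U^n$. For instance, with $\mathfrak{X} = B\mu_n$ and $U = \mathrm{Spec}(k)$, removing the scheme-theoretic diagonals would give $V = \emptyset$, whereas the stacky construction produces $V$ with $(n-1)!$ geometric points carrying a transitive $S_n$-action whose point stabilizer is cyclic of order $n$, correctly yielding $[V/S_n] = B\mu_n$.
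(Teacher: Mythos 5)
The paper does not give a proof of this proposition: it is recalled with a citation to Laumon--Moret-Bailly, Th\'eor\`eme 6.1, so there is no argument in the paper against which to compare yours. That said, your explicit Galois-hull construction is correct and supplies a self-contained proof of the cited result. The key points hold up: since $U \to \mathfrak{X}$ is representable, finite and \'etale from a scheme, all iterated fibre products $U \times_{\mathfrak{X}} \cdots \times_{\mathfrak{X}} U$ are schemes finite \'etale over $\mathfrak{X}$; \'etaleness plus separatedness makes $\Delta : U \to U \times_{\mathfrak{X}} U$ a clopen immersion, so $V$ is a clopen $S_n$-invariant subscheme of $W$ and hence a variety finite \'etale over $\mathfrak{X}$; and working \'etale-locally (for instance after base change to $V$ itself, over which $U$ splits into $n$ disjoint sections) one sees that $V \to \mathfrak{X}$ is an $S_n$-torsor, whence $[V/S_n] \simeq \mathfrak{X}$. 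You are right to emphasize that one must excise the pullbacks of the stacky diagonal in $U \times_{\mathfrak{X}} U$ rather than the scheme-theoretic diagonal of $U$ over $S$; the $B\mu_n$ computation illustrates this cleanly (with $W \simeq \mu_n^{n-1}$, $V$ the $(n-1)!$-point locus where the $\theta$'s are pairwise distinct and nontrivial). The only small remark: your $V$ need not be connected, and $S_n$ need not act through a quotient for which the cover is ``Galois'' in the classical sense; but since the proposition here defines ``Galois cover'' just as $\mathfrak{X} = [V/G]$, that is harmless, and a reader wanting a connected $V$ can pass to a connected component and replace $S_n$ by its stabilizer.
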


\begin{defn}
We shall refer to stacks of the form $[V/G]$ as \emph{quotient stacks}.
\end{defn}

Putting together \ref{etale-implies-galois} and \cite{Laumon-M-B-stacks}, corollaire 6.6.1, we get:

\begin{prop}
\label{Proposition113}
Let $\mathfrak{X}$  be a Deligne-Mumford stack.  Then there is a non empty open substack $\mathfrak{U}\subset \mathfrak{X}$ which is a quotient stack, and we can choose $\mathfrak{U}$ so that
$\mathfrak{U} \simeq [U/G]$ with $U$ quasi-projective over $S$.
\end{prop}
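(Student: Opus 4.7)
The plan is to combine Proposition~\ref{etale-implies-galois} with a density statement for \'etale covers from \cite{Laumon-M-B-stacks}, and then to shrink further to enforce quasi-projectivity of the cover.

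First, I would invoke \cite{Laumon-M-B-stacks}, corollaire 6.6.1, to obtain a non-empty open substack $\mathfrak{U}_0 \subset \mathfrak{X}$ which admits a finite \'etale cover by a scheme $W$. Since $\mathfrak{X}$ is of finite type over $S$, $W$ is a variety. Proposition~\ref{etale-implies-galois} then yields a finite group $G$ and a variety $V$ with an action of $G$ such that $\mathfrak{U}_0 \simeq [V/G]$, where $V \to \mathfrak{U}_0$ is a Galois \'etale cover. This already establishes the first assertion of the proposition.

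Next, to force the quasi-projectivity of the cover, I would descend to the coarse space. By \cite{Keele-Mori}, the coarse space $|\mathfrak{U}_0| \simeq V/G$ exists as a separated algebraic space of finite type over $S$, and the canonical morphism $\pi \colon V \to V/G$ is finite. Every algebraic space of finite type over the noetherian scheme $S$ contains a dense open subscheme, and any such subscheme admits a non-empty affine open. Choose an affine open $U \subset V/G$ arising in this way, and set $V' := \pi^{-1}(U)$. Since $\pi$ is finite and $U$ is affine, $V'$ is itself affine; since $V' \to S$ is of finite type and $S$ is affine in the cases at hand, $V'$ is then quasi-projective over $S$. By construction $V'$ is $G$-stable, so the open substack $\mathfrak{U} := [V'/G] \subset \mathfrak{U}_0 \subset \mathfrak{X}$ is of the required form, and we are done.

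The main obstacle is the second step: starting from an arbitrary variety $V$ with $G$-action (which need not be quasi-projective, nor even admit a $G$-stable affine open in general), one must produce a non-empty $G$-stable quasi-projective open subset. Descending to the coarse space resolves this uniformly, because $V \to V/G$ is automatically finite and hence pulls affineness back; the only fact one needs about algebraic spaces beyond their existence is that they contain a dense open subscheme.
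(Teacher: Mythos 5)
Your proposal is correct and follows the same route the paper intends: the paper gives no written argument beyond ``putting together Proposition~\ref{etale-implies-galois} and \cite{Laumon-M-B-stacks}, corollaire 6.6.1'', i.e.\ exactly your first paragraph, and your second paragraph supplies the quasi-projectivity detail that the paper leaves implicit in the citation. The coarse-space argument is sound: $V\to V/G$ is finite (it is the composite of the finite \'etale map $V\to[V/G]$ with the proper quasi-finite coarse map), so the preimage $V'$ of a non-empty affine open $U$ of the dense open subscheme of $V/G$ is a $G$-stable affine open, and $[V'/G]\subset\mathfrak{X}$ is open. The only blemish is the parenthetical ``since $S$ is affine in the cases at hand'': the proposition is stated for a general base $S$ (regular, excellent, finite-dimensional, satisfying (C)), not just the affine examples. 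This is harmless, because an affine (indeed quasi-affine) morphism of finite type to a noetherian scheme is quasi-projective --- $\mathcal{O}_{V'}$ is relatively ample (EGA II, 5.1.2 and 5.3.1) --- or, alternatively, you may first shrink $U$ so that it lies over an affine open of $S$ and note that quasi-projectivity over an open subscheme of $S$ implies quasi-projectivity over $S$. With that remark removed or replaced, your argument proves the statement in the stated generality.
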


\begin{lemma} \label{compactify quotient stacks}
Let $\mathfrak{Z} = [Z/G]$ for  $G$ a finite group acting on an $S$-variety $V$. Then there is a $G$-equivariant compactification $Z \subset W$ of $Z$ over $S$, and hence an open immersion
$\mathfrak{Z} \to \mathfrak{Y} = [W/G]$ with $\mathfrak{Y}$ proper over $S$.
\end{lemma}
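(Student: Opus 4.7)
The plan is to forget the $G$-action at first, build an arbitrary compactification of $Z$, and then ``average'' it over $G$ via the standard product construction to make it equivariant. Since $S$ is noetherian (being excellent and finite dimensional) and $Z$ is separated of finite type over $S$ by the conventions of the paper, Nagata's compactification theorem yields an open immersion $j : Z \hookrightarrow \bar Z$ with $\bar Z$ proper over $S$; of course this $\bar Z$ need carry no action of $G$.

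To remedy this, set $Y := \prod_{g \in G} \bar Z$, the fiber product over $S$ of $|G|$ copies of $\bar Z$ indexed by the elements of $G$, endowed with the $G$-action $(h \cdot y)_g := y_{gh}$. Consider the morphism $\psi : Z \to Y$ defined by $\psi(z)_g := j(g \cdot z)$, which is well defined because $G$ acts on $Z$; a direct calculation shows $\psi$ is $G$-equivariant. Setting $U := \prod_{g \in G} Z \subset Y$, the map $\psi$ factors through the open subscheme $U$, and projection onto the factor indexed by the identity of $G$ provides a retraction; thus $\psi : Z \to U$ is a section of a separated morphism, hence a closed immersion. Let $W$ be the scheme-theoretic closure of $\psi(Z)$ in $Y$. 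It is $G$-stable because $\psi(Z)$ is, it is proper over $S$ as a closed subscheme of the proper $S$-scheme $Y$, and because $U$ is open in $Y$ and $\psi(Z)$ is closed in $U$, the intersection $W \cap U$ equals the closure of $\psi(Z)$ in $U$, which is $\psi(Z)$ itself. Hence $\psi(Z)$ is open in $W$, exhibiting the desired $G$-equivariant open immersion $Z \hookrightarrow W$ into a $G$-equivariant compactification.

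Passing to quotient stacks, the induced morphism $[Z/G] \to [W/G] =: \mathfrak{Y}$ is an open immersion because this property can be checked after pullback along the atlas $W \to [W/G]$, where it recovers the open immersion $Z \hookrightarrow W$; and $\mathfrak{Y}$ is proper over $S$ because properness descends along the surjective finite morphism $W \to [W/G]$. The only genuine input is Nagata's compactification theorem; everything else is a standard averaging argument, so I do not anticipate a serious obstacle.
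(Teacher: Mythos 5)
Your proof is correct and follows essentially the same route as the paper's: apply Nagata to get a non‑equivariant compactification $\bar Z$, map $Z$ equivariantly into the $|G|$‑fold fiber product $\bar Z^G_S$ via the twisted diagonal, let $G$ permute factors, and take the closure. The paper states this more tersely (it does not spell out why $Z\to W$ is an open immersion), whereas you supply that verification via the closed‑immersion‑into‑$U$ argument and the observation $W\cap U=\psi(Z)$; that is a helpful fleshing‑out of the same argument, not a different approach.
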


\begin{proof} It will be sufficient to find a $G$-equivariant compactification of $Z$.  Recall the following standard argument. Let $i : Z \to \bar{Z}$ be any compactification of $Z$. (This exists by Nagata's theorem.) Consider the $G$-fold fiber product $\bar{Z}^G_S$, which is proper over $S$.  There is a natural morphism of schemes $\eta : Z \to \bar{Z}^G_S$, the ``$g$-th'' component, for $g \in G$, of which is equal to $i \cdot \rho_g : Z \to \bar{Z}$, where $\rho_g : Z \to Z$ is the action of $g$. Observe that  $G$ acts on $\bar{Z}^G_S$ by permuting the factors in the product, and, by construction, $\eta$ is $G$-equivariant. Hence the Zariski closure $W$ of $Z$ in $\bar{Z}^G_S$ is a $G$-equivariant compactification of $Z$. \end{proof}

\begin{lemma}
A map $f : X \to Y$ of quotient stacks is proper if and only if there are finite \'etale covers $U \to X$ and $V \to Y$ by varieties, and a map $\tilde{f} : U \to V$ which is proper, such that the diagram
$$
\begin{CD}
U @>>> V \\
@VVV @VVV\\
X @>>> Y
\end{CD}
$$
commutes.
\end{lemma}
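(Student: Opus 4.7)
The plan is to verify each direction separately, relying on two standard facts: finite étale morphisms are proper and surjective, and properness of a morphism of Deligne-Mumford stacks descends along a proper surjective map (in particular along a finite étale cover).

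For the ``if'' direction, suppose $\tilde f : U \to V$ is proper and that $p_X : U \to X$, $p_Y : V \to Y$ are finite étale covers fitting into the commutative square. The map $p_Y$ is proper, so $p_Y \circ \tilde f : U \to Y$ is proper; by commutativity this equals $f \circ p_X$. Since $p_X$ is a finite étale cover, it is proper and surjective, and so properness of $f$ follows from the descent result just cited.

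For the ``only if'' direction, I would start from the quotient presentations $X = [U_0/G]$ and $Y = [V_0/H]$, whose atlas morphisms $U_0 \to X$ and $V_0 \to Y$ are automatically finite étale covers by varieties. I would then set $V := V_0$ and $U := U_0 \times_Y V_0$, the fibre product being taken in the $2$-category of Deligne-Mumford stacks. Since $V_0 \to Y$ is representable and finite étale, the projection $U \to U_0$ is finite étale (and in particular $U$ is actually a variety), and hence so is the composition $p_X : U \to U_0 \to X$. For the other projection $\tilde f : U \to V_0$: it is the base change of $U_0 \to Y$ along $V_0 \to Y$. But $U_0 \to Y$ factors as the finite étale map $U_0 \to X$ (proper) followed by $f$ (proper by hypothesis), hence is proper. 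Since properness is stable under base change, $\tilde f$ is proper. The required commutativity is the $2$-commutativity of the fibre-product square.

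The one subtle point I expect is the ``descent of properness'' step in the ``if'' direction: one has to know that if $q$ is proper surjective and $f \circ q$ is proper, then $f$ is proper. For schemes this is a standard EGA-style argument, and it extends to Deligne-Mumford stacks through their atlases; everything else is bookkeeping with the stability properties of finite étale covers.
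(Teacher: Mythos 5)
Your proof is correct and matches the paper's approach very closely. The ``only if'' construction is identical to the paper's: the paper also takes finite \'etale covers $W \to X$ and $V \to Y$ and sets $U := V \times_Y W$, which is exactly your $U_0 \times_Y V_0$ up to relabeling, and then observes that the projection $U \to V$ is the base change of a proper map. The only difference is in how the two directions are organized. The paper proves both at once by a chain of equivalences: $f$ proper $\iff f_V : V\times_Y X \to V$ proper (properness is \'etale-local on the target) $\iff \tilde f = f_V \circ g$ proper (where $g : U \to V \times_Y X$ is finite \'etale surjective, so one can both compose with $g$ and descend along it). You instead handle the ``if'' direction with a single invocation of the fact that properness descends along a proper surjective precomposition ($f \circ q$ proper and $q$ proper surjective $\Rightarrow$ $f$ proper). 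The two descent principles are essentially equivalent in this setting, and both routes are standard and valid; the paper's version has the minor advantage that it avoids needing the general ``proper surjective cancellation'' statement by staying entirely within the \'etale world, while yours is slightly more modular and arguably cleaner to read.
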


\begin{proof} Let $V \to Y$ be any finite \'etale cover of $Y$ by a variety. Since $f$ being proper is local in the \'etale topology of $Y$, $f$ is proper if and only if $f_V : V \times_Y X \to V$ is proper. Now let
$W \to X$ be a finite \'etale cover of $X$ by a variety. Then the induced map $g : U := V \times_Y W \to V\times_Y X$ is finite, \'etale, and surjective, and so $f_V$ is proper if and only if the composition $\tilde{f}= f_V \circ g : U \to V$ is proper. \hfil $\Box$
\end{proof}

\begin{defn}
Recall that a map $f : X \to Y$ of schemes is \emph{radicial} if equivalently:
\begin{itemize}
\item For every field $F$, the induced map $X(F) \to Y(F)$ is injective.
\item $f$ is injective as a map of schemes, and for every point $x \in X$, the field extension ${\mathbf k} (f(x))\subset {\mathbf k} (x)$ is purely inseparable.
\end{itemize}

Following {\rm \cite{Laumon-M-B-stacks} 3.10}, we say that a representable morphism $f : \mathfrak{X} \to \mathfrak{Y}$ between Deligne-Mumford stacks is radicial if there is an \'{e}tale cover $p : U \to
\mathfrak{Y} $ with $U$ a scheme, such that $U \times_{\mathfrak{Y}} \mathfrak{X}$ is a scheme, and
$U \times_{\mathfrak{Y}} {\mathfrak X}\to U$ is radicial.
\end{defn}

\subsection{Resolution of singularities}

\begin{thm}
\label{proper} If ${\mathfrak X}$ is a Deligne-Mumford stack of finite type over $S$, there is a proper surjective morphism $p : X \to {\mathfrak X}$ with $X$ a regular variety over $S$.
\end{thm}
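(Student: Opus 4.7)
My plan is to proceed by noetherian induction on $d := \dim|\mathfrak{X}|$; the base case $\mathfrak{X} = \emptyset$ is trivial. For the inductive step, I will first use Proposition~\ref{Proposition113}, applied repeatedly to successive closed complements of the same dimension, to exhibit an open substack $\mathfrak{U} \subset \mathfrak{X}$ that is a finite disjoint union $\bigsqcup_i [U_i / G_i]$ of quotient substacks with the $U_i$ quasi-projective over $S$, and that meets every generic point of $\mathfrak{X}$. Such density forces the closed complement $\mathfrak{Z} := \mathfrak{X} \setminus \mathfrak{U}$ to satisfy $\dim \mathfrak{Z} < d$; the inductive hypothesis will then supply a proper surjective map $q: Z \to \mathfrak{Z}$ from a regular variety $Z$, and composing with the closed immersion $\mathfrak{Z} \hookrightarrow \mathfrak{X}$ gives a proper morphism $Z \to \mathfrak{X}$.

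To cover $\mathfrak{U}$ by a regular variety that maps properly to $\mathfrak{X}$, I will use the following construction. Set $U := \bigsqcup_i U_i$. The canonical morphism $\pi: U \to \mathfrak{X}$, assembled from the finite \'etale Galois covers $U_i \to [U_i/G_i]$ followed by the locally closed immersions $[U_i/G_i] \hookrightarrow \mathfrak{X}$, is representable, quasi-finite, and separated. By Zariski's Main Theorem for representable quasi-finite morphisms into noetherian DM stacks (as in Laumon--Moret-Bailly), $\pi$ will factor as an open immersion $U \hookrightarrow \bar{U}$ followed by a finite morphism $\bar\pi: \bar U \to \mathfrak{X}$; the space $\bar U$ may only be an algebraic space a priori, but Chow's lemma lets me replace it by a scheme admitting a proper surjection to it. Condition~(C), in its trivial-group specialization, then applies de Jong's alteration theorem to $\bar U$, yielding a proper surjection $\tilde U \to \bar U$ with $\tilde U$ a regular variety over $S$. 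The composite $\tilde U \to \bar U \to \mathfrak{X}$ is proper; its image is closed in $\mathfrak X$ and contains the dense open $\mathfrak{U}$, so it must be all of $\mathfrak{X}$. Taking $X := \tilde U \sqcup Z$ with the obvious map produces the desired proper surjection $p: X \to \mathfrak{X}$.

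\textbf{Main obstacle.} The hardest step will be the Zariski Main Theorem / Nagata-type compactification, upgrading $U \to \mathfrak{X}$ from quasi-finite to a finite morphism $\bar U \to \mathfrak{X}$ from a scheme; handling the possibility that the resulting $\bar U$ is an algebraic space rather than a scheme is the subsidiary technical point, resolved by Chow's lemma. Once this is settled, de Jong's alteration theorem (via condition~(C)) and the inductive treatment of $\mathfrak{Z}$ will combine routinely to finish the proof.
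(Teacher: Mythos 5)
Your route is substantially different from the paper's, and more elaborate. The paper applies Chow's lemma for Deligne--Mumford stacks (Laumon--Moret-Bailly, Corollaire 16.6.1) directly to $\mathfrak{X}$, obtaining in one stroke a proper surjective morphism $X_0 \to \mathfrak{X}$ from a projective \emph{scheme}; the problem is thereby reduced at once to finding a regular alteration of the scheme $X_0$. You instead manufacture such a scheme through a chain of intermediaries -- a dense open quotient substack supplied by Proposition~\ref{Proposition113}, Zariski's Main Theorem to compactify $U$ to something finite over $\mathfrak{X}$, then Chow's lemma again to turn the result into a scheme. Nothing here is wrong in principle, though note that the $\bar U$ produced by Zariski's Main Theorem is a priori a Deligne--Mumford stack, not merely an algebraic space, since a finite representable morphism to a stack can still have nontrivial stabilizers; your Chow's-lemma fix covers this case anyway. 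But your own closing observation -- that the image of $\tilde U \to \mathfrak{X}$ is closed and contains the dense open $\mathfrak{U}$, hence equals $\mathfrak{X}$ -- shows the composite is already surjective, so the noetherian induction and the auxiliary cover $Z$ of $\mathfrak{Z}$ are entirely redundant. The whole detour is replaced in the paper by a single citation.

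The genuine gap is in the step you dismiss as routine. Condition (C), in the trivial-group case, says only that every $T$ \emph{finite over $S$} satisfies 5.12.1 of \cite{de-jong-fourier}; your $\bar U$ is not finite over $S$, so (C) tells you nothing about it directly. De Jong's Theorem 5.13 produces regular alterations of an integral variety $X$ only once $X$ is presented as fibered, with geometrically irreducible generic fibre, over a base $T$ satisfying 5.12.1, and the structure morphism $\bar U \to S$ typically fails this irreducibility hypothesis. The missing ingredient is the Stein factorization $\bar U \to T \to S$: then $T$ is finite over $S$ (so (C) applies to $T$), the map $\bar U \to T$ has geometrically connected fibres by EGA~III, Remarque~4.3.4, and after splitting $T$ and $\bar U$ into their irreducible components one lands squarely in the hypotheses of de Jong's theorem. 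This is precisely what the paper's proof does, and it is the crux of the argument rather than a routine afterthought.
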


\begin{proof} By Chow's lemma (\cite{Laumon-M-B-stacks}
Corollaire~16.6.1) there is a proper surjective morphism
$p_0 : X_0 \to {\mathfrak X}$ with $X_0$ a projective variety over $S$. Hence
it suffices  to show that there is a proper surjective map
$\pi : X \to X_0$ such that $X$ is regular. Let $f : X_0 \to S$ be the
structural map, and $f = q \cdot g$ its Stein factorization, with
$g : X_0 \to T$ and $q : T \to S$. Then $q$ is finite, and so by the
hypothesis (C) on $S$, $T$ satisfies 5.12.1 of
\cite{de-jong-fourier}. Furthermore, by Remark~4.3.4 of EGA III
\cite{EGAIII}, $g$ has geometrically connected fibres. Let $T'$ be
the disjoint union of the irreducible components of $T$, and $X'_0$
the pull back of $X_0$ over $T'$. Then $g' : X'_0 \to T'$ still has
geometrically connected fibres, and so the inverse image by $g'$ of
each component of $T'$ is an irreducible component of $X'_0$. By the
standing assumption on $S$, each component of $T'$ satisfies
\cite{de-jong-fourier} 5.12.1, and hence by \cite{de-jong-fourier}
Theorem~5.13 each component of $X'_0$ admits a non-singular
alteration, and we set $X$ equal to the disjoint union of these.
\end{proof}

We also need resolution of singularities for quotient stacks:

\begin{thm}
\label{quotient} Let ${\mathfrak X} = [X/G]$ be a reduced quotient stack of finite type over $S$. Then there is a family of proper morphisms of quotient stacks
$$
p_i : [Y_i / H_i] \to {\mathfrak X} \, ,
$$
where each $Y_i$ is regular and integral, and a dense open substack ${\mathfrak U} \subset {\mathfrak X}$ such that, if
$$
p : \underset{i}{\amalg} \ [Y_i / H_i] \to {\mathfrak X}
$$
is the disjoint union of the $p_i$'s, the induced map
$$
p^{-1} ({\mathfrak U}) \to {\mathfrak U}
$$
is representable and radicial.
\end{thm}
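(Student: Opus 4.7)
The plan is to reduce by irreducible-component decomposition to applying de Jong's equivariant Galois alteration theorem (\cite{de-jong-fourier}, Thm.~5.13) orbit-by-orbit, in the same style as the proof of Theorem~\ref{proper} just above.

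\textbf{Step 1 (Component decomposition).} Since $X$ is reduced, let $X_1,\dots,X_n$ be its irreducible components. The $G$-action permutes them; I pick an orbit representative $X_{j_i}$ in each orbit and set $H_i:={\rm Stab}_G(X_{j_i})\subset G$. Then each $X_{j_i}$ is an integral $H_i$-variety, and it suffices to construct, for each $i$, a proper morphism of quotient stacks $p_i:[Y_i/\tilde H_i]\to [X_{j_i}/H_i]$ with $Y_i$ regular integral, together with a common dense open over which each $p_i$ becomes representable and radicial; composing with the natural $[X_{j_i}/H_i]\to[X/G]$ then yields the $p_i$ of the theorem.

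\textbf{Step 2 (Meeting 5.12.1).} As in Theorem~\ref{proper}, I Stein-factor the structural map $X_{j_i}\to S$ as $X_{j_i}\stackrel{g_i}{\to} T_i\to S$, with $T_i\to S$ finite and $g_i$ having geometrically connected fibres; by functoriality of the Stein factorization, the $H_i$-action on $X_{j_i}$ descends to an $H_i$-action on $T_i$ over $S$. Hypothesis (C) then gives that $(T_i,H_i)$ satisfies 5.12.1 of \cite{de-jong-fourier}, which is precisely the input required to apply de Jong's equivariant alteration theorem to the integral $H_i$-variety $X_{j_i}$ over $T_i$.

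\textbf{Step 3 (Equivariant alteration and assembly).} Applying de Jong's theorem yields a regular integral variety $Y_i$ with an action of a finite group $\tilde H_i$, a homomorphism $\tilde H_i\to H_i$, and a proper, surjective, $\tilde H_i$-equivariant alteration $f_i:Y_i\to X_{j_i}$ whose generic stabilizer structure makes the induced morphism of stacks $[Y_i/\tilde H_i]\to[X_{j_i}/H_i]$ representable and radicial over a dense $H_i$-stable open $V_i\subset X_{j_i}$. Let $U\subset X$ be the $G$-saturation of $\bigsqcup_i\bigl(V_i\setminus\bigcup_{k\ne j_i}X_k\bigr)$; this is a $G$-invariant dense open of $X$, and I set $\mathfrak U:=[U/G]$. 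Over $\mathfrak U$ distinct components do not meet, so $\mathfrak U\simeq\coprod_i[U_i/H_i]$ with $U_i\subset V_i$, and $p:\coprod_i[Y_i/\tilde H_i]\to[X/G]$ restricts over $\mathfrak U$ to a disjoint union of representable radicial morphisms, hence is itself representable and radicial.

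The main obstacle is Step~3: one has to extract from de Jong's Theorem~5.13 not merely the regular alteration but also precise control on the ``generic Galois group'' $\tilde H_i$ and its relationship to $H_i$ so that, after shrinking to a dense open, the generic fibre of $f_i$ becomes a torsor for a subgroup of $H_i$ up to purely inseparable residue extensions. Once this is in hand, representability follows because $\tilde H_i\to H_i$ is (after shrinking) injective on the relevant stabilizer data, and radiciality follows from the purely inseparable nature of the residual extensions at generic points. The bookkeeping with orbits, Stein factorizations and component intersections is routine given Theorem~\ref{proper}.
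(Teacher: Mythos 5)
Your approach is essentially the paper's: reduce by component decomposition to a single integral $G_0$-variety, Stein-factor the structural map so hypothesis~(C) gives 5.12.1 for $(T,G_0)$, then invoke de Jong's Thm.~5.13 to obtain a Galois alteration $(Y,H)\to(X_0,G_0)$ with $Y$ regular. The only organizational difference is that the paper first decomposes the stack ${\mathfrak X}=\bigcup_i{\mathfrak X}_i$ into irreducible components and then observes that $G$ acts transitively on the components of $X$ in the irreducible case, whereas you decompose $X$ directly into $G$-orbits of components; these are equivalent.

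The obstacle you flag in Step 3 is real but is closed by the paper in a few lines, and is exactly what you guess: de Jong's Thm.~5.13 already records that the induced extension of function fields $k({\mathfrak Y})/k({\mathfrak X})$ is purely inseparable, and since $Y$ is integral one may normalize the alteration so that $\ker(H\to G_0)$ is precisely the Galois group of $k(Y)/k(X_0)$. With the kernel thus identified, the map $[Y/H]\to[X_0/G_0]$ is representable over a dense open (the kernel acting generically without extra stabilizers once one pulls back along the atlas), and the purely inseparable residue extension gives radiciality there. So your ``bookkeeping'' remark is accurate; you just need to cite these two consequences of de Jong~5.13 explicitly rather than gesture at ``generic stabilizer structure.''
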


\begin{proof} If ${\mathfrak X} = \underset{i}{\cup} \, {\mathfrak X}_i$ is the decomposition of ${\mathfrak X}$ into irreducible components, since the map $\underset{i}{\amalg} \, {\mathfrak X}_i \to {\mathfrak X}$ is an isomorphism on a dense open subset of both source and target,
it suffices to consider the case when ${\mathfrak X}$ is irreducible.
Since ${\mathfrak X}$ is irreducible, $G$ acts transitively on the irreducible components of $X$.

Let $X_0$ be one such component, and $G_0 \subset G$ its stabilizer. Then since the inclusion $i : X_0 \to X$ is $G_0$-equivariant, there is an induced map
$$
f : [X_0 / G_0] \to [X/G]
$$
which is clearly surjective (since the pull back of the \'etale cover $X \to [X/G]$ by $f$ is isomorphic to the disjoint union of the irreducible components of $X$). Furthermore, since the map from the disjoint union of the irreducible components of $X$ to $X$ induces an isomorphism between dense open subsets of source and target, the same is true for $f$.

It suffices, therefore, to show that there is a map ${\mathfrak Y} = [Y/H] \to [X_0 / G_0]$ with the desired properties.

Consider the map $g_0 : X_0 \to S$. This is $G_0$-equivariant, with $G_0$ acting trivially on $S$. Let
$$
X_0 \overset{q}{\longrightarrow} T \overset{\pi}{\longrightarrow} S
$$
 be its Stein factorization, which is again $G_0$-equivariant. Then the morphism $q$ has geometrically irreducible fibres, and $(T,G_0)$, by the standing assumption on $S$, satisfies 5.12.1 of \cite{de-jong-fourier}, hence ({\it op. cit.}, 5.13) there is a Galois alteration $(Y,H) \to (X_0 , G_0)$ with $Y$ regular. Set ${\mathfrak Y} = [Y/H]$. The field extension $k({\mathfrak Y}) / k({\mathfrak X})$ is purely inseparable by ({\it op. cit.}, 5.13). Since $Y$ is integral, we may assume that the kernel of $H \to G_0$ is the Galois group of this extension. Therefore there is a dense open substack ${\mathfrak U} \subset {\mathfrak X}$ such that the map $p^{-1} ({\mathfrak U}) \to {\mathfrak U}$ is representable and radicial. This proves our result. \end{proof}

\section{Hypercovers}

The material in this section is based on expos\'e $\mathrm{V}^{\mathrm{bis}}$  of \cite{SGA4-2}, as well as \cite{Deligne-Hodge-III}.  Throughout this section $\mathcal{C}$ will be a category with finite limits and colimits. In particular ${\mathcal C}$ has a final object.

\subsection{Simplicial objects and coskeleta}

Recall that $\Delta$ is the category of finite non-empty totally ordered sets and order preserving maps.  For $n\in \mathbb{N}$, we write $[n] := \{0< \ldots <n\}$; these objects (and the morphisms between them) form a skeletal subcategory of $\Delta$.
Let $\Delta_{\leq n} \subset  \Delta$ be the full subcategory consisting of objects with cardinality at most $n$.

A simplicial object in $\mathcal{C}$ is a {\it contravariant} functor $X. : \Delta \to \mathcal{C}$.  We write $X_n$ for $X. ([n])$.
The category of simplicial objects in $\mathcal{C}$ will be denoted by ${\Delta^{\mathrm{op}}} (\mathcal{C})$.

An $n$-truncated simplicial object is a contravariant functor $X. : \Delta_{\leq n} \to \mathcal{C}$.  The corresponding category will be denoted ${\Delta_{\leq n}^{\mathrm{op}}} (\mathcal{C})$.

There is an obvious restriction functor
$$
\mathrm{sk}_n : {\Delta^{\mathrm{op}}} (\mathcal{C}) \to {\Delta_{\leq n}^{\mathrm{op}}} (\mathcal{C}) \; .
$$
This functor has two adjoints.  Since $\mathcal{C}$ has finite inverse limits, ${\rm sk}_n$ has a right adjoint
$$
{\rm cosk}_n : {\Delta_{\leq n}^{\mathrm{op}}} (\mathcal{C}) \to {\Delta^{\mathrm{op}}} (\mathcal{C}) \; .
$$
Note that ${\rm sk}_n \circ {\rm cosk}_n = {\rm Id}$.
In addition, ${\rm sk}_n$ has a left adjoint which we denote $\iota_n$, and which is a fully faithful functor.

We shall write ${\rm Cosk}_n$ for the composition ${\rm cosk}_n \circ {\rm sk}_n$, and ${\rm Sk}_n$ for the composition $\iota_n \circ {\rm sk}_n$.
Note that $({\rm Sk}_n , {\rm Cosk}_n)$ are an adjoint pair.

If $Y.$ is a fixed object in ${\Delta^{\mathrm{op}}} (\mathcal{C})$, we can consider the category
${\Delta^{\mathrm{op}}} (\mathcal{C})_{Y.}$ of simplicial objects over $Y.$, as well as the category of $n$-truncated objects ${\Delta_{\leq n}^{\mathrm{op}}} (\mathcal{C})_{Y.}$ over ${\rm sk}_n(Y.)$.

There is an obvious restriction functor
$$
{\rm sk}^{Y.}_n : {\Delta^{\mathrm{op}}} (\mathcal{C})_{Y.} \to {\Delta_{\leq n}^{\mathrm{op}}} (\mathcal{C})_{Y.}
$$
which has a right adjoint ${\rm cosk}^{Y.}_n$, given by
$$
{\rm cosk}_n^{Y.} (X.) = {\rm cosk}_n (X.) \times_{{\rm Cosk}_n {(Y.)}} Y.
$$
We write ${\rm Cosk}^{Y.}_n$ for the composition ${\rm cosk}^{Y.}_n \circ {\rm sk}^{Y.}_n$.  We also set ${\rm Cosk}^{Y.}_{-1} := Y.$

The following is straightforward, so we omit the proof.

\begin{lemma}\label{props-cosk}
Let $f : X. \to Y.$ be a map of simplicial objects in ${\mathcal C}$.
\begin{itemize}
\item[{\rm 1)}] If $m \geq n \geq 0$, then the natural map
$$
{\rm Cosk}_n^{Y.} (X.). \to {\rm Cosk}_m^{Y.} ({\rm Cosk}_{n.}^{Y.} (X.).).
$$
is an isomorphism.
\item[{\rm 2)}] If $n \geq p \geq 0$, the natural map
$$
X_p \to {\rm Cosk}_n^{Y.} (X.)_p
$$
is an isomorphism.
\item[{\rm 3)}] If $n > p \geq 0$, then
$$
{\rm Cosk}_n^{Y.} (X.)_p \to {\rm Cos}_{n-1}^{Y.} (X.)_p
$$
is an isomorphism.
\end{itemize}
\end{lemma}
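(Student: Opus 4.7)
The plan is to reduce all three assertions to their absolute analogues via the formula
$$
{\rm Cosk}_n^{Y.}(X.) = {\rm Cosk}_n(X.) \times_{{\rm Cosk}_n(Y.)} Y.,
$$
obtained by unfolding the given definition of ${\rm cosk}_n^{Y.}$ and using that ${\rm sk}_n^{Y.}(X.)$ is just the $n$-truncation of $X.$ together with its canonical map to ${\rm sk}_n(Y.)$. Once this formula is in hand, everything becomes a matter of taking fibre products and applying the absolute identity ${\rm sk}_n \circ {\rm cosk}_n = {\rm Id}$ noted in the excerpt.

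For Part 2, I would first recall that in the absolute setting the unit map $X_p \to {\rm Cosk}_n(X.)_p$ is an isomorphism whenever $p \leq n$; this is immediate from ${\rm sk}_n \circ {\rm cosk}_n = {\rm Id}$, since on the $p$-th level with $p \leq n$ the functor ${\rm Cosk}_n = {\rm cosk}_n \circ {\rm sk}_n$ only sees the $n$-truncation, which agrees with $X.$ at level $p$. Applying this fact to both $X.$ and $Y.$ and taking the level-$p$ slice of the fibre product yields
$$
{\rm Cosk}_n^{Y.}(X.)_p \;\cong\; X_p \times_{Y_p} Y_p \;\cong\; X_p.
$$
Part 3 then follows at once: for $n > p \geq 0$ one has $n-1 \geq p$, so Part 2 applied with both $n$ and $n-1$ identifies the two sides of the natural map with $X_p$, and the map itself is the identity on $X_p$.

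The main content lies in Part 1. The key absolute ingredient is that ${\rm Cosk}_m \circ {\rm Cosk}_n = {\rm Cosk}_n$ whenever $m \geq n$. This follows by writing ${\rm sk}_n = {\rm sk}_n^m \circ {\rm sk}_m$, where ${\rm sk}_n^m : \Delta_{\leq m}^{\mathrm{op}}(\mathcal{C}) \to \Delta_{\leq n}^{\mathrm{op}}(\mathcal{C})$ is the restriction between truncated categories, and correspondingly ${\rm cosk}_n = {\rm cosk}_m \circ {\rm cosk}_n^m$ by adjointness; combined with ${\rm sk}_m \circ {\rm cosk}_m = {\rm Id}$, one unwinds ${\rm cosk}_m {\rm sk}_m {\rm cosk}_n {\rm sk}_n = {\rm cosk}_m {\rm cosk}_n^m {\rm sk}_n^m {\rm sk}_m \cdot ({\rm something formal})$ to obtain the claim. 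To transfer this to the relative setting, I would use that ${\rm Cosk}_m$ is a composition of right adjoints (namely ${\rm cosk}_m$, right adjoint to ${\rm sk}_m$, and ${\rm sk}_m$, right adjoint to $\iota_m$), hence preserves fibre products. Applying it to the product formula for ${\rm Cosk}_n^{Y.}(X.)$ and substituting ${\rm Cosk}_m {\rm Cosk}_n = {\rm Cosk}_n$ gives
$$
{\rm Cosk}_m^{Y.}({\rm Cosk}_n^{Y.}(X.)) \;=\; {\rm Cosk}_n(X.) \times_{{\rm Cosk}_n(Y.)} {\rm Cosk}_m(Y.) \times_{{\rm Cosk}_m(Y.)} Y. \;=\; {\rm Cosk}_n^{Y.}(X.).
$$

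No step involves any genuine difficulty; the whole lemma is formal manipulation of the ${\rm sk}_n \dashv {\rm cosk}_n$ adjunction and of fibre products, which is no doubt why the authors omit the proof. The only minor care needed is in bookkeeping between absolute and relative coskeleta and in verifying that the canonical comparison maps identified in the argument are the ones asserted by the statement.
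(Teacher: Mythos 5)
Your proof is correct. The paper itself gives no proof (it just states that the lemma is straightforward and omits the argument), so there is nothing to compare approaches against; your reduction to the absolute case via ${\rm Cosk}_n^{Y.}(X.) = {\rm Cosk}_n(X.) \times_{{\rm Cosk}_n(Y.)} Y.$, together with the facts that ${\rm Cosk}_m$ preserves fibre products (being a composite of right adjoints) and ${\rm Cosk}_m {\rm Cosk}_n = {\rm Cosk}_n$ for $m \geq n$, is exactly the routine argument the authors had in mind.
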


If $A$ is a finite set, and $X \in \mathcal{C}$ is an object,  we write $X \times A$ for $\sqcup_{a \in A} \, X$. If $A.$ is the simplicial set associated to a finite simplicial complex, and $X. \in \Delta^{\mathrm{op}} \mathcal{C}$ we define $X. \times A.$ by $(X. \times A.)_k := X_k \times A_k$.

Two maps $f_0 : X. \to Y.$ and $f_1 : X. \to Y.$ are said to be homotopic if there is a map
$$
h. : X. \times \Delta [1] \to Y.
$$
such that $h. |_{X \times \{i\}} = f_i$.

\begin{lemma}
\label{Lemme212}
Let $X$ and $Y$ be two objects of $\mathcal{C}$.  If $f_i : X \to Y$ for $i = 0,1$ are two arbitrary maps, then the maps ${\rm cosk} (f_0) : {\rm cosk}_0(X). \to {\rm cosk}_0(Y).$  and ${\rm cosk} (f_1) : {\rm cosk}_0(X). \to {\rm cosk}_0 (Y).$ are homotopic.
\end{lemma}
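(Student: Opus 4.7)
The plan is to construct the homotopy using the fundamental adjunction $(\mathrm{sk}_0, \mathrm{cosk}_0)$ that gives these simplicial objects their universal property. Since an object of $\mathcal{C}$ is the same as a $0$-truncated simplicial object, the adjunction specializes to
$$
\mathrm{Hom}_{\Delta^{\mathrm{op}}\mathcal{C}}(Z., \mathrm{cosk}_0(Y)) \;\cong\; \mathrm{Hom}_{\mathcal{C}}(Z_0, Y)
$$
for any simplicial object $Z.$. So to produce a simplicial map $h : \mathrm{cosk}_0(X). \times \Delta[1] \to \mathrm{cosk}_0(Y).$ it suffices to specify its value in degree $0$.

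First I compute the degree $0$ part of the source. By the convention in the paper, $(X.\times A.)_k = X_k \times A_k$ where $X \times A = \sqcup_{a \in A} X$ for a finite set $A$. Hence
$$
(\mathrm{cosk}_0(X). \times \Delta[1])_0 \;=\; X \times \Delta[1]_0 \;=\; X \sqcup X,
$$
since $\Delta[1]_0 = \mathrm{Hom}_\Delta([0],[1])$ has two elements indexed by $0$ and $1$. Define $\widetilde h : X \sqcup X \to Y$ to be $(f_0, f_1)$, i.e.\ $f_i$ on the copy indexed by $i$. By the adjunction, $\widetilde h$ extends uniquely to a simplicial map $h : \mathrm{cosk}_0(X). \times \Delta[1] \to \mathrm{cosk}_0(Y).$

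It remains to check that $h$ restricts to $\mathrm{cosk}(f_i)$ on $\mathrm{cosk}_0(X). \times \{i\}$ for $i = 0, 1$. Write $\iota_i : \mathrm{cosk}_0(X). \hookrightarrow \mathrm{cosk}_0(X).\times \Delta[1]$ for the inclusion induced by $[0]\to[1]$, $0\mapsto i$; on level $0$ this is the inclusion of the $i$-th summand $X \hookrightarrow X\sqcup X$. Both $h\circ \iota_i$ and $\mathrm{cosk}(f_i)$ are simplicial maps $\mathrm{cosk}_0(X). \to \mathrm{cosk}_0(Y).$, so by the uniqueness clause of the adjunction they coincide as soon as they agree in degree $0$. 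In degree $0$, $h\circ \iota_i$ is the composite $X \hookrightarrow X\sqcup X \xrightarrow{(f_0,f_1)} Y$, which is $f_i$; and $\mathrm{cosk}(f_i)_0 = f_i$ since $\mathrm{sk}_0\circ\mathrm{cosk}_0 = \mathrm{Id}$. This gives the desired homotopy.

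There is no real obstacle here: the content of the lemma is entirely captured by the adjointness between $\mathrm{sk}_0$ and $\mathrm{cosk}_0$, and the only thing to be slightly careful about is identifying the degree $0$ piece of the product $\mathrm{cosk}_0(X).\times \Delta[1]$ correctly as a coproduct $X\sqcup X$, which relies on $\mathcal{C}$ having finite coproducts (a standing hypothesis).
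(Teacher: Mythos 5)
Your proof is correct, and the underlying homotopy is in fact the same one the paper constructs, but you obtain it by a genuinely different method. The paper writes down the map degree by degree, defining $h_p(\phi)_i = f_{\phi(i)} : X \to Y$ for each $\phi : [p] \to [1]$, and then leaves the verification of compatibility with the simplicial identities as a check. You instead exploit the adjunction $\mathrm{sk}_0 \dashv \mathrm{cosk}_0$ (together with $\mathrm{sk}_0 \circ \mathrm{cosk}_0 = \mathrm{Id}$, which the paper records), reducing everything to degree $0$: a map into $\mathrm{cosk}_0(Y)$ is the same as a map out of the degree-$0$ object, and since $(\mathrm{cosk}_0(X). \times \Delta[1])_0 = X \sqcup X$ (using finite coproducts in $\mathcal{C}$), the single map $(f_0, f_1) : X \sqcup X \to Y$ determines the homotopy, and the boundary conditions follow from the uniqueness in the adjunction. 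If you unwind your adjoint map in degree $p$ you recover exactly the paper's formula $f_{\phi(i)}$, so the two constructions agree; the trade-off is that your route avoids the ``it is straightforward to check'' verification entirely, while the paper's explicit formula is the one that is directly generalized in the subsequent Lemma~\ref{existence-homotopies}, where one can no longer get away with a pure degree-$0$ reduction.
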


\begin{proof} We have to construct maps, for $p\geq 0$:
$$
({\rm cosk}_0(X)_p = X^{p+1})\times  \, \mathrm{Hom}_{\Delta} ([p],[1])  \to {\rm cosk}_0 (Y)_p = Y^{p+1}
$$
which are compatible with the morphisms in $\Delta$. Given $\phi : [p] \to [1]$, define
$$
h_p (\phi) := (h_p(\phi)_0 , \ldots , h_p (\phi)_p) : X^{p+1} \to Y^{p+1}
$$
by, for $i = 0, \ldots , p$,
$$
h_p (\phi)_i = f_{\phi(i)} : X \to Y \, .
$$
It is straightforward to check that this works. \end{proof}

More generally, we have the following result, which extends Lemme~3.0.2.4 of expos\'e V$^\textrm{bis}$ of \cite{SGA4-2}:

\begin{lemma}
\label{existence-homotopies}
Let $S.$ be a fixed simplicial object in $\mathcal{C}$.
Suppose that $n \geq 0$ and that $f_i : X. \to Y.$ for $i = 0,1$ are
two maps in the category ${\Delta^{\mathrm{op}}_{S.}} (\mathcal{C})$ of
simplicial objects over $S.$, such that $(f_0)_p = (f_1)_p$ for $p < n$.
Note that if $n = 0$ this last condition is vacuous.
Then ${\rm Cosk}^{S.}_n (f_0)$ and ${\rm Cosk}^{S.}_n (f_1)$ are homotopic.
\end{lemma}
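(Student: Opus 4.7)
My plan is to generalize the formula of Lemma \ref{Lemme212} and, using the adjunction between ${\rm sk}^{S.}_n$ and ${\rm cosk}^{S.}_n$, to reduce the construction of the homotopy to a definition on $n$-truncated simplicial objects over $S.$. A homotopy
$$h : {\rm Cosk}^{S.}_n(X.) \times \Delta[1] \to {\rm Cosk}^{S.}_n(Y.)$$
in ${\Delta^{\mathrm{op}}_{S.}}(\mathcal{C})$ corresponds under this adjunction to a map
$${\rm sk}_n X. \times {\rm sk}_n \Delta[1] \to {\rm sk}_n Y.$$
of $n$-truncated simplicial objects over ${\rm sk}_n S.$; here I use that ${\rm sk}_n$ commutes with products and that ${\rm sk}_n({\rm Cosk}^{S.}_n(X.)) = {\rm sk}_n X.$ by Lemma \ref{props-cosk} part 2).

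For each $p \leq n$ and each $\phi : [p] \to [1]$ in $\Delta$ I would define the component
$$h_p(\phi) : X_p \to Y_p$$
of $h_p : X_p \times {\rm Hom}_\Delta([p],[1]) \to Y_p$ by $h_p(\phi) = (f_{\phi(p)})_p$. For $p < n$ the hypothesis $(f_0)_p = (f_1)_p$ makes this equal to the common value independently of $\phi$, while at $p = n$ it selects $(f_0)_n$ or $(f_1)_n$ according as $\phi(n) = 0$ or $1$. Restricting $\phi$ to the two constant maps gives back ${\rm sk}_n f_0$ and ${\rm sk}_n f_1$ at the endpoints of $\Delta[1]$, and each $h_p(\phi)$ is over $S_p$ because the $f_i$ are maps over $S.$.

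The substantive verification is simplicial compatibility: for every $\alpha : [q] \to [p]$ in $\Delta$ with $p, q \leq n$ one needs
$$h_q(\phi \circ \alpha) \circ X(\alpha) = Y(\alpha) \circ h_p(\phi).$$
When $p, q < n$ this is immediate from functoriality of $f_0$, and $p = q = n$ forces $\alpha = {\rm id}_{[n]}$. The face-type case $\alpha : [q] \to [n]$ with $q < n$ reduces to the identity $(f_{\phi(n)})_q = (f_0)_q$, valid since $q < n$. I expect the degeneracy-type case $\alpha : [n] \to [p]$ with $p < n$ to be the main obstacle: there the left-hand side is $(f_{\phi(\alpha(n))})_n \circ X(\alpha)$, while the right-hand side reduces to $(f_0)_n \circ X(\alpha)$. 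The crucial fact is that on the image of $X(\alpha) : X_p \to X_n$ the maps $(f_0)_n$ and $(f_1)_n$ agree, since
$$(f_0)_n \circ X(\alpha) = Y(\alpha) \circ (f_0)_p = Y(\alpha) \circ (f_1)_p = (f_1)_n \circ X(\alpha).$$
Applying ${\rm cosk}^{S.}_n$ to the resulting $n$-truncated map then yields the desired homotopy.
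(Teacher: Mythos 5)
Your proof is correct and follows essentially the same route as the paper: reduce via the adjunction and Lemma~\ref{props-cosk} to specifying a map of $n$-truncated objects $h_{p,\phi}: X_p \to Y_p$ for $p \leq n$, $\phi \in \Delta[1]_p$, forced to be the common value for $p<n$, and then check simplicial compatibility. The one difference is cosmetic: the paper observes that for $p=n$ and $\phi$ non-constant the choice between $(f_0)_n$ and $(f_1)_n$ can be made arbitrarily, whereas you pin it down by the explicit rule $h_p(\phi)=(f_{\phi(p)})_p$ (a direct generalization of Lemma~\ref{Lemme212}) and then spell out the verification the paper leaves to the reader. One small slip: $p=q=n$ does not force $\alpha=\mathrm{id}_{[n]}$ (there are non-identity order-preserving maps $[n]\to[n]$), but this is harmless since any such non-identity $\alpha$ factors through some $[m]$ with $m<n$, so compatibility there follows from the cases you did check; alternatively, it suffices to verify naturality only on elementary faces and degeneracies, which never have $p=q$.
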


\begin{proof} The two maps ${\rm Cosk}^{S.}_n (f_0).$ and ${\rm Cosk}^{S.}_n (f_1).$ are
homotopic if there is a map
$$
h. : {\rm Cosk}^{S.}_n (X). \times \Delta [1] \to {\rm Cosk}^{S.}_n (Y).
$$
such that $h. |_{{\rm Cosk}^{S.}_n (X) \times \{i\}} = f_i$. Since
$$
{\rm Cosk}^{S.}_n (X). = {\rm Cosk}_n (X). \times_{{\rm Cosk}_n(S).} S.
$$
it is enough to find a homotopy
$$
h. : {\rm Cosk}_n (X). \times \Delta [1] \to {\rm Cosk}_n (Y).
$$
which is a map of simplicial objects over ${\rm Cosk}_n (S).$

To give the map $h.$ is equivalent to giving a map
$$
{\rm sk}_n (h.) : {\rm sk}_n (X.) \times {\rm sk}_n (\Delta [1]) \to {\rm sk}_n (Y.) \, ,
$$
{\it i.e.} to giving maps, for $k \leq n$,
$$
h_{k,\phi} : X_k \to Y_k
$$
for $\phi \in \Delta [1]_k = \mathrm{Hom}_\Delta ([k],[1])$ compatible with faces and degeneracies, such that $h_{k,\partial_i} = (f_i)_k$ for $\partial_i : [k] \to [1] = \{0,1\}$ the constant map with value $i$.

Since $(f_0)_k = (f_1)_k$ for $k < n$, we can set $h_{k,\phi} = (f_0)_k = (f_1)_k$ for $k < n$.  For $k = n$, and $\phi \neq \partial_0, \partial_1$, we can choose $h_{k,\phi} = f_0$ or $f_1$ arbitrarily.  It is now straightforward to check that such a choice defines a homotopy, and that since $f_0$ and $f_1$ are maps of simplicial objects over $S.$, $h.$ is a map of objects over ${\rm Cosk}_n(S)$. \end{proof}

If ${\mathcal C}$ is a category with finite products, if $A$ is a finite set, and $X$ is an object of ${\mathcal C}$ we define ${\rm Hom} (A,X) := X^A$. Notice that this is a functor
$$
\mbox{Finite Sets}^{\rm op} \times {\mathcal C} \to {\mathcal C} \, .
$$
Let $n \geq 0$. Given an $n$-truncated simplicial finite set $A.$ and $n$-truncated simplicial object
$X.$ in ${\mathcal C}$, ${\rm Hom} (A. , X.)$ defines a
functor $(\Delta_{\leq n})^{\rm op} \times (\Delta_{\leq n}^{\rm op})^{\rm op} \to {\mathcal C}$ and we define
$$
{\rm Hom}_{\Delta_{\leq n}} (A. , X.) := \int_{[p] \in \Delta_{\leq n}} {\rm Hom} (A_p , X_p) \in {\mathcal C} \, .
$$
Here $\int_{[p] \in \Delta_{\leq n}}$ denotes the end as in \cite{MacLane-Categories-working-math}.
We say that a simplicial set $A.$ is {\it finite} if all the $A_i$ are finite, and there is an
$n \geq 0$ such that $A. = {\rm Sk}_n (A.)$. Notice that if $K$ is a finite simplicial complex, the associated simplicial set is finite.

If $A.$ is a finite simplicial set with $A. = {\rm Sk}_n \, A.$, and $X.$ is a simplicial object in ${\mathcal C}$, we define:
$$
{\rm Hom}_{\Delta} (A. , X.) := {\rm Hom}_{\Delta_{\leq n}} ({\rm Sk}_n \, A. , {\rm Sk}_n \, X.) \, .
$$
Note that if ${\mathcal C}$ had infinite products so that we could define ${\rm Hom}_{\Delta} (A. , X.)$ directly, then we would have, since $A. = {\rm Sk}_n \, A. = \iota_n \circ {\rm sk}_n \, A$:
\begin{align}
{\rm Hom}_{\Delta} (A. , X.) &= &{\rm Hom}_{\Delta} (\iota_n \cdot {\rm sk}_n \, A , X.) \nonumber \\
&= &{\rm Hom}_{\Delta_{\leq n}} ({\rm sk}_n \, A , {\rm sk}_n \, X.) \, . \nonumber
\end{align}
It is straightforward to check that this definition does not depend on the choice of $n$, so long as $A. = {\rm Sk}_n \, A$.

\begin{prop}
\label{left_exact}
The functor ${\rm Hom}_{\Delta} : [(\Delta^{\rm op} \, {\rm Sets})_{\rm finite}]^{\rm op} \times \Delta^{\rm op} \, {\mathcal C} \to {\mathcal C}$ is left exact with respect to the first variable, \emph{i.e.}, suppose that $A : I \to (\Delta^{\mathrm{op}} \, {\rm Sets})_{\rm finite}$ is a diagram with $I$ finite, then
$$
\mathrm{Hom}_\Delta (\varinjlim_{i\in I} A(i)., X.) \simeq \varprojlim_{i\in I} \mathrm{Hom}_\Delta(A(i)., X.) \, .
$$
\end{prop}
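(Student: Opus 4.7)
The plan is to reduce to a uniform truncation, then unwind the definition of the end and use that the contravariant functor $A \mapsto X^A$ turns finite colimits into finite limits, and that ends commute with limits in the target variable.

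First, I would choose a single truncation level that works for the whole diagram. Since $I$ is finite and each $A(i).$ is finite, we can pick $n$ large enough that $A(i). = \mathrm{Sk}_n A(i).$ for every $i \in I$. Because $\mathrm{Sk}_n = \iota_n \circ \mathrm{sk}_n$ is a composition of left adjoints (the adjunctions established just before Lemma~\ref{props-cosk}), it commutes with colimits, so
$$\mathrm{Sk}_n\bigl(\varinjlim_{i\in I} A(i).\bigr) = \varinjlim_{i\in I} \mathrm{Sk}_n A(i). = \varinjlim_{i\in I} A(i).$$
In particular the colimit $B. := \varinjlim_i A(i).$ is again a finite simplicial set, and by the definition just above the proposition we may compute $\mathrm{Hom}_\Delta(B.,X.)$ and each $\mathrm{Hom}_\Delta(A(i).,X.)$ as ends over $\Delta_{\leq n}$.

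Next I would observe that colimits in $\Delta^{\mathrm{op}}\mathrm{Sets}$ are computed pointwise, so $B_p = \varinjlim_i A(i)_p$ in $\mathrm{Sets}$ for every $p \leq n$. The bifunctor $\mathrm{Hom}(-,-) = (-)^{(-)}$ on finite sets and objects of $\mathcal{C}$ is contravariant in the first variable and converts finite colimits of sets into finite limits in $\mathcal{C}$ (this uses only that $\mathcal{C}$ has finite products: a colimit of finite sets is built from finite disjoint unions and coequalizers, and each of these is sent to the corresponding finite limit, which exists by hypothesis on $\mathcal{C}$). Hence for each $[p] \in \Delta_{\leq n}$,
$$\mathrm{Hom}(B_p, X_p) = \mathrm{Hom}(\varinjlim_{i\in I} A(i)_p, X_p) \simeq \varprojlim_{i\in I} \mathrm{Hom}(A(i)_p, X_p).$$

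Finally, I would invoke the fact that ends commute with limits in the target. Recall that the end $\int_{[p]\in\Delta_{\leq n}} F_p$ is the equalizer of a pair of maps between finite products indexed by $\Delta_{\leq n}$ (and by its morphisms). Both products and equalizers commute with arbitrary limits in each variable, so
$$\int_{[p]\in\Delta_{\leq n}} \varprojlim_{i\in I} \mathrm{Hom}(A(i)_p, X_p) \simeq \varprojlim_{i\in I} \int_{[p]\in\Delta_{\leq n}} \mathrm{Hom}(A(i)_p, X_p).$$
Combining the three isomorphisms,
$$\mathrm{Hom}_\Delta(B., X.) \simeq \int_{[p]} \varprojlim_{i} \mathrm{Hom}(A(i)_p, X_p) \simeq \varprojlim_{i} \mathrm{Hom}_\Delta(A(i)., X.),$$
which is the desired identity.

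The proof is essentially formal; the only mild obstacle is the first step, namely justifying that a single $n$ serves for the whole diagram (so that the end description is available uniformly) and checking that $\mathrm{Sk}_n$ commutes with the finite colimit. Once this is in place, everything else is a standard interchange of limits argument.
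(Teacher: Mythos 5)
Your proof is correct and follows the same plan as the paper's: pick a uniform truncation level $n$, compute each $\mathrm{Hom}_\Delta$ as an end over $\Delta_{\leq n}$, use that $A \mapsto X^A$ converts finite colimits of finite sets to finite limits in $\mathcal{C}$, and interchange the end with the finite limit. The paper cites MacLane's Fubini theorem for the last interchange where you give the explicit equalizer-of-products description, and you spell out the adjoint-functor argument that the paper leaves implicit when asserting a uniform $n$ exists; these are expository refinements rather than a different route.
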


\begin{proof} First observe that
$$
A \mapsto X^A
$$
is left exact with respect to $A$, and that direct limits in $(\Delta^{\mathrm{op}} \, {\rm Sets})_{\rm finite}$ are computed degreewise.  Since $I$ is finite, there is an $n \geq 0$ such that
$$
\varinjlim_{i \in I} \, {\rm Sk}_n (A(i).) = {\rm Sk}_n (\varinjlim_{i \in I} A(i).) \, .
$$
Therefore
\begin{eqnarray*}
{\rm Hom}_{\Delta} (\varinjlim_{i \in I} A(i). , X.) &:= &\int_{p \in \Delta_{\leq n}} {\rm Hom} \, (\varinjlim_{i \in I} A(i)_p , X_p) \nonumber \\
&= &\int_{p \in \Delta_{\leq n}} {\rm Hom}_{\Delta_{\leq n}} ({\rm sk}_n (\varinjlim_{i \in I} A(i)_p) , {\rm sk}_n (X_p)) \nonumber \\
&= &\int_{p \in \Delta_{\leq n}} \varprojlim_{i \in I} ({\rm Hom} (A(i)_p , X_p)) \nonumber \\
&\simeq &\varprojlim_{i \in I} \int_{p \in \Delta_{\leq n}} {\rm Hom} \, (A(i)_p , X_p) \nonumber \\
&&\mbox{(by the Fubini theorem, \cite{MacLane-Categories-working-math}, IX.8)} \nonumber \\
&= &\varprojlim_{i \in I} {\rm Hom}_{\Delta} (A(i). , X.) \, . \nonumber
\end{eqnarray*}
\end{proof}

\subsection{Hypercovers and right lifting}

Recall that a map $f : X \to Y$ between simplicial sets is said to be a trivial fibration (II.2.2 of \cite{Quillen-Homotopical-Algebra}), if it has the right lifting property for all monomorphisms $i : A. \to B.$ of simplicial sets. That is, for all commutative squares:
$$
\xymatrix{
A.\ar[d]_i\ar[r]^\alpha & X.\ar[d]^f  \\
B.\ar[r]_\beta   & Y. \\
}
$$
there is a lifting $\gamma : B. \to X.\textbf{}$ such that the diagram
$$
\xymatrix{
A.\ar[d]_i\ar[r]^\alpha & X.\ar[d]^f  \\
B.\ar[r]_\beta\ar@{-->}[ur]^\gamma   & Y. \\
}
$$
commutes.  We can remove the explicit mention of the maps $\alpha$ and $\beta$, by defining
$\mathrm{Hom} (A.,X.) \times_{\mathrm{Hom} (A.,Y.)} \mathrm{Hom} (B.,Y.)$ to be the pull back in the square
$$
\xymatrix{
& \mathrm{Hom}(B.,A.)\ar[d]^{i^*}  \\
\mathrm{Hom}(A.,X.)\ar[r]_{f_*}   & \mathrm{Hom}(A.,Y.) \\
}
$$
Then the lifting property becomes the assertion that the natural map
$$
\mathrm{Hom} (B.,X.) \overset{(i^*,f_*)}{\longrightarrow} \mathrm{Hom}(A.,X.)\times_{\mathrm{Hom}(A.,Y.)}\mathrm{Hom}(B.,Y.)$$
is surjective.

Suppose now that $f : X. \to Y.$ is a morphism in $\Delta^{\mathrm{op}} \mathcal{C}$.  If  $i : A. \to B.$ is a monomorphism between finite simplicial sets, we define
$$
\mathrm{Hom}_\Delta (A.,X.) \times_{\mathrm{Hom}_\Delta(A.,Y.)} \mathrm{Hom}_\Delta (B.,Y.)
$$
to be the pull back in the square
$$
\xymatrix{
& \mathrm{Hom}_\Delta(B.,A.)\ar[d]^{i^*}  \\
\mathrm{Hom}_\Delta(A.,X.)\ar[r]_{f_*}   & \mathrm{Hom}_\Delta(A.,Y.) \\
}
$$

Let $\mathcal{P}$ be a class of morphisms in $\mathcal{C}$ which contains isomorphisms,
 is closed under composition,
 and is closed under base change.
It is straightforward to check that these two conditions imply that ${\mathcal P}$ is closed under products.

\begin{defn}
We say that a morphism $f : X. \to Y.$ of simplicial objects in ${\mathcal C}$ has the right ${\mathcal P}$-lifting property with respect to an inclusion $i : A. \hookrightarrow B.$ between finite simplicial sets, if the map
$$
{\rm Hom}_{\Delta} (B. , X.) \to {\rm Hom} (A. , X.) \times_{{\rm Hom} (A. , Y.)} {\rm Hom} (B. , Y.)
$$
is in ${\mathcal P}$.
\end{defn}

Recall that the morphism $f : X \to Y$ in $\Delta^{\mathrm{op}} \mathcal{C}$ is called a $\mathcal{P}$-hypercover, if for all $n \geq 0$, the morphism
$$
X_n \to {\rm Cosk}^{Y.}_{n-1} (X.)_n
$$
is in $\mathcal{P}$. By definition of ${\rm Cosk}^{Y.}_{n-1} (X.)_n$, this is equivalent to $f$ having, for each $n \geq 1$,
the right lifting property with respect to the inclusion
$$
{\rm sk}_{n-1} (\Delta_n) \subset \Delta_n \, ,
$$
and for $n = 0$, this means that $X_0 \to Y_0$ is in ${\mathcal P}$, {\it i.e.} the lifting property with respect to
$\emptyset \hookrightarrow \Delta_0$. Applying II.3.8. of \cite{Gabriel-Zisman} and Proposition
\ref{left_exact}, we obtain:

\begin{lemma}
A map $f : X \to Y$ in $\Delta^{\mathrm{op}} \mathcal{C}$ is a $\mathcal{P}$-hypercover if and only if, for all monomorphisms $i : A. \to B.$ of finite simplicial sets, the natural map
$$
\mathrm{Hom}_\Delta (B.,X.) \overset{(i^*,f_*)}{\longrightarrow} \mathrm{Hom}_\Delta (A.,X.) \times_{\mathrm{Hom}_\Delta(A.,Y.)} \mathrm{Hom}_\Delta (B.,Y.)
$$
is in $\mathcal{P}$, {\rm i.e.} $f$ has the right lifting property with respect to {\rm all} injective maps between finite simplicial sets.
\end{lemma}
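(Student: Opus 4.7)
The plan is to reduce the lifting property for arbitrary monomorphisms of finite simplicial sets to the lifting property for the standard boundary inclusions $\partial\Delta_n \hookrightarrow \Delta_n$, which is exactly the definition of a $\mathcal{P}$-hypercover in view of the identity ${\rm sk}_{n-1}(\Delta_n) = \partial\Delta_n$.

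The \emph{if} direction is immediate: applying the hypothesis to the inclusions $\partial\Delta_n \hookrightarrow \Delta_n$ for $n \geq 1$ and to $\emptyset \hookrightarrow \Delta_0$ yields, by construction, precisely the maps $X_n \to {\rm Cosk}^{Y.}_{n-1}(X.)_n$ and $X_0 \to Y_0$ whose membership in $\mathcal{P}$ is the definition of a $\mathcal{P}$-hypercover.

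For the \emph{only if} direction I would invoke II.3.8 of \cite{Gabriel-Zisman}, which presents any monomorphism $i : A. \hookrightarrow B.$ between finite simplicial sets as a finite composite
$$
A. = B^{(-1)} \hookrightarrow B^{(0)} \hookrightarrow \cdots \hookrightarrow B^{(N)} = B. \, ,
$$
in which each $B^{(k-1)} \hookrightarrow B^{(k)}$ is obtained by attaching the non-degenerate simplices of $B.$ not in $A.$ of a single dimension $n_k$, and hence fits into a pushout square whose left vertical edge is $\coprod_{j \in J_k} \partial\Delta_{n_k} \hookrightarrow \coprod_{j \in J_k} \Delta_{n_k}$, with $J_k$ finite. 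Applying ${\rm Hom}_\Delta(-, X.)$ and ${\rm Hom}_\Delta(-, Y.)$ to this filtration, and using Proposition~\ref{left_exact} to convert the finite pushouts in $(\Delta^{\rm op}{\rm Sets})_{\rm finite}$ into finite pullbacks in $\mathcal{C}$, a routine diagram chase identifies the relative map attached to $i$ as a composition (over $k = 0, \ldots, N$) of base changes of the relative maps at each stage of the filtration, and further identifies the relative map at stage $k$ as the $J_k$-fold product of the basic relative map for $\partial\Delta_{n_k} \hookrightarrow \Delta_{n_k}$.

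The conclusion then follows from the three closure properties of $\mathcal{P}$: closure under products (noted just before the definition) yields membership of the $J_k$-fold products in $\mathcal{P}$, closure under base change propagates membership through successive stages of the filtration, and closure under composition assembles the factorization into the single map for $i$. The only genuine obstacle is the bookkeeping step of identifying iterated relative Hom constructions with iterated pullbacks of the basic boundary maps; this is routine once one has the cell filtration at hand and invokes the left exactness of ${\rm Hom}_\Delta$ in its first variable supplied by Proposition~\ref{left_exact}.
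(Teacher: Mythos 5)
Your proposal is correct and follows exactly the route the paper intends: the paper simply presents this lemma as an immediate consequence of II.3.8 of Gabriel--Zisman together with Proposition~\ref{left_exact}, giving no further details, and your argument supplies precisely the cell-filtration and closure-property bookkeeping that the paper's terse ``Applying \ldots\ we obtain'' leaves to the reader.
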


From which it immediately follows that:

\begin{cor}
If $f : X \to Y$ in $\Delta^{\mathrm{op}} \mathcal{C}$ is a $\mathcal{P}$-hypercover, then for all $n$, $f_n \in \mathcal{P}$.
\end{cor}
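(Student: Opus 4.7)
The plan is to apply the preceding lemma to the very simplest monomorphism of finite simplicial sets, namely the inclusion $\emptyset \hookrightarrow \Delta[n]$ of the empty simplicial set into the standard $n$-simplex. Both are finite simplicial sets in the sense of this paper: each set of $k$-simplices is finite, and $\Delta[n] = \mathrm{Sk}_n \Delta[n]$ since the nondegenerate simplices of $\Delta[n]$ live in degrees $\leq n$. Thus the hypotheses of the lemma are met.

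Next I would identify the domain and codomain of the lifting map in this particular case. Using the defining end together with the enriched form of Yoneda's lemma, there is a canonical isomorphism
\[
\mathrm{Hom}_\Delta(\Delta[n], Z.) \;=\; \int_{[p]\in\Delta_{\leq n}} Z_p^{\Delta[n]_p} \;\cong\; Z_n
\]
for every simplicial object $Z.$ in $\mathcal{C}$: the object $\Delta[n]$ represents ``evaluation in degree $n$'' once one rewrites natural transformations as an end of power objects. On the other hand, for each $p$ the object $X_p^{\emptyset}$ is the terminal object $*$ of $\mathcal{C}$, so the end $\mathrm{Hom}_\Delta(\emptyset, X.)$ equals $*$, and likewise $\mathrm{Hom}_\Delta(\emptyset, Y.) = *$.

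With these identifications in hand, the target pullback collapses to
\[
\mathrm{Hom}_\Delta(\emptyset, X.) \times_{\mathrm{Hom}_\Delta(\emptyset, Y.)} \mathrm{Hom}_\Delta(\Delta[n], Y.) \;\cong\; * \times_{*} Y_n \;\cong\; Y_n,
\]
and the natural map $(i^{*}, f_{*})$ appearing in the lemma becomes precisely $f_n : X_n \to Y_n$. The lemma therefore asserts $f_n \in \mathcal{P}$, which is the claim.

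I do not anticipate a genuine obstacle. The only point that requires any care is the Yoneda-style identification $\mathrm{Hom}_\Delta(\Delta[n], Z.) \cong Z_n$, which is routine once one unwinds the end: the wedge condition forces an element of the end to be determined by its component at $[n] \in \Delta_{\leq n}$ applied to the identity $\mathrm{id}_{[n]} \in \Delta[n]_n$, which recovers an arbitrary element of $Z_n$.
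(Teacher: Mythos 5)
Your proposal is correct and takes essentially the same approach as the paper, which simply invokes the preceding lemma for the inclusion $\emptyset \hookrightarrow \Delta_n$; you have merely spelled out the Yoneda-style identifications $\mathrm{Hom}_\Delta(\Delta[n], Z.) \cong Z_n$ and $\mathrm{Hom}_\Delta(\emptyset, Z.) \cong *$ that the paper leaves implicit.
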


\begin{proof} For any given $n$, this is simply the right lifting property with respect to the inclusion $\phi \hookrightarrow \Delta_n$. \end{proof}
\begin{lemma}
\label{props-tower}
If $f : X. \to Y.$ is a $\mathcal{P}$-hypercover, then the tower of maps:
\begin{multline*}
\ldots \to {\rm Cosk}^{Y.}_{n+1} (X.). \overset{f_{n+1}}{\longrightarrow} {\rm Cosk}^{Y.}_n(X.). \to\\ \ldots {\rm Cosk}^{Y.}_0(X.). \overset{f_0}{\longrightarrow} {\rm Cosk}^{Y.}_{-1} (X.). = Y.
\end{multline*}
has the following properties for all $n \geq 0$:
\begin{itemize}
\item[{\rm 1)}] the natural map
$$
X_p \to {\rm Cosk}_n^{Y.} (X.)_p
$$
is an isomorphism for $p \leq n$.
\item[{\rm 2)}] $(f_n)_p$ is an isomorphism for $p < n$.
\item[{\rm 3)}] $(f_n)_n \in {\mathcal P}$.
\end{itemize}
\end{lemma}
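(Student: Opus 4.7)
The plan is to observe that all three assertions follow essentially formally from Lemma \ref{props-cosk} together with the definition of a $\mathcal{P}$-hypercover, so the proof should be short and almost mechanical.

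First I would dispatch (1), which is nothing other than part 2 of Lemma \ref{props-cosk} applied to $X. \to Y.$, stating that for $n \geq p \geq 0$, the canonical map $X_p \to {\rm Cosk}^{Y.}_n(X.)_p$ is an isomorphism. No additional work is required here since $X.$ itself is a simplicial object over $Y.$ and ${\rm Cosk}^{Y.}_n$ truncates and then right-Kan-extends, leaving degrees $\leq n$ unchanged.

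Next, (2) follows by combining (1) at levels $n$ and $n-1$: for $p < n$, both ${\rm Cosk}^{Y.}_n(X.)_p$ and ${\rm Cosk}^{Y.}_{n-1}(X.)_p$ are canonically identified with $X_p$, and the map $(f_n)_p$ becomes the identity under these identifications. Alternatively, this is an instance of part 3 of Lemma \ref{props-cosk}.

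Finally, (3) is where the hypercover hypothesis enters: by (1) the object ${\rm Cosk}^{Y.}_n(X.)_n$ is canonically $X_n$, and under this identification $(f_n)_n$ becomes the structural map $X_n \to {\rm Cosk}^{Y.}_{n-1}(X.)_n$, which lies in $\mathcal{P}$ by the very definition of a $\mathcal{P}$-hypercover. I expect no real obstacle; the only thing to be careful about is to check that the canonical isomorphism of (1) commutes with the transition maps of the tower so that (2) and (3) indeed concern the maps $(f_n)_p$ as stated, but this is immediate from the adjunctions $({\rm sk}_n^{Y.}, {\rm cosk}_n^{Y.})$ and the naturality of the counits.
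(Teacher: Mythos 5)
Your proof is correct and follows exactly the paper's own argument: parts (1) and (2) are instances of Lemma~\ref{props-cosk}, and part (3) follows from (1) together with the definition of a $\mathcal{P}$-hypercover. You have merely spelled out the details that the paper leaves implicit.
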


\begin{proof} 1) and 2) are already in Lemma~\ref{props-cosk} while 3) follows from 1) and the definition of a hypercover.\end{proof}

For schemes and stacks of finite type over $S$, we shall be interested in hypercovers with respect to two possible choices of ${\mathcal P}$:
\begin{itemize}
\item[i)] ${\mathcal P}$ consists of those maps $f : X \to Y$ which are {\it envelopes}
{\it i.e.} are proper and surjective on $F$-valued points for all fields $F$.  We shall refer to such hypercovers as \emph{hyperenvelopes}.
\item[ii)] ${\mathcal P}$ consists of all maps $f : X \to Y$ which are proper and surjective,
{\it i.e.}, surjective on $F$-valued points for all {\it algebraically closed} fields. We shall refer to such hypercovers as \emph{proper hypercovers}.
\end{itemize}

The proof of the following proposition is a straightforward consequence of the definition of right ${\mathcal P}$-lifting, and the valuative criterion of properness:

\begin{prop}
Let $f. : X. \to Y.$ be a map of simplicial varieties. Then $f.$ is a proper hypercover if, for all injections $i : A \hookrightarrow B$ between finite simplicial sets, the following two conditions hold:
\begin{itemize}
\item[{\rm i)}] For all algebraically closed fields $F$, and each commutative diagram:
$$
\xymatrix{
A. \times {\rm Spec} (F) \ar@{^{(}->}[d]^{i.} \ar[r]^{\ \ \ \ \ \alpha} & X.\ar[d]^{f.}  \\
B. \times {\rm Spec} (F) \ar[r]^{\ \ \ \ \ \beta} \ar@{-->}[ur]^{\rho.} & Y. \\
}
$$
there is a lifting $\rho. : B. \times {\rm Spec} (F) \to X.$
\item[{\rm ii)}] For every valuation ring $\Lambda$ with fraction field $F$, and each commutative diagram:
$$
\xymatrix{
A. \times {\rm Spec} (\Lambda) \cup B. \times {\rm Spec} (F) \ar@{^{(}->}[d] \ar[r]^{\ \ \ \ \ \ \ \ \ \ \ \ \ \ \ \alpha} & X.\ar[d]  \\
B. \times {\rm Spec} (\Lambda) \ar[r]^{\ \ \ \ \ \ \ \beta}\ar@{-->}[ur]^{\rho.}   & Y. \\
}
$$
there is a lifting $\rho. : B. \times {\rm Spec} (\Lambda) \to X.$
\end{itemize}
$f.$ is a {\rm hyperenvelope} if in {\rm (i)}, the field $F$ is {\rm any} field.
\end{prop}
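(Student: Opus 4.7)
The plan is to translate the abstract lifting condition defining a $\mathcal{P}$-hypercover into concrete criteria by combining the functor-of-points description of the $\mathrm{Hom}_\Delta$ schemes with the valuative criterion of properness.

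First I would unwind the defining condition. By the preceding lemma, $f.$ is a proper hypercover precisely when, for every monomorphism $i : A. \hookrightarrow B.$ of finite simplicial sets, the comparison morphism
$$
\Phi_{i} : \mathrm{Hom}_\Delta(B., X.) \longrightarrow \mathrm{Hom}_\Delta(A., X.) \times_{\mathrm{Hom}_\Delta(A., Y.)} \mathrm{Hom}_\Delta(B., Y.)
$$
is proper and surjective on $F$-points for every algebraically closed field $F$. Since each $X_n, Y_n$ is a variety over $S$ and $A., B.$ are finite, each $\mathrm{Hom}_\Delta$-object is a finite inverse limit of finite products of the $X_n$ and $Y_n$, hence is a scheme of finite type over $S$; the morphism $\Phi_i$ is therefore automatically of finite type, and separated because $X.$ and $Y.$ are separated.

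Next, for any scheme $T$ over $S$, the universal property of the end giving $\mathrm{Hom}_\Delta$ identifies $T$-points of $\mathrm{Hom}_\Delta(A., X.)$ with maps of simplicial schemes $A. \times T \to X.$, and similarly for $B.$ and $Y.$. A $T$-point of the target of $\Phi_i$ is therefore a pair $(\alpha, \beta)$ consisting of a simplicial map $\alpha : A. \times T \to X.$ and a simplicial map $\beta : B. \times T \to Y.$ such that $f. \circ \alpha = \beta \circ (i \times \mathrm{id}_T)$; and a $T$-point of the source is a simplicial map $\rho : B. \times T \to X.$, which maps to the pair $(\rho \circ (i \times \mathrm{id}_T), f.\circ \rho)$. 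Thus surjectivity of $\Phi_i$ on $F$-points (for $F$ algebraically closed, resp. arbitrary) is exactly the existence of the lift $\rho.$ in condition (i) for $T = \mathrm{Spec}(F)$ in the proper hypercover case (resp. the hyperenvelope case).

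Finally I would invoke the valuative criterion for properness of the finite type morphism $\Phi_i$: $\Phi_i$ is proper iff for every valuation ring $\Lambda$ with fraction field $F$ and every commutative square
$$
\xymatrix{
\mathrm{Spec}(F) \ar[r] \ar[d] & \mathrm{Hom}_\Delta(B., X.) \ar[d]^{\Phi_i} \\
\mathrm{Spec}(\Lambda) \ar[r] & \mathrm{Hom}_\Delta(A., X.) \times_{\mathrm{Hom}_\Delta(A., Y.)} \mathrm{Hom}_\Delta(B., Y.)
}
$$
there exists a (necessarily unique, by separatedness) lift $\mathrm{Spec}(\Lambda) \to \mathrm{Hom}_\Delta(B., X.)$. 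Rewriting this square with the functor-of-points dictionary above, the datum in the upper row becomes a simplicial map $B. \times \mathrm{Spec}(F) \to X.$, while the lower row encodes a compatible simplicial map from $A. \times \mathrm{Spec}(\Lambda) \cup_{A. \times \mathrm{Spec}(F)} B. \times \mathrm{Spec}(F)$ to $X.$ together with an extension to $B. \times \mathrm{Spec}(\Lambda) \to Y.$; this is precisely the hypothesis of condition (ii), and the required lift is the map $\rho.$ asserted there. Combining the surjectivity translation and the valuative criterion translation yields the proposition; the hyperenvelope statement follows by repeating the argument with $\mathcal{P} = \{\text{proper and surjective on all field points}\}$, so that the only change is allowing arbitrary $F$ in (i). The one step requiring care is the description of $T$-points of $\mathrm{Hom}_\Delta(A., X.)$ as simplicial maps $A. \times T \to X.$, which follows from Proposition~\ref{left_exact} once one writes $A. \times T = A. \times \mathrm{Cosk}_{-1}^S(T)$ and uses adjointness of base change.
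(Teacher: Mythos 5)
Your argument is correct and is exactly the route the paper intends: it invokes the preceding lemma reducing a $\mathcal{P}$-hypercover to the right $\mathcal{P}$-lifting property for all monomorphisms of finite simplicial sets, identifies $T$-points of the $\mathrm{Hom}_\Delta$ schemes with simplicial maps $A.\times T \to X.$, and then translates membership in $\mathcal{P}$ into surjectivity on algebraically closed (resp. arbitrary) field points together with the valuative criterion of properness. The paper gives no further detail, so your write-up is simply a fleshed-out version of the same proof.
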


\subsection{Hypercovers of schemes and stacks}

Recall that a stack ${\mathfrak X}$ over $S$ is in particular a category over the category of $S$-varieties:
$$
\alpha : {\mathfrak X} \to {\mathcal V}ar_S
$$
with $\alpha^{-1} (X)$ the groupoid ${\rm Hom} (X,{\mathfrak X})$. If $f : X \to {\mathfrak X}$ and $g : Y \to {\mathfrak X}$ are two morphisms, then $f \times_{\mathfrak X} g$ is the product of $f$ and $g$ in the category ${\mathfrak X}$. Then $\alpha (f \times_{\mathfrak X} g)$ is the scheme parametrizing isomorphisms:
$$
\theta : (f \circ p_X = p_X^* (f)) \simeq (g \circ p_Y = p_Y^* (g))
$$
where $p_X : X \times_S Y \to X$ and $p_Y : X \times_S Y \to Y$ are the projections.

\begin{defn}
A morphism from a simplicial scheme to a stack ${\mathfrak X}$, $f. : X. \to {\mathfrak X}$, consists of a simplicial object $f.$ in ${\mathfrak X}$, such that $\alpha (f.) = X.$
\end{defn}

\begin{lemma}
Given $f.$ as above, let ${\rm cosk}_0^{\mathfrak X} (X_0).$ be the simplicial scheme $n \mapsto X_0 \times_{\mathfrak X} \ldots \times_{\mathfrak X} X_0 = \alpha (f_0 \times_{\mathfrak X} \ldots \times_{\mathfrak X} f_0)$, in which ${\rm cosk}_0^{\mathfrak X} (X_0)_n$ parametrizes $n$-tuples
$$
(\theta_{0,1} , \ldots , \theta_{n-1,n})
$$
with $\theta_{i,i+1} : f_0 \to f_0$ an isomorphism in the groupoid $\alpha^{-1} (X) = {\rm Hom} (X,{\mathfrak X})$. Then giving the full simplicial object $f.$ in ${\mathfrak X}^{\Delta^{\rm op}}$ is equivalent to giving a map of simplicial schemes
$$
X. \to {\rm cosk}_0^{\mathfrak X} (X_0) \, .
$$
\end{lemma}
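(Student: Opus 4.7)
The statement is a universal-property identification: both sides parametrize the same descent data attached to $f_0 : X_0 \to {\mathfrak X}$ and the underlying simplicial scheme $X.$ My plan is to unpack each side in terms of morphisms in the fibered groupoid $\alpha : {\mathfrak X} \to {\mathcal V}ar_S$, then produce inverse bijections.

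First I unpack the right-hand side. Iterating the description of $\alpha(f \times_{\mathfrak X} g)$ from the preceding paragraph gives, for any scheme $T$, a bijection between morphisms $T \to {\rm cosk}_0^{\mathfrak X}(X_0)_n = X_0 \times_{\mathfrak X} \cdots \times_{\mathfrak X} X_0$ and collections $(t_0,\dots,t_n;\theta_{0,1},\dots,\theta_{n-1,n})$ where each $t_i : T \to X_0$ is a morphism of schemes and each $\theta_{i,i+1} : f_0 \circ t_i \xrightarrow{\sim} f_0 \circ t_{i+1}$ is an isomorphism in the groupoid ${\rm Hom}(T,{\mathfrak X}) = \alpha^{-1}(T)$. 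Applying this with $T = X_n$ and forcing compatibility with the simplicial face/degeneracy operators (which delete a coordinate, respectively repeat a coordinate, on ${\rm cosk}_0^{\mathfrak X}(X_0).$) identifies the $t_i$ with the vertex maps $X(e_i^n) : X_n \to X_0$, where $e_i^n : [0] \to [n]$ sends $0 \mapsto i$. Thus a map of simplicial schemes $X. \to {\rm cosk}_0^{\mathfrak X}(X_0).$ is the same as a system of isomorphisms $\theta_{i,i+1}^{(n)} : f_0 \circ X(e_i^n) \xrightarrow{\sim} f_0 \circ X(e_{i+1}^n)$ in ${\rm Hom}(X_n,{\mathfrak X})$, compatible under pullback along every simplicial operator.

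Next I unpack the left-hand side. A simplicial object $f. : \Delta^{\mathrm{op}} \to {\mathfrak X}$ with $\alpha(f.) = X.$ consists of objects $f_n \in \alpha^{-1}(X_n)$ together with, for every $\phi : [m] \to [n]$ in $\Delta$, a morphism $f(\phi) : f_n \to f_m$ in ${\mathfrak X}$ over $X(\phi)$, satisfying $f(\phi \circ \psi) = f(\psi) \circ f(\phi)$. Because ${\mathfrak X}$ is fibered in groupoids, each such $f(\phi)$ is equivalent, via the cartesian lifting property, to an isomorphism $\iota_\phi : f_n \xrightarrow{\sim} f_m \circ X(\phi)$ in the groupoid ${\rm Hom}(X_n,{\mathfrak X})$, and functoriality translates into the cocycle relation $\iota_{\phi\circ\psi} = (\iota_\psi \circ X(\phi)) \cdot \iota_\phi$.

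Finally I build the two maps. From $f.$ to a simplicial map: for each $n$ and each vertex $i \in [n]$ apply the above to $e_i^n$, obtaining $\iota_i : f_n \xrightarrow{\sim} f_0 \circ X(e_i^n)$, and set $\theta_{i,i+1}^{(n)} := \iota_{i+1} \circ \iota_i^{-1}$; simpliciality of the resulting map $X. \to {\rm cosk}_0^{\mathfrak X}(X_0).$ is immediate from functoriality of $f.$ Conversely, from a simplicial map record the data $\theta_{i,i+1}^{(n)}$, set $f_n := f_0 \circ X(e_0^n)$, and for $\phi : [m] \to [n]$ take $\iota_\phi$ to be the composition $\theta_{0,\phi(0)}^{(n)} = \theta_{\phi(0)-1,\phi(0)}^{(n)} \circ \cdots \circ \theta_{0,1}^{(n)}$, viewed as an isomorphism $f_0 \circ X(e_0^n) \to f_0 \circ X(e_{\phi(0)}^n) = f_m \circ X(\phi)$. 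The cocycle relation, and hence functoriality in $\phi$, follows at once from the fact that adjacent $\theta$'s in a groupoid compose to give all $\theta_{i,j}$'s coherently. Checking that these two constructions are mutually inverse is then a routine bookkeeping exercise. The only genuine content is the use of the cartesian/groupoid structure of ${\mathfrak X}$ to replace morphisms in ${\mathfrak X}$ by isomorphisms in fiber groupoids, and this is where the Deligne--Mumford (equivalently, groupoid-fibered) hypothesis on ${\mathfrak X}$ is essential; everything else is the universal property of iterated fiber products plus the simplicial bookkeeping.
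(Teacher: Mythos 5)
Your proposal is correct and follows essentially the same route as the paper's proof: you unpack a map to ${\rm cosk}_0^{\mathfrak X}(X_0)_n$ as a tuple of vertex maps together with a chain of adjacent isomorphisms $\theta_{i,i+1}$, pass through the cartesian/groupoid structure to replace this chain by a single object $f_n$ over $X_n$ with isomorphisms $\iota_i$ (the paper's $\theta_i$) to all vertex pullbacks, and go back and forth via $\theta_{i,i+1}=\iota_{i+1}\circ\iota_i^{-1}$ and the normalization $f_n := (p_0\varphi_n)^*(f_0)$. The only difference is that you spell out the simplicial/cocycle bookkeeping more explicitly, which the paper leaves implicit.
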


\begin{proof} A map
$$
X_n \to {\rm cosk}_0^{\mathfrak X} (X_0)_n
$$
is equivalent to giving a map
$$
\varphi_n : X_n \to X_0 \times_S \ldots \times_S X_0 = {\rm cosk}_0^S (X_0)_n
$$
and isomorphisms, for $i = 1 \ldots n$
$$
\theta_{i-1,i} : (p_{i-1} \cdot \varphi_n)^* \, (f_0) \to (p_i \cdot \varphi_n)^* (f_0) \, .
$$
This is equivalent, up to canonical isomorphism, to giving an object $f_n \in {\mathfrak X}$ with $\alpha (f_n) = X_n$ and isomorphisms, for $i = 0, \ldots , n$:
$$
\theta_i : f_n \to (p_i \, \varphi_n)^* \, (f_0) \, .
$$
Given $f_n$, and the $\theta_i$, set $\theta_{i-1 , i} = \theta_i \cdot \theta_{i-1}^{-1}$; notice that there is then a canonical isomorphism $f_n \simeq (p_0 \, \varphi_n)^* \, (f_0)$ while given the $\theta_{i-1 , i}$, we can set $f_n$ equal to $(p_0 \, \varphi_n)^* \, (f_0)$.
\end{proof}

\begin{defn}
We say that $f. : X. \to {\mathfrak X}$ is a {\rm proper hypercover} if
\begin{itemize}
\item[{\rm 0)}] $f_0 : X_0 \to {\mathfrak X}$ is proper and surjective.
\item[{\rm 1)}] For all $n \geq 1$, the map
$$
\alpha (f_n) = X_n \to \alpha ({\rm cosk}_{n-1} (f.)_n)
$$
is proper and surjective ({\rm i.e.} the natural map of simplicial schemes $X. \to {\rm cosk}_0^{\mathfrak X} (X_0).$ is a proper hypercover).
\end{itemize}
\end{defn}

Given any stack ${\mathfrak X}$ there exists a proper hypercover $X. \to {\mathfrak X}$ as above. Indeed, Chow's lemma implies the existence of $f_0 : X_0 \to {\mathfrak X}$ proper and surjective. Then choose $X. = {\rm cosk}_0^{\mathfrak X} (X_0)$.

\begin{lemma}
\label{proper2}
If ${\mathfrak X}$ is a stack and $f. : X. \to {\mathfrak X}$ is a
morphism from a simplicial scheme to ${\mathfrak X}$,
then for all morphisms $g : T \to {\mathfrak X}$ with $T$ a scheme,
$f.$ induces a map $f_T. : X. \times_{\mathfrak X} T \to T$ which is a proper
hypercover if $f.$ is. Furthermore, if $g : T \to {\mathfrak X}$ is \'etale and surjective,
then $f$ is a proper hypercover if and only if $f_T.$ is.
\end{lemma}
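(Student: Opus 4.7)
The plan is to reduce both assertions to base-change behavior of the iterated fibre products that compute $\mathrm{cosk}^{\mathfrak X}_{n-1}$. By the preceding lemma, specifying $f. : X. \to {\mathfrak X}$ is the same as specifying a morphism of simplicial schemes $X. \to \mathrm{cosk}_0^{\mathfrak X}(X_0).$, and the definition of proper hypercover amounts to the condition that for each $n\geq 0$ the canonical map
$$
X_n \to \mathrm{cosk}^{\mathfrak X}_{n-1}(X.)_n
$$
is proper and surjective (with the convention $\mathrm{cosk}_{-1}^{\mathfrak X}(X.)_0 = {\mathfrak X}$). Each target is a scheme, being a finite iterated fibre product over the Deligne-Mumford stack ${\mathfrak X}$ of schemes mapping to ${\mathfrak X}$.

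The crux of the argument is the base-change identity
$$
\mathrm{cosk}^{T}_{n-1}(X_T.)_n \;\simeq\; \mathrm{cosk}^{\mathfrak X}_{n-1}(X.)_n \times_{\mathfrak X} T,
$$
where $X_T. := X. \times_{\mathfrak X} T$. Granting this, assertion (1) is immediate: the map $(X_T)_n \to \mathrm{cosk}_{n-1}^T(X_T.)_n$ is simply the base change by $g: T\to {\mathfrak X}$ of $X_n\to \mathrm{cosk}_{n-1}^{\mathfrak X}(X.)_n$, and properness and surjectivity are stable under arbitrary base change. For the converse when $g$ is \'etale and surjective, the base change of $\mathrm{cosk}_{n-1}^{\mathfrak X}(X.)_n$ by $g$ remains surjective and \'etale; since both properness and surjectivity are local on the target for the \'etale (indeed fpqc) topology, properness and surjectivity of $(X_T)_n \to \mathrm{cosk}_{n-1}^T(X_T.)_n$ descend to give the same for $X_n\to \mathrm{cosk}_{n-1}^{\mathfrak X}(X.)_n$, as required.

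The main obstacle is thus Step~1, the base-change compatibility of $\mathrm{cosk}^{\mathfrak X}_{n-1}$. This is essentially formal, since $\mathrm{cosk}$ is a right adjoint and hence commutes with limits, and the fibre product $(-)\times_{\mathfrak X} T$ is itself a limit. Concretely one imitates the formula $\mathrm{cosk}_n^{Y.}(X.) = \mathrm{cosk}_n(X.)\times_{\mathrm{Cosk}_n(Y.)} Y.$ recalled earlier in Section~2.1: at the $n$-th level one has a finite diagram of fibre products of the $X_k$ ($k<n$) over ${\mathfrak X}$, and $(-)\times_{\mathfrak X} T$ preserves the diagram and all its fibre products, yielding the asserted isomorphism. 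The care needed here is just bookkeeping with the fibre products over a stack rather than a scheme; nothing deeper is required, because ${\mathfrak X}$ being Deligne-Mumford ensures that all the relevant fibre products of schemes over ${\mathfrak X}$ are themselves schemes.
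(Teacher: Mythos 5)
Your proposal is correct and follows essentially the same route as the paper's proof: identify $\mathrm{cosk}_{n-1}^{T}(X_T.)_n$ with $\mathrm{cosk}_{n-1}^{\mathfrak{X}}(X.)_n \times_{\mathfrak X} T$ (the paper phrases this via "$\mathrm{Cosk}_{n-1}$ commutes with products since it is a right adjoint," together with a small cartesian-square lemma about fibre products over a stack), then use stability of properness and surjectivity under base change for the direct implication and \'etale-local descent of these properties for the converse.
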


\begin{proof} First we need a technical lemma about fibre products.

\begin{lemma}
\label{fiber}
Let $f : X \to {\mathfrak X}$, $g : Y \to {\mathfrak X}$, $h: Z \to {\mathfrak X}$ be three morphisms from schemes to a stack ${\mathfrak X}$. Suppose we are given a map of schemes $p : X \to Y$, and a map $\tilde p : f \to g$ in ${\mathfrak X}$ covering $p$. Then the commutative square
$$
\xymatrix{
\alpha (f \times h) = X \times_{\mathfrak X} Z \ar[d] \ar[r] & Y \times_{\mathfrak X} Z = \alpha (g \times h) \ar[d] \\
\alpha (f = \tilde p \cdot g) = X \ar[r]^p   & Y = \alpha (g) \\
}
$$
is cartesian. {\rm I.e.} there is a canonical isomorphism
$$
X \times_{\mathfrak X} Z \overset{\sim}{\longrightarrow} X \times_Y Y \times_{\mathfrak X} Z \, .
$$
\end{lemma}

\begin{proof} Straightforward using the explicit description of fiber products in \cite{Laumon-M-B-stacks}, 2.2.2.
\end{proof}

Suppose now that $f.$ is a proper hypercover. Then, for $n=0$,
 $f_0 : X_0 \to {\mathfrak X}$ is proper and surjective.
Therefore $f_0 \times_{\mathfrak X} \, 1_T : X_0 \times_{\mathfrak X} T \to T$ is proper and surjective, by the definition of properness (\cite{Laumon-M-B-stacks}, Definition (7.11)).

\noindent For $n \geq 1$, we know that
$$
\alpha (f_n) = X_n \to \alpha ({\rm Cosk}_{n-1} (f.)_n)
$$
is proper and surjective. Now since ${\rm Cosk}_{n-1}$ commutes with products (since it is a right adjoint),
$$
{\rm cosk}_{n-1} (f. \times g)_n = {\rm cosk}_{n-1} (f.)_n \times g
$$
and so
$$
\alpha (f_n \times g) \to \alpha ({\rm cosk}_{n-1} (f. \times g)_n)
$$
is equal to
$$
X_n \times _{\mathfrak X} T \to \alpha ({\rm Cosk}_{n-1} (f.)_n \times g) \simeq \alpha ({\rm Cosk}_{n-1} (f.)) \times_{\mathfrak X} T
$$
which is again proper and surjective.

Suppose now that $g : T \to {\mathfrak X}$ is \'etale and surjective, and that $X. \times_{\mathfrak X} T \to T$ is a proper hypercover. Then
$$
X_0 \times_{\mathfrak X} T \to T
$$
is proper and surjective, and hence by \cite{Laumon-M-B-stacks} Remark (7.11.1) $X_0 \to {\mathfrak X}$ is proper and surjective. Similarly, for all $n \geq 1$,
$$
X_n \to \alpha ({\rm Cosk}_{n-1} (f.)_n)
$$
is proper and surjective. This proves Lemma~\ref{proper2}.\end{proof}
\begin{lemma}
\label{base change hypercovers stacks}
If $\mathfrak{X}$ is a stack, and $f. : X. \to \mathfrak{X}$ is a proper hypercover of
$\mathfrak{X}$ by a simplicial variety, then for all morphisms $g : Y. \to \mathfrak{X}$ with $Y.$ a simplicial variety, the induced map $f_{Y.} : Y. \times_{\mathfrak X} X. \to Y.$ is a proper hypercover.
\end{lemma}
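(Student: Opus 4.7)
The plan is to verify the two defining conditions for $f_{Y.} : Y. \times_{\mathfrak X} X. \to Y.$ being a proper hypercover degreewise, reducing everything to Lemma~\ref{proper2} via a naturality computation for coskeleta over constant simplicial stacks.

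For the base case $n=0$, the map $(Y. \times_{\mathfrak X} X.)_0 \to Y_0$ equals $Y_0 \times_{\mathfrak X} X_0 \to Y_0$, which is the base change of the proper surjective map $f_0 : X_0 \to {\mathfrak X}$ along $g_0 : Y_0 \to {\mathfrak X}$. Hence by Lemma~\ref{proper2} (applied with $T = Y_0$) it is proper and surjective.

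For $n \geq 1$, the key step is the identity
$$
\mathrm{Cosk}_{n-1}(Y. \times_{\mathfrak X} X.) \ \simeq \ \mathrm{Cosk}_{n-1}(Y.) \times_{\mathfrak X} \mathrm{Cosk}_{n-1}(X.).
$$
This holds because $\mathrm{sk}_{n-1}$ commutes with limits (being a restriction along the inclusion $\Delta_{\leq n-1} \subset \Delta$), its right adjoint $\mathrm{cosk}_{n-1}$ commutes with limits, and $\mathrm{Cosk}_{n-1}$ applied to the constant simplicial stack ${\mathfrak X}.$ is again ${\mathfrak X}.$ (the matching object of a constant truncated simplicial object is constant). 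Combining this with the definition
$$
\mathrm{cosk}^{Y.}_{n-1}(Y. \times_{\mathfrak X} X.) = \mathrm{Cosk}_{n-1}(Y. \times_{\mathfrak X} X.) \times_{\mathrm{Cosk}_{n-1}(Y.)} Y.,
$$
the right-hand side simplifies to $Y. \times_{\mathfrak X} \mathrm{Cosk}_{n-1}(X.)$. In degree $n$, the canonical map $(Y. \times_{\mathfrak X} X.)_n \to \mathrm{cosk}^{Y.}_{n-1}(Y. \times_{\mathfrak X} X.)_n$ therefore becomes
$$
Y_n \times_{\mathfrak X} X_n \ \longrightarrow \ Y_n \times_{\mathfrak X} \mathrm{Cosk}_{n-1}(X.)_n,
$$
which is precisely the base change of $X_n \to \mathrm{Cosk}_{n-1}(X.)_n$ along the projection $Y_n \times_{\mathfrak X} \mathrm{Cosk}_{n-1}(X.)_n \to \mathrm{Cosk}_{n-1}(X.)_n$. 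Since $f.$ is a proper hypercover of ${\mathfrak X}$, the map $X_n \to \mathrm{Cosk}_{n-1}(X.)_n$ is proper and surjective, and both of these properties are stable under base change (of schemes, resp.\ algebraic spaces); this yields the required hypercover condition in degree $n$.

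The main obstacle is the bookkeeping needed to justify that $\mathrm{Cosk}_{n-1}$ interacts correctly with fiber products formed over a stack viewed as a constant simplicial object; once this identity is secured, the proof is a direct degreewise base change argument combined with Lemma~\ref{proper2}.
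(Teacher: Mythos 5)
Your overall strategy is the same as the paper's (identify the relative matching object of $Y. \times_{\mathfrak X} X.$ over $Y.$ with $Y_n \times_{\mathfrak X} \mathrm{Cosk}^{\mathfrak X}_{n-1}(X.)_n$ and conclude by base change), but the justification of your key identity has a genuine flaw, and it hits exactly the first nontrivial degree. The parenthetical claim ``the matching object of a constant truncated simplicial object is constant'' is false when $n-1=0$: for the constant simplicial object on $\mathfrak{X}$ one has $\mathrm{Cosk}_0(\mathfrak{X}.)_m = \mathfrak{X} \times_S \cdots \times_S \mathfrak{X}$ ($m+1$ factors), i.e.\ the \v{C}ech nerve of $\mathfrak{X}/S$, not $\mathfrak{X}$ (the relevant index category is discrete, not connected). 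Consequently, for $n=1$ your identity $\mathrm{Cosk}_0(Y.\times_{\mathfrak X}X.) \simeq \mathrm{Cosk}_0(Y.)\times_{\mathfrak X}\mathrm{Cosk}_0(X.)$ is wrong as stated, and your degree-one target becomes $Y_1 \times_{\mathfrak X}(X_0\times_S X_0)$, whereas the correct one is $Y_1 \times_{\mathfrak X}(X_0\times_{\mathfrak X}X_0)$. This matters because the hypothesis that $f.$ is a proper hypercover of the \emph{stack} (Definition of proper hypercover of $\mathfrak{X}$) only makes the relative matching map $X_1 \to X_0\times_{\mathfrak X}X_0$ proper and surjective; there is no reason for $X_1 \to X_0\times_S X_0$ to be surjective (already for $X.=\mathrm{cosk}_0^{\mathfrak X}(X_0)$ it is not). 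For $n\geq 2$ the relative and absolute matching objects happen to coincide, so your argument is fine there, but $n=1$ cannot be skipped.

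The repair is exactly what the paper does: compute all coskeleta of $X.$ relative to $\mathfrak{X}$, i.e.\ with matching objects built from fiber products over $\mathfrak{X}$ (equivalently, in the category $\mathfrak{X}$ itself, where $\times_{\mathfrak X}$ is an honest product); then commutation of $\mathrm{Cosk}^{\mathfrak X}_{n-1}$ with $Y.\times_{\mathfrak X}(-)$ is formal, one gets $\mathrm{Cosk}^{Y.}_{n-1}(Y.\times_{\mathfrak X}X.)_n \simeq Y_n\times_{\mathfrak X}\mathrm{Cosk}^{\mathfrak X}_{n-1}(X.)_n$ in all degrees $n\geq 1$, and the definition of proper hypercover of $\mathfrak{X}$ supplies exactly the properness and surjectivity of $X_n \to \mathrm{Cosk}^{\mathfrak X}_{n-1}(X.)_n$. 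Note also that your final assertion that the map $Y_n\times_{\mathfrak X}X_n \to Y_n\times_{\mathfrak X}\mathrm{Cosk}^{\mathfrak X}_{n-1}(X.)_n$ ``is precisely the base change'' of $X_n \to \mathrm{Cosk}^{\mathfrak X}_{n-1}(X.)_n$ is not purely formal, since the fiber products over $\mathfrak{X}$ are $2$-fiber products: one needs the compatibility $Y_n\times_{\mathfrak X}X_n \simeq (Y_n\times_{\mathfrak X}\mathrm{Cosk}^{\mathfrak X}_{n-1}(X.)_n)\times_{\mathrm{Cosk}^{\mathfrak X}_{n-1}(X.)_n}X_n$ for a map $X_n \to \mathrm{Cosk}^{\mathfrak X}_{n-1}(X.)_n$ over $\mathfrak{X}$, which is the content of Lemma~\ref{fiber}; your proof uses this silently. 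Your treatment of the case $n=0$ via Lemma~\ref{proper2} is correct.
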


\begin{proof} We must show that for all $n \geq 0$ the natural map
$$
Y_n \times_{\mathfrak X} X_n \to {\rm Cosk}_{n-1}^{Y.} (Y. \times_{\mathfrak X} X.)_n \leqno (*)
$$
is proper and surjective. It is straightforward to verify, knowing the adjunction between ${\rm Sk}_{n-1}$ and ${\rm Cosk}_{n-1}$, that there is a canonical isomorphism of schemes over $Y_n$:
$$
{\rm Cosk}_{n-1}^{Y.} (Y. \times_{\mathfrak X} X.)_n \to Y_n \times_{\mathfrak X} {\rm Cosk}_{n-1}^{\mathfrak X} (X.)_n
$$
and that the composition of these two maps is the base change, by $Y_n \to {\mathfrak X}$, of the proper surjective map $X_n \to {\rm Cosk}_{n-1}^{\mathfrak X} (X.)_n$. But  by Lemma~\ref{fiber}\textbf{},
$$
Y_n \times_{\mathfrak X} X_n \simeq (Y_n \times_{\mathfrak X} {\rm Cosk}_{n-1}^{\mathfrak X} (X.)_n) \times_{{\rm Cosk}_{n-1}^{\mathfrak X} (X.)_n} X_n
$$
and so the map (*) is proper and surjective by base change with respect to the map
$$
Y_n \times_{\mathfrak X} {\rm cosk}_{n-1}^{\mathfrak X} (X.)_n \to {\rm cosk}_{n-1}^{\mathfrak X} (X.)_n \, .
$$
\end{proof}

\subsection{Compactification of simplicial varieties}

Recall that for $n \geq 0$, an $n$-truncated simplicial variety $X.$ is said to be {\it split} if for all $k \leq n$, the complement in $X_k$ of the image of the degeneracies:
$$
NX_k = X_k \setminus \bigcup_{s_i : X_{n-1} \to X_n} s_i (X_{n-1})
$$
is both open and closed in $X_n$. It then follows that for $0 \leq k \leq n$:
$$
X_k = \coprod_{\alpha : [k] \twoheadrightarrow [\ell]} X(\alpha) (NX_{\ell}) \, .
$$
If $X.$ is a simplicial variety, we say that it is $n$-{\it split} if ${\rm sk}_n (X.)$ is split, and {\it split} if it is $n$-split for all $n \geq 0$.

\begin{prop}
\label{split}
Given a simplicial variety $X.$ with proper face maps, there is a proper hypercover $f. : Y. \to X.$ with $Y.$ split.
\end{prop}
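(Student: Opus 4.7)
The proof proceeds by induction on the truncation level. The plan is to construct a compatible sequence of split $n$-truncated simplicial varieties $Y^{(n)}.$ equipped with proper hypercovers $f^{(n)}. : Y^{(n)}. \to {\rm sk}_n (X.)$; setting $Y_k := Y^{(k)}_k$ (which stabilises at each level by Lemma~\ref{props-cosk}) then yields the desired proper hypercover $f. : Y. \to X.$ with $Y.$ split. For the base case $n = 0$, take $Y_0 := X_0$ and $f_0$ the identity; the $0$-truncation carries no degeneracies and is trivially split.

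For the inductive step, assume $Y^{(n)}.$ is constructed. Let $M := {\rm cosk}_n^{X.} (Y^{(n)}.)_{n+1}$ be the matching variety. Since $X.$ has proper face maps and each $Y_k$ for $k \leq n$ is a variety whose face maps are pullbacks of proper maps combined with the proper face maps of $X.$, the scheme $M$ is a variety. Exploiting the split structure, define the degenerate part
\[
L := \coprod_{\substack{\alpha : [n+1] \twoheadrightarrow [k] \\ k \leq n,\ \alpha \neq {\rm id}}} NY_k,
\]
which is intended to parametrise the degenerate simplices of $Y_{n+1}$. The simplicial identities in $Y^{(n)}.$, coupled with the compatibility of $Y^{(n)}.$ with $X.$, produce a canonical map $\ell : L \to M$: degenerate simplices automatically assemble into matching families over $X_{n+1}$. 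Now set
\[
Y_{n+1} := L \amalg M
\]
with structure map $(\ell, {\rm id}) : Y_{n+1} \to M$; define the face maps $d_i : Y_{n+1} \to Y_n$ by composing with the projections $M \to Y_n$ built into $M$, and define the degeneracies $s_j : Y_n \to Y_{n+1}$ by inserting the summands of the split decomposition $Y_n = \coprod_{\beta : [n] \twoheadrightarrow [\ell]} X(\beta)(NY_\ell)$ into the corresponding summands of $L$.

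By construction $\bigcup_j s_j(Y_n) = L$, and since $Y_{n+1}$ is literally the disjoint union $L \amalg M$, the non-degenerate part $NY_{n+1} = M$ is both open and closed in $Y_{n+1}$, so $Y^{(n+1)}.$ is split. The structure map $Y_{n+1} \to M$ is proper and surjective because its restriction to the second summand is the identity; together with the inductive hypothesis and Lemma~\ref{props-tower} this verifies the proper hypercover condition through level $n+1$.

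The main obstacle is verifying that the proposed face and degeneracy maps on $L \amalg M$ satisfy the simplicial identities at level $n+1$, and dually that $\ell : L \to M$ is well defined. Both reduce to the same combinatorial bookkeeping: for every surjection $\alpha : [n+1] \twoheadrightarrow [k]$ and every face $\delta_i : [n] \to [n+1]$, one must rewrite $\alpha \circ \delta_i$ uniquely as $\beta \circ \gamma$ with $\gamma$ surjective and $\beta$ injective, and then identify the corresponding face of $s^\alpha(y)$ inside $Y_n$ via the split decomposition. Once this bookkeeping is handled, the construction of $\ell$, all simplicial identities at the new level, and the compatibility with $X.$ follow formally.
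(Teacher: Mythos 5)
Your proposal is correct and is essentially the paper's own argument: an inductive skeletal extension in which the new nondegenerate part at level $n+1$ is taken to be the full relative matching object $\mathrm{cosk}_n^{X.}(Y^{(n)}.)_{n+1}$ (the paper's choice ``take $\tilde f$ to be the identity''), with the existence of the split extension and its simplicial identities supplied by exactly the formalism the paper cites (SGA4 V$^{\mathrm{bis}}$ 5.1.3 and Deligne, Hodge III 6.2.4), which is the bookkeeping you defer. The one point you assert without real justification, properness (not merely surjectivity) of $Y_{n+1}\to M$ on the degenerate summand $L$, is routine and is the point the paper does verify: the degeneracies involved are closed immersions (sections of separated morphisms), so each summand $NY_k\to M$ is proper.
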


\begin{proof} We shall construct, by induction on $n \geq 0$, simplicial varieties $Y(n).$ (with proper face maps), together with maps
$$
f_{n+1} : Y(n+1). \to Y(n).
$$
with the following properties:
\begin{itemize}
\item[1)] $Y(n).$ is $n$-split for all $n$
\item[2)] $Y(0). = X.$ (note that {\it any} simplicial variety is $0$-split)
\item[3)] For all $n \geq 1$, $f_n$ is a proper hypercover
\item[4)] For all $n \geq 1$, ${\rm sk}_{n-1} (f_n)$ is an isomorphism
\item[5)] The natural map
$$
Y(n). \to {\rm Cosk}_n^{X.} (Y(n).).
$$
is an isomorphism. (Notice that this is trivially true for $n=0$.)
\end{itemize}
Having constructed such a tower of maps, set (for $n \geq 1$) $\varphi_n : Y(n) \to Y(0) = X.$ equal to $f_1 \circ \ldots \circ f_n$, and then define $Y. = \underset{n}{\varprojlim} \, Y(n)$, and $\varphi : Y. \to X.$ equal to $\underset{n}{\varprojlim} \, \varphi_n$. Notice that the natural map $Y. \to Y. (n)$ induces an isomorphism ${\rm sk}_n (Y.) \to {\rm sk}_n (Y(n).)$, and hence $\varphi : Y. \to X.$ is a proper hypercover with $Y.$ split. The induction has already been started by setting $Y(0).$ equal to $X.$

Now suppose that $n \geq 1$ and that $Y(k).$ and $f_k$ have been defined for $k < n$ (if $n=1$, then we only need the existence of $Y(0). = X.$), satisfying conditions 1)--5) above. We start by setting
$$
{\rm sk}_{n-1} \, Y(n). = {\rm sk}_{n-1} \, Y(n-1) \, .
$$

Following \cite{SGA4-2} 5.1.3 we know that, given a variety $N$ and a map
$$
\beta : N \to {\rm Cosk}_{n-1} ({\rm Sk}_{n-1} (Y(n)))_n
$$
there is, up to isomorphism, a unique split $n$-truncated variety $V.$ with $NV_n = N$, ${\rm sk}_{n-1} (V.) = {\rm sk}_{n-1} (Y(n).) = {\rm sk}_{n-1} (Y(n-1))$, and $\beta$ the restriction of the natural map $V_n \to {\rm cosk}_{n-1} ({\rm sk}_{n-1} (V.))$ to $NV_n$.

Thus constructing the split $n$-truncated simplicial object ${\rm sk}_n (Y(n))$ is equivalent to giving a variety $V_n$, and a map $V_n \to {\rm Cosk}_{n-1} ({\rm Sk}_{n-1} \, Y(n-1))_n$. By \cite{Deligne-Hodge-III} Proposition 6.2.4, to give a map
$$
{\rm sk}_n (f_n) : {\rm sk}_n (Y(n)) \to {\rm sk}_n (Y(n-1))
$$
is the same as giving morphisms:
$$
{\rm sk}_{n-1} (f_n) : {\rm sk}_{n-1} (Y(n)) \to {\rm sk}_{n-1} (Y(n-1))
$$
and
$$
\tilde f : N \to Y(n-1)_n = {\rm Cosk}_{n-1}^{X.} (Y(n-1))_n
$$
such that the diagram:
$$
\xymatrix{
N \ar[d]^{\tilde f} \ar[r]^{\beta} & {\rm Cosk}_{n-1} (Y(n))_n \ar@{=}[d] \\
{\rm Cosk}_{n-1}^{X.} (Y(n-1))_n \ar@{=}[d] \ar[r]^{\pi}   & {\rm Cosk}_{n-1} (Y(n-1))_n \\
{\rm Cosk}_{n-1} (Y(n-1))_n \times_{{\rm Cosk}_{n-1} (X.)_n} \, X_n
}
$$
commutes. Here $\pi$ is the projection onto the first factor in the fibre product.
Hence the map $\tilde f$ {\it determines} $\beta$, the split object ${\rm sk}_n (Y(n))$, and the map ${\rm sk}_n (f_n)$.

Let us choose $N$ and $\tilde f$ so that $\tilde f$ is proper and surjective; for example, take $\tilde f$ to be the identity. Having defined ${\rm sk}_n (Y(n).)$ we now set $Y(n). = {\rm cosk}_n^{X.} ({\rm sk}_n (Y(n).))$.

To define $f_n$, observe that, by \ref{props-cosk},
$$
Y(n-1). = {\rm Cosk}_{n-1}^{X.} (Y(n-1).) = {\rm Cosk}_n^X (Y(n-1)).
$$
and so we set $f_n = {\rm cosk}_n^{X.} ({\rm sk}_n (f_n))$.

It remains to show that $f_n$ is a proper hypercover. Since $(f_n)_p$ is an isomorphism for $p < n$, we know that
$$
Y(n)_p \to {\rm Cosk}_{p-1}^{Y(n-1)} \, (Y(n))_p
$$
is an isomorphism, hence proper and surjective for $p < n$. Now suppose $p \geq n$.

First observe that
\begin{multline*}
 {\rm Cosk}_{n-1}^{Y(n-1)} (Y(n))_n = \\
{\rm Cosk}_{n-1} (Y(n))_n \times_{{\rm Cosk}_{n-1} (Y(n-1))_n} Y(n-1)_n = Y(n-1)_n
\end{multline*}
since ${\rm sk}_{n-1} Y(n) \overset{\sim}{\longrightarrow} {\rm sk}_{n-1} Y(n-1)$, and so
$${\rm Cosk}_{n-1} (Y(n)) \simeq {\rm Cosk}_{n-1} (Y(n-1))\; .$$
However, $f(n)_n : Y(n)_n \to Y(n-1)_n$ is proper and surjective,
since $\beta$ is proper and surjective,
and the face (and hence also the degeneracy) maps of $Y(n-1)$ are proper by the induction hypothesis. Hence ${\rm sk}_n \, Y(n) \to {\rm sk}_n (Y(n-1))$ is a proper hypercover of $n$-truncated varieties (\ref{fiber}). We then conclude with the following lemma:

\begin{lemma}
\label{T}
Let $Y.$ and $Z.$ be $n$-truncated simplicial varieties over the simplicial variety $X.$, and suppose we are given a proper hypercover $f. : Y. \to Z.$ (over $X.$). Then
$$
{\rm cosk}_n^X (Y). \to {\rm cosk}_n^X (Z).
$$
is a proper hypercover.
\end{lemma}

\begin{proof} We have to prove the lifting property for all inclusions of finite simplicial sets $i : A \hookrightarrow B$. Suppose then that we are given a commutative diagram, with $S = {\rm Spec} (F)$ for $F$ an algebraically closed field:
$$
\xymatrix{
A \times S \ar@{^{(}->}[d] \ar[r] & {\rm Cosk}_n^X (Y.) \ar[d]  \\
B \times S \ar[r] \ar@{-->}[ur]^{\tilde\theta} &{\rm Cosk}_n^X (Z.) \\
} \leqno (*)
$$
This is then equivalent to giving a commutative diagram
$$
\xymatrix{
{\rm sk}_n (A) \times S \ar[d] \ar[r] & Y. \ar[d]  \\
{\rm sk}_n (B) \times S \ar[d] \ar[r] \ar@{-->}[ur]^{\theta} &Z. \ar[d] \\
B \times S \ar[r] &X.
}
$$
and hence a lifting $\theta$ exists, since $Y. \to Z.$ is a proper hypercover, and $\theta$ defines a lifting $\tilde\theta$ in diagram (*). To see that, for every valuation ring $\Lambda$, with fraction field $F$, a lifting $\tilde\psi$ exists in every diagram, with $i : A \hookrightarrow B$ as above,
$$
\xymatrix{
A \times {\rm Spec} (\Lambda) \cup B \times {\rm Spec} (F) \ar@{^{(}->}[d] \ar[r] & {\rm Cosk}_n^X (Y.) \ar[d]  \\
B \times {\rm Spec} (\Lambda) \ar[r] \ar@{-->}[ur]^{\tilde\psi} &{\rm Cosk}_n^X (Z.) \\
}
$$
it is enough to produce a lifting $\psi$ in the diagram:
$$
\xymatrix{
{\rm sk}_n (A) \times {\rm Spec} (\Lambda) \cup {\rm sk}_n \, B \times {\rm Spec} (F) \ar@{^{(}->}[d] \ar[r] & Y. \ar[d]  \\
{\rm sk}_n (B) \times {\rm Spec} (\Lambda) \ar[d] \ar[r] \ar@{-->}[ur]^{\psi} &Z. \ar[d] \\
B \times {\rm Spec} (\Lambda) \ar[r] &X.
}
$$
which again exists since $Y. \to Z.$ is a proper hypercover.\end{proof}

\begin{prop}
\label{compact}
Let $X.$ be a split simplicial variety with proper face maps. Then there is a map of simplicial varieties $j. : X. \to \bar X.$ such that:
\begin{itemize}
\item[{\rm 1)}] For all $k \geq 0$, $j_k : X_k \hookrightarrow \bar X_k$ is an open immersion with $\bar X_k$ proper over $S$
\item[{\rm 2)}] $\bar X.$ is split.
\end{itemize}
\end{prop}

\begin{proof} We construct ${\rm sk}_n (\bar X.)$ by induction on $n \geq 0$. For $n=0$, we set $\bar X_0$ equal to a ``compactification'' of $X_0$ over $S$, which exists by Nagata's theorem.

Suppose now that $n \geq 1$, and that we have constructed ${\rm sk}_{n-1} (\bar X.)$, together with a map
$$
{\rm sk}_{n-1} (j.) : {\rm sk}_{n-1} (X.) \to {\rm sk}_{n-1} (\bar X.)
$$
satisfying 1) above.

Giving ${\rm sk}_n (\bar X.)$ is the same as giving $N \bar X_n$ and a map
$\bar\beta_n : N\bar X_n \to {\rm cosk}_{n-1} ({\rm sk}_{n-1} (\bar X.))_n$.
Since $X.$ is split, we have maps
$$
\beta_n : NX_n \to {\rm cosk}_{n-1} ({\rm sk}_{n-1} (X.))_n
$$
and
$$
{\rm cosk}_{n-1} ({\rm sk}_{n-1} (j.)) : {\rm cosk}_{n-1} ({\rm sk}_{n-1} (X.)) \to {\rm cosk}_{n-1} ({\rm sk}_{n-1} (\bar X.)) \, .
$$
By Nagata's theorem, there exists a factorization of ${\rm cosk}_{n-1} ({\rm sk}_{n-1} (j.)) \circ \beta_n$:
$$
\xymatrix{
NX_n \ar[d]^{\beta_n} \ar[rr]^{i_n} &&\overline{NX_n} \ar[d]^{\bar\beta_n}  \\
{\rm cosk}_{n-1} ({\rm sk}_{n-1} (X.)) \quad \ar[rr]^{{\rm cosk}_{n-1} ({\rm sk}_{n-1} (j)) } &&\quad {\rm cosk}_{n-1}({\rm sk}_{n-1} (\bar X.)) \\
}
$$
with $\bar\beta_n$ proper, and $i_n$ an open immersion.

By \cite{SGA4-2} there is then a unique  split $n$-truncated  simplicial variety ${\rm sk}_n (\bar X.)$ with ${\rm sk}_{n-1} ({\rm sk}_n (\bar X.)) = {\rm sk}_{n-1} (\bar X.)$ and $N\bar X_n$ equal to $\overline{NX_n}$. Then, following \cite{Deligne-Hodge-III}, the composition of $i_n$ with the inclusion $N \bar X_n \hookrightarrow \bar X_n$, induces a map
$$
{\rm sk}_n (j) : {\rm sk}_n (X.) \to {\rm sk}_n (\bar X.)
$$
with $j_n$ an open immersion, since it is a disjoint union of open immersions. This completes the induction step.\end{proof}

\section{Homological descent}

In the following, $E$ will be a covariant functor from the category of proper morphisms between schemes to the category of connective spectra.  This includes the case of functors to the category of
chain complexes of abelian groups concentrated in degrees $\geq 0$.

We assume in addition that $E$ is contravariant with respect to open immersions.
We suppose that the two types of functoriality are related as follows:
\begin{itemize}
\item[1.] If $j : U \subset X$ is an open subset with complement $Y = X \backslash U$, then:
$$
E(Y) \to E(X) \to E(U)
$$
is a (co-)fibration sequence.
\item[2.] If $j : U \subset X$ is an open subset, and $f : Z \to X$ is proper, the diagram:
$$
\xymatrix{
E(Z)\ar[d]_{f_*}\ar[r]^{\!\!\!\!\!\!\!\!\!\!(j\vert_Z)^*} &E(f^{-1} (U)) \ar[d]^{(f\vert_U)_*}  \\
E(X)\ar[r]_{j^*}  &E(U) \\
}
$$
commutes.
\end{itemize}
Properties 1. and 2. imply that if $X$ and $Y$ are schemes then the inclusion maps of $X$ and $Y$ into the disjoint union $X\sqcup Y$ induce a weak equivalence 
$$E(X) \vee E(Y) \to E(X \cup Y).$$
We also assume that $E$ satisfies devissage:
\begin{itemize}
\item[3.] For all $X$, the natural map $j : X^{\rm red} \to X$ from the reduced structure on $X$ to $X$ induces a weak equivalence on $E$.
\end{itemize}

In this section we will prove that a functor $E$ having properties (1.)to (3.) above
satisfies descent with respect to hyperenvelopes. 

If in addition $E$ is contravariant with respect to finite flat morphisms, and satisfies properties (4.) and (5.) below, we shall show that $E$, with rational coefficients, satisfies descent with respect to all proper hypercovers.
\begin{itemize}
\item[4.] The diagram analogous to that in item (2.) above, in which $U\to X$ is finite and flat, is also strictly commutative.
\item[5.] If $p : X \to Y$ is finite and flat, with $p_* (\mathcal{O}_X)$ a {\it free} $\mathcal{O}_Y$-module of rank $n$, then
$$
p_* \, p^* : E_* (Y) \to E_* (Y)
$$
is multiplication by $n$.
\end{itemize}

The $K$-theory of coherent sheaves ($G$-theory), or the homology
theory corresponding to a cycle complex in the sense of Rost,
\cite{Rost-Chow-groups-with-coefficients}, and in particular the
homology theory associated to the Gersten complexes
\cite{Gillet-RR}, all satisfy properties (1.) to (5.).

In the paper \cite{Gillet-homological-descent}, descent for $K'$-theory with respect to hyperenvelopes was proved first for the homology of the Gersten complexes and then using a spectral sequence argument, for $K'$-theory.  Here we shall prove descent for the map on homology with rational coefficients induced by an arbitrary proper hypercover of simplicial schemes, following the method of SGA~4 \cite{SGA4-2}.

We start by extending $E$ from varieties to simplicial varieties:

\begin{defn}
If $X.$ is a simplicial variety with proper face maps, and $E$ is any covariant functor from the category of proper morphisms between schemes to the category of connective spectra, then we define:
$$
E(X.) := {\rm hocolim}_{i \in \Delta^{\rm op}} \, E(X_i) \, .
$$
This is clearly covariant functorial with respect to proper morphisms between simplicial schemes with proper face maps.
\end{defn}

\begin{lemma}
\label{DescentSpectralSequence}
Let $X.$ be a simplicial scheme with proper face maps, and suppose that $E$ is any covariant functor from the category of proper morphisms between schemes to the category of connective spectra. Then there is a (convergent) first quadrant homological spectral sequence:
$$
E_{p,q}^1 = E_p (X_q) \Rightarrow E_{p+q} (X.) \, .
$$
\end{lemma}

\begin{proof} This is the standard spectral sequence for hocolim \cite{BK}. \end{proof}

\begin{lemma} Suppose that $E$ is a covariant functor from the category of proper morphisms between schemes to the category of connective spectra which takes disjoint unions of schemes to direct sums of spectra  (in particular if $E$ satisfies properties (1.) and (2.)).  Then if $f,g : X. \to Y.$ are homotopic maps in the category of proper morphisms between simplicial schemes with proper face maps, $E(f) , E(g) : E(X.) \to E(Y.)$ are equal in the homotopy category of spectra.
\end{lemma}

\begin{proof} There is a map $h : X. \times \Delta^1 \to Y.$ such that $h |_{X.\times{0}} = f$ and $h |_{X.\times{1}} = g$.  It suffices to observe that the inclusions $X. \times \{ i \} \to X. \times \Delta^1$, for
$i = 0,1$, induce weak equivalences of $E$-theory spectra inverse to the weak equivalence induced by the projection $X. \times \Delta^1 \to X.$.  By Lemma~\ref{DescentSpectralSequence}, it is enough to
check that these maps  induce isomorphisms on the chain complexes obtained by applying the functor $E_q$ for all $q$.  But $E_q$ takes disjoint unions of schemes to direct sums of abelian
groups, and for any functor $H$ from schemes to abelian groups with this property, the maps of simplicial abelian groups (for $i = 0,1$):
$$
H (X. \times \{ i \}) \to H(X. \times \Delta^1) \simeq H(X.) \otimes \mathbf{Z} (\Delta^1) \to H(X.)
$$
are weak equivalences. \end{proof}

\begin{thm}
\label{descent}  Suppose that $E$ is a covariant functor from the category of proper morphisms between schemes to the category of connective spectra satisfying properties (1.) to (5.).
Let $f. : X. \to Y.$ be a proper hypercover between simplicial $S$-varieties, each having proper face maps. Then $E (f.)_\mathbb{Q} : E(X.)_\mathbb{Q} \to E(Y.)_\mathbb{Q}$ is a weak equivalence.
\end{thm}

There are several steps to the proof.

\begin{lemma}
\label{Lemme308}
Let $p : X \to Y$ be a proper morphism that admits a section $s: Y \to X$. Then for any covariant 
functor $E$ from the category of proper morphisms between schemes to the category of connective spectra which takes a disjoint union of schemes to a direct sum of spectra, in particular for any $E$ which satisfies properties (1.) and (2.) above, we have that:
$$
{\rm cosk}_0^Y (p) : E ({\rm cosk}_0^Y (X).) \to E (Y)
$$
is a weak equivalence.
\end{lemma}

\begin{proof} Let $Y.$ be the constant simplicial object $[n] \mapsto Y$.
Then $s$ induces a section $\tilde s : Y. \to {\rm cosk}_0^Y (X).$ of the projection ${\rm cosk}_0^Y (p)$.
On the other hand, by Lemma~\ref{Lemme212}, $ \tilde s \circ {\rm cosk}_0^Y (p) = {\rm cosk}_0^Y (s \circ p)$ is
homotopic to the identity and so we are done.
(Note that here we do not need to assume that $E$ has rational coefficients.) \end{proof}

\begin{lemma}
\label{Lemme309}
Let $p : X \to Y$ be a finite flat morphism, such that $p_* (\mathcal{O}_X) \simeq \mathcal{O}^d_Y$ for some $d \geq 0$. (Note that if $p$ is finite and flat  this will be true on a Zariski dense open subset of $Y$.) Then
$$
E ({\rm cosk}_0^Y(X).)_\mathbb{Q} \to E (Y)_\mathbb{Q}
$$
is a weak equivalence.
\end{lemma}

\begin{proof} Following Lemma~\ref{DescentSpectralSequence}, it is enough to prove that, for all
$q \geq {0}$,
$$
H_p (i \mapsto E_q ({\rm cosk}_i^Y (X))_\mathbb{Q}) \simeq \left\{
 \begin{array}{ll}
0, & \hbox{$i>0$;} \\
E_q(Y)_\mathbb{Q}, & \hbox{$i=0$.}
\end{array}
\right.
$$
Consider, therefore, the corresponding augmented chain complex $C_*$ concentrated in degrees $\geq -1$, with $C_n = E_q ({\rm cosk}^Y (X)_n)_\mathbb{Q}$ for $n \geq 0$,  $C_{-1} = E_q (Y)_\mathbb{Q}$, and differentials $\delta_n = \sum_{i=0}^n (-1)^i (d_i)_*$ for $n \geq 0$, where we set $d_0 = p : X = {\rm cosk}^Y (X)_0 \to {\rm cosk}^Y (X)_{-1} = Y$ when $n = 0$.

For all $n \geq 0$, and all $i \leq n$, one may easily check that the following square is cartesian, with all maps finite and flat :
$$
\begin{CD}
{\rm cosk}^Y (X)_{n+1} @>d_{n+1}>> {\rm cosk}^Y (X)_n \\
@Vd_iVV                      @VVd_iV \\
{\rm cosk}^Y (X)_n @>d_n>> {\rm cosk}^Y (X)_{n-1}
\end{CD}
$$
and hence that 
$$(d_n)^* (d_i)_* = (d_i)_* (d_{n+1})^* : E_q ({\rm cosk}^Y (X)_n) \to E_q({\rm cosk}^Y (X)_n).$$  
It is then straightforward to check that the sequence of maps, for $n \geq -1$ :
$$
h_n = \frac{(-1)^{(n+1)}}{d} (d_{n+1})^* : E_q ({\rm cosk}^Y (X)_n)_\mathbb{Q} \to E_q ({\rm cosk}^Y (X)_{n+1})_\mathbb{Q}
$$
is a contracting homotopy for the complex $C_*$, given that for all $n \geq -1$, $(d_{n+1})_* (d_{n+1})^* : E_q({\rm cosk}^Y (X)_n) \to E_q({\rm cosk}^Y (X)_n)$ is (by the projection formula) multiplication by $d$. \end{proof}

\begin{prop}
\label{proposition3010}
Let $p : X \to Y$ be a proper surjective morphism.  Then
$$
E ({\rm cosk}^Y (X).)_\mathbb{Q} \to E (Y)_\mathbb{Q}
$$
is a weak equivalence.
\end{prop}

\begin{proof} By devissage we may assume that $Y$ is reduced. We proceed by noetherian induction on the closed subsets of $Y$. The assertion is trivially true if $Y$ is empty. Now suppose that $Y$ is non-empty, and let $\eta \in Y$ be a generic point of a component of $Y$.  Then there exists a point $\xi \in X_\eta$ (the fiber over the point $\eta$) such that the residue field $\mathbf{k} (\xi)$ is a
finite extension of $\mathbf{k} (\eta)$.  Taking the Zariski closure $\bar{\xi}$ of $\xi$ in $X$, we obtain a subscheme of $X$ which is finite over an open neighborhood of $\eta$. By generic flatness, there is then an open neighborhood $U$ of $\eta$, the inverse image $V$ in $\bar{\xi}$ of which is finite and flat over $U$. Additionally, we may assume that  $p_* (\mathcal{O}_V) \simeq \mathcal{O}^d_U$.

By the induction hypothesis the assertion of the proposition is true for $p_Z : Z \times_Y X \to Z$, where $Z = Y \backslash U$.

Now consider the diagram:
$$
\begin{CD}
E ({\rm cosk}^Z (p^{-1} (Z)).)_{\mathbb Q} @>>> E ({\rm cosk}^Y (X).)_{\mathbb Q} @>>> E({\rm Cosk}^{p^{-1} (U)} (U).)_{\mathbb Q} \\
@VfVV  @VgVV @VhVV \\
E(Z)_{\mathbb Q} @>>> E(X)_{\mathbb Q} @>>> E(U)_{\mathbb Q} \, .
\end{CD}
$$
The map $f$ is a weak equivalence by the induction hypothesis. Next, observe that:
$$
E(Z)_{\mathbb Q} \to E(X)_{\mathbb Q} \to E(U)_{\mathbb Q}
$$
is a fibration sequence by localization, and similarly, for all $n \geq 0$,
$$
{\rm cosk}^Z (p^{-1} (Z))_n = {\rm cosk}^Y (X)_n \backslash {\rm cosk}^{p^{-1} (U)} (U)_n
$$
and so, again by localization, we have a fibration sequence of connective spectra
$$
E ({\rm cosk}^Z (p^{-1} (Z))_n) \to E ({\rm cosk}^Y (X)_n) \to E ({\rm cosk}^{p^{-1} (U)} (U)_n) \, .
$$
Hence, since by \cite{BK}, hocolim preserves fibration sequences of connective spectra, we have a fibration sequence:
$$
E ({\rm cosk}^{p^{-1} (Z)} (Z).)_{\mathbb Q} \to E ({\rm cosk}^Y (X).)_{\mathbb Q} \to E ({\rm cosk}^{p^{-1} (U)} (U).)_{\mathbb Q} \, .
$$
Hence, in order to prove that $E ({\rm cosk}^Y (X).)_{\mathbb Q} \to E (Y.)_{\mathbb Q}$ is a weak equivalence, it remains to show (writing $Y$ for $U$ and $X$ for $p^{-1} (U)$) that if $p : X \to Y$ is a proper surjective morphism such that there is a finite flat map $\pi : V \to Y$, and a section $s : V \to X$, {\it i.e.} such that $p \circ s = \pi$, then $p_* : E ({\rm cosk}^{Y} \cdot (X))_\mathbb{Q} \to E (Y)_\mathbb{Q}$ is a weak equivalence.

Consider the bisimplicial scheme ${\rm cosk}^Y (V). \times_Y {\rm cosk}^Y (X).$.
For each $n \geq 0$, $E ({\rm cosk}^Y (V). \times_Y {\rm cosk}^Y (X)_n)_\mathbb{Q} \to E ({\rm cosk}^Y (X)_n)_\mathbb{Q}$ is a weak equivalence by Lemma~\ref{Lemme308}.  Hence (by the homotopy colimit theorem \cite{BK}) $E ({\rm cosk}^Y (V). \times_Y {\rm cosk}^Y (X).)_\mathbb{Q} \to
E ({\rm cosk}^Y (X).)_\mathbb{Q}$ is too.

On the other hand, for each $m\geq 0$,
$$
{\rm cosk}^Y_m (V) \times_Y {\rm cosk}^Y (X). \simeq {\rm cosk}^{{\rm cosk}^Y_m(V)} (X \times_Y{\rm cosk}^Y (V)_m).
 $$
and since there is a section $V \to X$ over $Y$, there is a section
${\rm cosk}^Y_m (V) = \prod_Y^m (V) \to X \times_Y {\rm cosk}^Y_m (V)$.
Therefore, the augmentation
$$E ({\rm cosk}^Y (V)_m \times_Y{\rm cosk}^Y (X).) \to E({\rm cosk}^Y (V)_m)$$
is a weak equivalence
(even without tensoring with $\mathbb{Q}$), by \ref{Lemme308},
and so again by the homotopy colimit theorem \cite{BK}, we have a weak equivalence:
$E({\rm cosk}^Y (V). \times_Y {\rm cosk}^Y (X).) \to E({\rm cosk}^Y (V).)$.

Finally, by Lemma~\ref{Lemme309}, we know that $E ({\rm cosk}^Y (V).)_\mathbb{Q} \to
E(Y)_\mathbb{Q}$ is a weak equivalence, and it follows therefore that  $E({\rm cosk}^Y (X).)_\mathbb{Q} \to E(Y)_\mathbb{Q}$ is \break too. \end{proof}

\begin{proof}[Proof of Theorem~{\rm \ref{descent}}] Suppose that $f. : X. \to Y.$ is a proper hypercover of simplicial varieties with proper face maps. By \ref{props-cosk}, we know that the sequence of simplicial varieties:
$$
\ldots \to {\rm cosk}_n^{Y.} (X.) \to {\rm cosk}^{Y.}_{n-1}(X.)\to\ldots Y.
$$
satisfies properties 1)--3) of Lemma~\ref{props-tower}. \end{proof}

Since $E_q = 0$ for $q < 0$, it follows from the descent spectral sequence \ref{DescentSpectralSequence}, that if $p. : V. \to W.$ is a proper map between simplicial varieties with proper face maps, and there is an $n \geq 1$ such that $p_i$ is an isomorphism for $i \leq n$, then $E_q (V.) \to E_q (W.)$ is an isomorphism for $q < n$. It follows that for a given $q \geq 0$, the map
$$
E_q (X.) \to E_q ({\rm cosk}_n^{Y.} (X.).)
$$
is an isomorphism once $n > q$. It will therefore be sufficent to show, for all $n \geq 0$, that
$$
E({\rm cosk}_n^{Y.} (X.).)_{\mathbb Q} \to E ({\rm cosk}_{n-1}^{Y.} (X.).)_{\mathbb Q}
$$
is weak equivalence. First we need the following lemma:

\begin{lemma}
Let $f : X. \to Y.$ be a proper hypercover. Then for all $n \geq 0$ and all $p \geq 0$,
$$
(f_n)_p : {\rm Cosk}_n^{Y.} (X.)_p \to {\rm Cosk}_{n-1}^{Y.} (X.)_p
$$
is proper and surjective.
\end{lemma}

\begin{proof} We have to show that $(f_n)_p$ is surjective on $F$-valued points for all algebraically closed fields, and that the valuative criterion of properness holds. We shall only prove the valuative criterion, since the proof of surjectivity is similar, but easier.

Suppose then that we are given a commutative square, for $\Lambda$ a valuation ring, with fraction field $F$ :
$$
\xymatrix{
{\rm Spec} (F) \ar@{^{(}->}[d] \ar[r]^i & {\rm Cosk}_n^{Y.} (X.)_p \ar[d]^{(f_n)_p}  \\
{\rm Spec} (\Lambda) \ar[r] \ar@{-->}[ur]^{\tilde j} &{\rm Cosk}_{n-1}^{Y.} (X.)_p \\
} \, .
$$
Since
$$
{\rm Cosk}_n^{Y.} (X.)_p \cong {\rm Cosk}_n (X.)_p \times_{{\rm Cosk}_n (Y.)_p} Y_p
$$
the commutative square above is equivalent to a commutative diagram
$$
\xymatrix{
({\rm sk}_n (\Delta^p) \times {\rm Spec} (F)) \cup ({\rm sk}_{n-1} (\Delta^p) \times {\rm Spec} (\Lambda)) \ar@{^{(}->}[d] \ar[r] & X. \ar[d]^f  \\
{\rm sk}_n (\Delta^p) \times {\rm Spec} (\Lambda) \ar@{^{(}->}[d] \ar[r] \ar@{-->}[ur]^{\theta} &Y. \ar@{=}[d] \\
\Delta^p \times {\rm Spec} (\Lambda) \ar[r] &Y.
}
$$
and giving the lifting $\tilde j$ is equivalent to giving the map $\theta$, which exists since $f$ is a proper hypercover. \end{proof}

We now have that the map
$$
f_n : {\rm cosk}_n^{Y.} (X.). \to {\rm cosk}_{n-1}^{Y.} (X.).
$$
has the following properties (see \ref{props-cosk})
\begin{itemize}
\item[(1)] For all $p$, $(f_n)_p$ is proper and surjective.
\item[(2)] For $p < n$, $(f_n)_p$ is an isomorphism.
\item[(3)] For $p \geq n$, the natural map
$$
{\rm Cosk}_n^{Y.} (X) \to {\rm Cosk}_p^{Y.} ({\rm Cosk}_n^{Y.} (X))
$$
is an isomorphism.
\end{itemize}

Suppose now that $n \geq 0$ and that $f : V. \to W.$ is a map of simplicial varieties over $Y.$ with proper face maps satisfying (1)--(3) above. We claim that $E(V.)_{\mathbb Q} \to E(W.)_{\mathbb Q}$ is a weak equivalence. To see this, consider the bisimplicial scheme:
$$
([k] , [\ell]) \mapsto {\rm cosk}_0^{W_{\ell}} (V_{\ell})_k = V_{\ell} \times_{W_{\ell}} \times \ldots \times_{W_{\ell}} V_{\ell} \qquad \mbox{($k$ factors)}.
$$
By \cite{BK}, we have:
\begin{align}
&&\underset{\Delta^{\rm op} \times \Delta^p}{\rm hocolim} (([k] , [\ell]) \mapsto E({\rm cosk}_0^{W_{\ell}} (V_{\ell})_k)_{\mathbb Q}) \nonumber \\
&\simeq &\underset{\Delta^{\rm op}}{\rm hocolim} ([\ell] \mapsto \underset{\Delta^{\rm op}}{\rm hocolim} ([k] \mapsto E({\rm cosk}_0^{W_{\ell}} (V_{\ell})_k)_{\mathbb Q}) \nonumber \\
&\simeq &\underset{\Delta^{\rm op}}{\rm hocolim} ([k] \to \underset{\Delta^{\rm op}}{\rm hocolim} ([\ell] \mapsto E({\rm cosk}_0^{W_{\ell}} (V_{\ell})_k)_{\mathbb Q}) \, . \nonumber
\end{align}
Since $f_{\ell}$ is proper and surjective for all $\ell \geq 0$, we have that, for all $\ell \geq 0$ :
$$
E({\rm cosk}_0^{W_{\ell}} (V_{\ell}).)_{\mathbb Q} = \underset{\Delta^{\rm op}}{\rm hocolim} ([k] \mapsto E({\rm cosk}_0^{W_{\ell}} (V_{\ell}))_k)_{\mathbb Q} \to E (W_{\ell})_{\mathbb Q}
$$
is a weak equivalence (by Proposition~\ref{proposition3010}). Hence the natural map from the first iterated hocolim to $E(W.)_{\mathbb Q}$ is a weak equivalence.

On the other hand, consider, for a given $k \geq 1$, the face maps
$$
d_i^{W.} : {\rm cosk}_0^{W.} (V.)_k = W. \times_{V.} \times \ldots \times_{V.} W. \to {\rm cosk}_0^{W.} (V.)_{k-1} = W. \times_{V.} \times \ldots \times_{V.} W. \, .
$$
Each map $d_i^{W.}$ has a section, either $s_{i-1}^{W.}$ or $s_i^{W.}$. Also, since $V_p \to W_p$ is an isomorphism for $p < n$, the same is true for $d_i^{W_p}$. Furthermore, since $V. \to {\rm Cosk}_p^{W.} (V.)$ is an isomorphism for all $p \geq n$, it is straightforward to check that the same is true for ${\rm Cosk}_0^{W.} (V.)_k$ instead of $V.$ for all $k$. Thus, for $i < n$, the two maps $s_i^{W.} \, d_i^{W.} : {\rm Cosk}_0^{W.} (V.)_k \to {\rm Cosk}_0^{W.} (V.)_k$ and the identity are homotopic, by Lemma~\ref{existence-homotopies}. Similarly, if $i > 0$, the maps $s_{i-1}^{W.} \, d_i^{W.}$ and the identity are homotopic.

Applying the functor $E$, we see that $E(d_i^{W.})_{\mathbb Q}$ is an equivalence, independent of $i$, with inverse $E(s_i^{W.})_{\mathbb Q}$ and/or $E(s_{i-1}^W)_{\mathbb Q}$. It follows that the natural map
$$
V. \to {\rm Cosk}_0^{W.} (V.)..
$$
induces an equivalence on $E_{\mathbb Q}$, since for each $q \geq 0$,
$$
k \mapsto E_q (\underbrace{V. \times_{W.} \times \ldots \times_{W.} V.}_{\mbox{\footnotesize $k$-times}})
$$
is a constant simplicial group. Hence the map from $E(V.)_{\mathbb Q}$ to the second iterated hocolim above is also a weak equivalence, and the map
$$
E(V.)_{\mathbb Q} \to E(W.)_{\mathbb Q}
$$
is a weak equivalence. \end{proof}

\begin{thm}
\label{descent_hyperenvelope}
Suppose that $E$ is a covariant functor from proper morphisms between schemes to connective spectra which satisfies properties (1)-(3) but not necessarily properties (4) and (5). Then, if  $f. : X. \to Y.$ is a proper hyperenvelope between simplicial $S$-varieties, each having proper face maps, $E (f.) : E(X.) \to E(Y.)$ is a weak equivalence.
\end{thm}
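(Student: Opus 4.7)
The plan is to mirror the proof of Theorem~\ref{descent}, replacing each appeal to rational coefficients by a direct use of the defining property of an envelope. The essential new observation is this: if $p : X \to Y$ is an envelope and $\eta$ is a generic point of (reduced) $Y$, then surjectivity on $\mathbf{k}(\eta)$-valued points produces $\xi \in X$ with $p(\xi) = \eta$ and $\mathbf{k}(\xi) = \mathbf{k}(\eta)$. Spreading out the closure of $\xi$ gives a nonempty open $U \subseteq Y$ on which $p$ admits an honest section — no flatness or invertible degree required. This replaces the role of generic flatness combined with Lemma~\ref{Lemme309} in the proof of Proposition~\ref{proposition3010}, and in particular eliminates the need to invert any degree.

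Concretely, the first step is to establish the envelope analogue of Proposition~\ref{proposition3010}: for any envelope $p : X \to Y$, the augmentation $E({\rm cosk}_0^Y (X).) \to E(Y)$ is a weak equivalence with integral coefficients. After devissage (property~(3)) to $Y$ reduced, a noetherian induction on closed subsets of $Y$ combines the $U$-piece via Lemma~\ref{Lemme308} (section, no rationals), the complement $Z = Y \setminus U$ via the induction hypothesis (using that envelopes are stable under base change), and the localization fibration sequences of property~(1), together with the preservation of fibration sequences of connective spectra under hocolim (\cite{BK}). With this in hand, the rest of the proof of Theorem~\ref{descent} transposes essentially verbatim but with integral coefficients throughout: one reduces to showing that $E({\rm Cosk}_n^{Y.} (X.).) \to E({\rm Cosk}_{n-1}^{Y.} (X.).)$ is a weak equivalence for each $n \geq 0$, forms the bisimplicial scheme $([k] , [\ell]) \mapsto {\rm cosk}_0^{W_\ell} (V_\ell)_k$, swaps homotopy colimits, and invokes the integral envelope version of Proposition~\ref{proposition3010} fiberwise. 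The homotopies coming from $s_i^{W.} \circ d_i^{W.} \simeq \mathrm{id}$ were already integral via Lemma~\ref{existence-homotopies}, so no rational localisation is needed at the end either.

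The main obstacle in this transposition is verifying that each $(f_n)_p : {\rm Cosk}_n^{Y.} (X.)_p \to {\rm Cosk}_{n-1}^{Y.} (X.)_p$ is genuinely an envelope, i.e.\ surjective on $F$-valued points for \emph{every} field $F$, not merely for algebraically closed ones. This follows by rerunning the proof of the corresponding lemma for proper hypercovers given after Theorem~\ref{descent} with $F$ allowed to be arbitrary: the hyperenvelope lifting property is precisely what permits the $\mathrm{Spec}(F)$ and $\mathrm{Spec}(\Lambda)$ diagrams to be solved for arbitrary $F$. Once this envelope stability of the ${\rm Cosk}$-tower is confirmed — together with base-change stability of envelopes used in the induction — the descent argument closes up with integral coefficients, yielding the theorem.
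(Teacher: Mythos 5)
Your proposal is correct and matches the paper's argument: the key observation in both is that because $p$ is an envelope, over a nonempty open neighborhood of a generic point of reduced $Y$ it admits a genuine section, so Lemma~\ref{Lemme308} applies directly and the generic-flatness-plus-degree argument of Lemma~\ref{Lemme309} (the sole source of rationalization) is never invoked; the rest of the proof of Theorem~\ref{descent} then transposes verbatim with integral coefficients. Your additional remark that one should check envelope-stability of the maps $(f_n)_p$ in the coskeleton tower (by rerunning the lifting lemma with arbitrary fields $F$) is a point the paper leaves implicit, and it is correctly handled the way you describe.
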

\begin{proof} The proof of this result is essentially already in \cite{Gillet-homological-descent} and \cite{Gillet-Soule-motives-descent}, but for completeness we indicate how the proof differs from that of 
Theorem \ref{descent}.  The only difference is in the proof of the analog of Proposition \ref{proposition3010}, which states that if $p : X \to Y$ is an envelope, then
$$
E ({\rm cosk}^Y (X).) \to E (Y)
$$
is a weak equivalence. As in the proof of Proposition \ref{proposition3010},  using localization and noetherian induction on the closed subsets of $Y$ it is enough to know that for any $Y$ there is a non-empty Zariski open subset $U\subset Y$ such that 
$$
E ({\rm cosk}^U (p^{-1}(U)).) \to E (U)
$$
is a weak equivalence.  
However, since $p$ is an envelope, if $\eta$ is a  generic point of a component of $Y$ there is an open neighbourhood $U$ of $\eta$ such that the map $p_U:p^{-1}(U)\to U$ has a section, and hence the map above is a weak equivalence by Lemma \ref{Lemme308}.
\end{proof}

\section{Review of $K$-theory}

\subsection{The homology theory associated to $G$-theory}

As in \cite{Gillet-Soule-motives-descent}, we view the $K$-theory of coherent sheaves as a covariant
functor from the category of proper morphisms between schemes to the category of Waldhausen categories
(here we follow the terminology of \cite{Thomason_Trobaugh}, 1.2.3, rather than use the original terminology
``category with cofibrations and weak equivalences''), by sending $X$ to the category of bounded below complexes of
flasque sheaves of $\mathcal{O}_X$-modules which have bounded coherent cohomology;
see {\it op. cit.} 3.16. Indeed this is a strict functor, and composing
with the $K$-theory spectrum functor,
we get, as in \cite{Geisser-Hesselholt}, a covariant functor from the category of proper morphisms between
$S$-schemes to the category $\mbox{\textbf{Spectra}}$ of symmetric spectra of
\cite{Hovey-Shipley-Smith}. We then compose with the $\mathbb{Q}$-localization functor
(smash product with the Eilenbarg-Maclane spectrum $H\mathbb{Q}$)
$\mbox{\textbf{Spectra}} \to \mbox{\textbf{Spectra}}_\mathbb{Q}$, to obtain the
$K$-theory functor that we will use: $X. \mapsto \mathbf{G}(X.) \in \mbox{\textbf{Spectra}}_\mathbb{Q}$.
Note that we are following Thomason's notation ($G$-theory), rather than Quillen's ($K'$-theory).

Since we also want $G$-theory to be contravariant with respect to flat
(and in particular \'{e}tale) morphisms, it will be important to consider our complexes of sheaves to be
sheaves of $\mathcal{O}_X$-modules in the \emph{fppf} topology. Since being flasque is compatible with
flat base change, the resulting Waldhausen category is equivalent, and hence has the same $G$-theory,
as the category of complexes of sheaves in the Zarisksi topology.

We extend the $G$-theory functor first of all to the category of (degreewise)
proper morphisms between simplicial schemes with proper face maps by taking the homotopy
colimit of the associated simplicial spectra.
We may then extend one step further to the category $Ar (\mathbf{sP})$ with
objects proper morphisms $f. : Y. \to Z.$ between simplicial simplicial schemes
with proper face maps, and morphisms  $(g_Y, g_Z) : (f_1 : Y_1 \to Z_1) \to (f_2:Y_2 \to Z_2)$
pairs of proper maps such that $g_Z \circ f_1 = f_2\circ g_Y$, by setting $\mathbf{G} (f. : Y. \to Z.) := \mathrm{Cone} (\mathbf{G}(f.))$.

From theorem \ref{descent}, we have:
\begin{thm}\label{descent_for_Gthy}
Let $f. : X. \to Y.$ be a morphism between simplicial simplicial
schemes with proper face maps which is a  proper hypercover.  Then
$G(f.)$ is a weak equivalence.
\end{thm}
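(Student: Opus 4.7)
The plan is to reduce Theorem~\ref{descent_for_Gthy} directly to Theorem~\ref{descent}, applied levelwise in one of the two simplicial directions of the bisimplicial objects $X.$ and $Y.$, and then to invoke the homotopy colimit theorem of Bousfield--Kan to pass from a pointwise weak equivalence to a weak equivalence of the outer hocolims.

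First, I would unpack the condition that $f. : X. \to Y.$ be a proper hypercover of ``simplicial simplicial schemes.'' Writing $X_{n,m}$ and $Y_{n,m}$ for the bi-indexed components (with $n$ the outer simplicial degree and $m$ the inner one), the hypercover hypothesis asserts that for each outer $n \geq 0$ the canonical map
\[
X_n \longrightarrow \mathrm{Cosk}_{n-1}^{Y.}(X.)_n
\]
lies in the class $\mathcal{P}$ of proper, surjective maps of simplicial schemes (with proper face maps). Combined with Lemma~\ref{props-tower}, the resulting tower $\mathrm{Cosk}_n^{Y.}(X.) \to \mathrm{Cosk}_{n-1}^{Y.}(X.)$ consists, at each stage, of maps which are proper hypercovers of simplicial schemes in the inner direction.

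Second, I would apply Theorem~\ref{descent} in the inner simplicial direction: at each outer degree $n$, the induced map $f_n : X_n \to Y_n$ of simplicial schemes (with proper face maps) is, by the above unpacking, a proper hypercover, so Theorem~\ref{descent} gives a weak equivalence $\mathbf{G}(X_n)_{\mathbb{Q}} \to \mathbf{G}(Y_n)_{\mathbb{Q}}$, where each $\mathbf{G}(X_n)$ is itself realized as the hocolim over the inner $\Delta^{\mathrm{op}}$ of the $G$-theory spectra of the schemes $X_{n,m}$.

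Third, since $\mathbf{G}(X.)$ is by definition the hocolim over the outer $\Delta^{\mathrm{op}}$ of the simplicial spectrum $[n] \mapsto \mathbf{G}(X_n)$, and similarly for $Y.$, the pointwise weak equivalence established in the previous step passes through the outer hocolim by the standard homotopy colimit theorem of Bousfield--Kan. This yields the weak equivalence $\mathbf{G}(f.) : \mathbf{G}(X.) \to \mathbf{G}(Y.)$ claimed.

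I expect the main obstacle to be purely bookkeeping: one must carefully verify that the hypercover condition for $f.$ as a morphism in the category of simplicial objects in $\mathbf{sP}$ really does reduce at each outer level $n$ to the hypothesis of Theorem~\ref{descent} for a proper hypercover of simplicial schemes. This is a matter of checking the compatibility of the two $\mathrm{Cosk}$ constructions (in the inner and outer directions), for which Lemma~\ref{props-cosk} provides the needed interchange isomorphisms. Once this identification is in place, the result is an immediate application of Theorem~\ref{descent} together with the pointwise invariance of hocolim.
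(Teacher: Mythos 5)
The paper's own proof is a one-liner: it simply invokes Theorem~\ref{descent} with $E=\mathbf{G}$, the rational $G$-theory functor, which was already noted in Section~3 to satisfy hypotheses (1)--(5). The occurrence of ``simplicial simplicial schemes'' in the statement is almost certainly a typographical duplication of ``simplicial schemes'' (matching the hypothesis of Theorem~\ref{descent}, which is about simplicial $S$-varieties with proper face maps, and matching the way the result is actually applied later in the proof of Lemma~\ref{M-descent}). The sentence ``From theorem \ref{descent}, we have:'' immediately preceding the statement is the entire argument.

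Your proposal instead takes ``simplicial simplicial'' literally and attempts a bisimplicial reduction, and this is where a genuine gap appears. You unpack the hypercover condition in the outer simplicial direction, so that $X_n \to \mathrm{Cosk}^{Y.}_{n-1}(X.)_n$ lies in the class $\mathcal{P}$ of proper surjective maps of (inner) simplicial schemes, and then claim that this implies that each levelwise map $f_n : X_n \to Y_n$ of inner simplicial schemes is itself a proper hypercover, to which Theorem~\ref{descent} may be applied. This inference is false. By the corollary following the lifting-property characterization of $\mathcal{P}$-hypercovers, an outer hypercover only guarantees that each $f_n$ belongs to $\mathcal{P}$, i.e.\ is proper and surjective as a map of simplicial schemes (hence $f_{n,m}$ is proper and surjective for every $m$). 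It does \emph{not} guarantee that $X_{n,m} \to \mathrm{Cosk}^{Y_{n,\bullet}}_{m-1}(X_{n,\bullet})_m$ is proper and surjective for all $m$, which is what ``inner proper hypercover'' requires. Without that, the levelwise application of Theorem~\ref{descent} in your second step is unjustified, and your third step (pushing a pointwise equivalence through the outer hocolim) inherits the gap. The correct route is the paper's: read the statement as concerning simplicial schemes and observe that it is precisely Theorem~\ref{descent} specialized to $E=\mathbf{G}$.
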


\subsection{$G$-theory of stacks}

\begin{defn}
If $\mathfrak{X}$ is a stack, we define the $G$-theory spectrum $\mathbf{G}(\mathfrak{X})$ to be $\mathbf{H}_{\text{\'et}}(\mathfrak{X},\mathbf{G})$,
where $\mathbf{G}$ is the presheaf of ($\mathbf{Q}$-localized) spectra on the \'etale site induced by the functor $\mathbf{G}$.
See {\rm \cite{Gillet-Intersection-theory-on-stacks}}, though here we are using the hypercohomology of (pre-)sheaves of spectra in the sense of
{\rm \cite{JardinePresheavessymmetricspectra}}.
\end{defn}

It follows from \'etale descent for rational $G$-theory (due to
Thomason, (Theorem 2.15 of \cite{AKTEC}) that this definition is
consistent with the definition for schemes.  In particular, we have:
\begin{lemma}
If $\pi:V.\to \mathfrak{X}$ is an \'{e}tale hypercover of a
Deligne-Mumford  stack, the natural map:
$$G(\mathfrak{X})\to \mathrm{holim}_i(G(V_i))$$
is a weak equivalence.
\end{lemma}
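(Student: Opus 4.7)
The plan is to combine two ingredients that are essentially already assembled in the paragraph preceding the lemma: the hyperdescent property of Jardine's hypercohomology of presheaves of spectra, and Thomason's \'etale descent theorem for rationalized $G$-theory on schemes.

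By construction, $G(\mathfrak{X}) = \mathbf{H}_{\text{\'et}}(\mathfrak{X}, \mathbf{G})$ is the value on the terminal object of a globally fibrant replacement $\mathbf{G}^{\mathrm{fib}}$ of the presheaf $\mathbf{G}$ in Jardine's local model structure on presheaves of symmetric spectra on the small \'etale site of $\mathfrak{X}$. A basic property of this model structure, indeed its \emph{raison d'\^etre}, is that for any hypercover $\pi : V. \to \mathfrak{X}$ in the \'etale topology the canonical map
$$
\mathbf{G}^{\mathrm{fib}}(\mathfrak{X}) \longrightarrow \mathrm{holim}_{[i] \in \Delta} \mathbf{G}^{\mathrm{fib}}(V_i)
$$
is a weak equivalence of spectra; this is the hyperdescent axiom that Jardine's model structure encodes, and it is exactly the output of the local-to-global spectral sequence for a fibrant object. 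Rewriting this equivalence as
$$
G(\mathfrak{X}) \simeq \mathrm{holim}_i \, \mathbf{H}_{\text{\'et}}(V_i, \mathbf{G})
$$
reduces the problem to identifying $\mathbf{H}_{\text{\'et}}(V_i, \mathbf{G})$ with $G(V_i)$ for each scheme $V_i$.

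This last identification is exactly the content of Thomason's \'etale descent theorem for rational $G$-theory, cited as Theorem~2.15 of \cite{AKTEC}: for every $S$-scheme $V_i$ the natural augmentation $G(V_i) \to \mathbf{H}_{\text{\'et}}(V_i, \mathbf{G})$ is a weak equivalence. These equivalences are functorial in $V_i$, so taking homotopy limits over $[i] \in \Delta$ preserves the equivalence, and the resulting composite is the natural map $G(\mathfrak{X}) \to \mathrm{holim}_i \, G(V_i)$ appearing in the statement.

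The only substantive work is bookkeeping: one must check that the map produced by this two-step comparison coincides with the canonical augmentation that the lemma refers to, and that the comparison maps are functorial enough in the simplicial index to commute with the formation of $\mathrm{holim}$. Both points follow directly from the naturality of fibrant replacement in Jardine's model structure and from the naturality of Thomason's comparison, so no genuine obstacle arises once the formalism is in place.
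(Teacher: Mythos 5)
Your argument is essentially the paper's own: the lemma is stated there as an immediate consequence of defining $\mathbf{G}(\mathfrak{X})$ as Jardine-style \'etale hypercohomology (whose fibrant objects satisfy hyperdescent) together with Thomason's rational \'etale descent theorem identifying $\mathbf{H}_{\text{\'et}}(V_i,\mathbf{G})$ with $G(V_i)$ for schemes, which is exactly the two-step comparison you spell out. So the proposal is correct and matches the paper's (implicit) proof, with somewhat more detail on the model-categorical bookkeeping.
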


\begin{lemma}
\label{localization}
Let ${\mathfrak Y}$ be a closed substack of the stack ${\mathfrak X}$, with complement ${\mathfrak U}$. Then there is a fibration sequence
$$
{\mathbf G}({\mathfrak Y}) \overset{i_*}{\longrightarrow} {\mathbf G}({\mathfrak X}) \overset{j^*}{\longrightarrow} {\mathbf G}({\mathfrak U})
$$
in which $i : {\mathfrak Y} \to {\mathfrak X}$ and $j : {\mathfrak U} \to {\mathfrak X}$ are the inclusions.
\end{lemma}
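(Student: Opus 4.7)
The plan is to reduce the statement for stacks to the known localization sequence for schemes by using an \'etale hypercover and the preceding lemma.

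First, by Theorem~\ref{proper} (or more simply by taking an \'etale atlas and coskeletizing), choose an \'etale hypercover $\pi : V_\bullet \to \mathfrak{X}$ by a simplicial scheme. Pulling back along the closed immersion $i : \mathfrak{Y} \hookrightarrow \mathfrak{X}$ and the open immersion $j : \mathfrak{U} \hookrightarrow \mathfrak{X}$ produces \'etale hypercovers
$$
V_\bullet^{\mathfrak{Y}} := V_\bullet \times_{\mathfrak{X}} \mathfrak{Y} \longrightarrow \mathfrak{Y}, \qquad V_\bullet^{\mathfrak{U}} := V_\bullet \times_{\mathfrak{X}} \mathfrak{U} \longrightarrow \mathfrak{U},
$$
and since $i$ is a closed immersion (resp.\ $j$ an open immersion), each $V_n^{\mathfrak{Y}}$ is a closed subscheme of $V_n$ with complement $V_n^{\mathfrak{U}}$.

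Next, by the preceding lemma we have weak equivalences
$$
\mathbf{G}(\mathfrak{X}) \simeq \mathrm{holim}_{[n] \in \Delta} \, \mathbf{G}(V_n), \quad \mathbf{G}(\mathfrak{Y}) \simeq \mathrm{holim}_{[n]} \, \mathbf{G}(V_n^{\mathfrak{Y}}), \quad \mathbf{G}(\mathfrak{U}) \simeq \mathrm{holim}_{[n]} \, \mathbf{G}(V_n^{\mathfrak{U}}),
$$
compatibly with the maps induced by $i_*$ and $j^*$. For each $n$, Quillen's localization theorem for the scheme $V_n$ (as formulated in the $G$-theory spectrum setting in, e.g., \cite{Thomason_Trobaugh}) gives a fibration sequence
$$
\mathbf{G}(V_n^{\mathfrak{Y}}) \xrightarrow{(i_n)_*} \mathbf{G}(V_n) \xrightarrow{(j_n)^*} \mathbf{G}(V_n^{\mathfrak{U}}),
$$
and these are natural in $[n]$. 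Since homotopy limits preserve fibration sequences of spectra levelwise, taking $\mathrm{holim}_{[n]}$ yields the desired fibration sequence
$$
\mathbf{G}(\mathfrak{Y}) \xrightarrow{i_*} \mathbf{G}(\mathfrak{X}) \xrightarrow{j^*} \mathbf{G}(\mathfrak{U}).
$$

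The main (mild) obstacle is verifying that the pullback $V_\bullet^{\mathfrak{Y}} \to \mathfrak{Y}$ (and similarly for $\mathfrak{U}$) is again an \'etale hypercover, so that the preceding lemma applies. This follows from the fact that \'etale hypercovers are stable under arbitrary base change in the \'etale topos, together with the observation that each $V_n^{\mathfrak{Y}} \to \mathfrak{Y}$ remains a scheme (since $V_n \to \mathfrak{X}$ is representable and $\mathfrak{Y} \hookrightarrow \mathfrak{X}$ is representable). A minor secondary point is naturality of the localization sequence on $V_n$ with respect to the simplicial structure maps of $V_\bullet$, which is immediate from the strict functoriality of the $\mathbf{G}$-spectrum construction recalled in the preceding subsection.
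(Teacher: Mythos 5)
Your argument is correct and is in fact the alternative proof the paper itself sketches in the final sentence of its own argument: pull back an \'etale hypercover, apply levelwise localization for schemes, and use that holim preserves fibration sequences (the paper's primary route works instead at the level of the presheaf $\mathbf{G}_{\mathfrak X}$ on the \'etale site, deducing the identification $\mathbf{R\Gamma}_{\mathfrak Y}(\mathbf G)\approx i_*\mathbf G_{\mathfrak Y}$). One small slip to fix: Theorem~\ref{proper} produces a proper surjective cover, not an \'etale hypercover, so your parenthetical --- coskeletizing an \'etale atlas --- is the correct source of $V_\bullet$.
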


\begin{proof} Since ${\mathbf G}({\mathfrak X})$ is the hypercohomology ${\mathbf R \Gamma} ({\mathfrak X} , {\mathbf G}_{\mathfrak X})$ of the presheaf ${\mathbf G}_{\mathfrak X} : V \mapsto G(V)$ on the \'etale site of ${\mathfrak X}$, it suffices to observe that for each \'etale morphism $p : V \to {\mathfrak X}$ from a scheme $V$ to ${\mathfrak X}$, we have a localization fibration sequence
$$
{\mathbf G} (V \times_{\mathfrak X} {\mathfrak Y}) \to {\mathbf G}(V) \to {\mathbf G}(V \times_{\mathfrak X} {\mathfrak U})
$$
and hence an isomorphism
$$
{\mathbf R \Gamma}_{\mathfrak Y} ({\mathbf G}) \approx i_* \, {\mathbf G}_{\mathfrak Y} \, .
$$
Note that this also follows easily from the previous lemma and the fact that Holim preserves fibration sequences.
\end{proof}

The functoriality of $\mathbf{G}$ for morphisms of stacks is a more complicated question.
In \cite{Gillet-Intersection-theory-on-stacks} pushforward maps were only constructed for proper
{\it representable} morphisms of stacks.  However we may replace stacks by simplicial varieties, as follows:

Let $p : X. \to \mathfrak{X}$ be a proper morphism to a stack from  a simplicial variety with proper face maps.
Let $\pi : V. \to  \mathfrak{X}$ be an \'etale hypercover of $\mathfrak{X}$,
such as the nerve ${\rm Cosk}^{\mathfrak{X}}_0(V)$ of an \'etale presentation $V \to \mathfrak{X}$. Then we have a commutative square:
$$
\begin{CD}
X. @<\tilde{\pi}<< X.\times_\mathfrak{X} V. \\
@VpVV  @V\tilde{p}VV \\
\mathfrak{X} @<{\pi}<< V.
\end{CD}
$$
For each $i$, since $X.\times_\mathfrak{X} V_i\to V_i$ is proper, we have a map
$$\mathbf{G}(X.\times_\mathfrak{X} V_i)=\mathrm{hocolim}_j(\mathbf{G}(X_j\times_\mathfrak{X} V_i))\to \mathbf{G}(V_i)$$
which is
contravariant with respect to $i$ (since we have constructed $G$ to be strictly compatible with flat base change).
Therefore, we get a diagram:
$$
\begin{CD}
\mathbf{G}(X.) @>\tilde{\pi}^*>> \mathrm{holim}_i \mathbf{G}(X.\times_\mathfrak{X} V_i) \\
@.  @V\tilde{p}_*VV \\
\mathbf{G}(\mathfrak{X}) @>{\pi}^*>> \mathrm{holim}_i(\mathbf{G}(V.))
\end{CD}
$$
in which the bottom horizontal arrow is a weak equivalence.  Hence we get a map (up to homotopy) $p_*:\mathbf{G}(X.)\to \mathbf{G}(\mathfrak{X})$,
which it is straightforward to check (again using the fact that $G$ is strictly compatible with flat base change)
does not depend on the choice of \'{e}tale hypercover $\pi:V.\to\mathfrak{X}$.  Furthermore, if $f.:Y.\to X.$ is a map of simplicial varieties,
using the fact that push forward commutes with flat base-change, we have that
$$ (p\cdot f)_*=p_*\cdot f_*:\mathbf{G}(Y.)\to  \mathbf{G}(\mathfrak{X})$$
in the rational stable homotopy category.

In order to show that this construction gives an extension of $\mathbf{G}$-theory from simplicial varieties to stacks, we need:
\begin{lemma}
Suppose that $p:X.\to {\mathfrak X}$ is a proper hypercover. Then $p_*:\mathbf{G}(X.)\to \mathbf{G}(\mathfrak{X})$ is a weak equivalence.

\end{lemma}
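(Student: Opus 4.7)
The plan is to reduce to Theorem~\ref{descent_for_Gthy} by choosing an \'etale hypercover of $\mathfrak{X}$ and pulling back. Choose an \'etale hypercover $\pi : V. \to \mathfrak{X}$ by a simplicial scheme (e.g.\ the nerve of an \'etale atlas). By the preceding lemma, $\pi^* : \mathbf{G}(\mathfrak{X}) \to \mathrm{holim}_i\, \mathbf{G}(V_i)$ is a weak equivalence, so it suffices to show that $\mathbf{G}(X.) \to \mathrm{holim}_i\, \mathbf{G}(V_i)$ (the composition $\pi^* \circ p_*$) is a weak equivalence; equivalently, to identify both vertical arrows in the commutative square displayed just above the lemma.

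For each fixed degree $i \geq 0$, consider the bisimplicial object and restrict to the $i$-th row. Viewing $V_i$ as a constant simplicial scheme, Lemma~\ref{base change hypercovers stacks} applied to the map $V_i \to \mathfrak{X}$ shows that
$$
p_{V_i} : X. \times_{\mathfrak{X}} V_i \to V_i
$$
is a proper hypercover of simplicial schemes. Because the face maps of $X. \times_{\mathfrak{X}} V_i$ are base changes of those of $X.$, they are proper. Applying Theorem~\ref{descent_for_Gthy} degreewise in $i$ yields weak equivalences $\mathbf{G}(X. \times_\mathfrak{X} V_i) \xrightarrow{\sim} \mathbf{G}(V_i)$, and $\mathrm{holim}_i$ preserves levelwise equivalences between towers of connective spectra, so
$$
\tilde{p}_* : \mathrm{holim}_i\, \mathbf{G}(X. \times_{\mathfrak{X}} V_i) \overset{\sim}{\longrightarrow} \mathrm{holim}_i\, \mathbf{G}(V_i).
$$

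It remains to show that the upper horizontal arrow $\tilde{\pi}^*: \mathbf{G}(X.) \to \mathrm{holim}_i\, \mathbf{G}(X. \times_{\mathfrak{X}} V_i)$ is a weak equivalence. For each simplicial degree $j$ of $X.$, the base change $X_j \times_{\mathfrak{X}} V. \to X_j$ is an \'etale hypercover of the scheme $X_j$, and Thomason's \'etale descent for rational $\mathbf{G}$-theory gives $\mathbf{G}(X_j) \xrightarrow{\sim} \mathrm{holim}_i\, \mathbf{G}(X_j \times_{\mathfrak{X}} V_i)$. Taking $\mathrm{hocolim}$ in $j$ on both sides gives the claim, provided we may interchange the $\mathrm{hocolim}_j$ with $\mathrm{holim}_i$; this is the main technical obstacle. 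Rationally one is working in $\mathbf{Spectra}_\mathbb{Q}$, which is equivalent to the derived category of $\mathbb{Q}$-vector spaces, and by the Bousfield--Kan spectral sequence for the bisimplicial spectrum $(i,j) \mapsto \mathbf{G}(X_j \times_{\mathfrak{X}} V_i)$, the interchange follows from strong convergence in this setting (the relevant $E_2$-page is an iterated homology of simplicial $\mathbb{Q}$-vector spaces, where hocolim and holim commute).

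Combining the three weak equivalences with the commutativity of the square constructed before the lemma yields that $p_* : \mathbf{G}(X.) \to \mathbf{G}(\mathfrak{X})$ is a weak equivalence, as desired.
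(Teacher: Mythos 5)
Your argument is structurally quite different from the paper's, and it has a genuine gap at precisely the point you flag as ``the main technical obstacle.''

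The paper does \emph{not} try to show directly that $\tilde\pi^*$ and $\tilde p_*$ in the displayed square are separately weak equivalences for an arbitrary \'etale hypercover $V.\to\mathfrak{X}$. Instead it proceeds by noetherian induction on $\mathfrak{X}$ (using localization, Lemma~\ref{localization}, on both sides), reducing to the case where $\mathfrak{X}=[V/\Gamma]$ is a quotient stack with $V\to\mathfrak{X}$ a \emph{finite} \'etale Galois cover. In that case $\mathrm{Cosk}_0^{\mathfrak{X}}(V)$ is the bar construction $(E.\Gamma\times V)/\Gamma$; since each $X_j\times_{\mathfrak{X}}V\to X_j$ is finite \'etale Galois with group $\Gamma$, and we are working rationally, the homotopy limit over the cosimplicial direction collapses to the $\Gamma$-invariants of $\mathbf{G}(X.\times_{\mathfrak{X}}V)$, and the transfer divided by $\#\Gamma$ provides an explicit inverse. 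No interchange of $\mathrm{hocolim}$ and $\mathrm{holim}$ over infinite diagrams is ever required.

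Your route instead asks to exchange $\mathrm{hocolim}_{j\in\Delta^{\mathrm{op}}}$ with $\mathrm{holim}_{i\in\Delta}$ for the bicosimplicial/bisimplicial spectrum $(i,j)\mapsto\mathbf{G}(X_j\times_{\mathfrak{X}}V_i)$. This interchange is not automatic, even in $\mathbf{Spectra}_{\mathbb{Q}}$. Concretely, on the chain-complex level you are comparing a ``direct-sum-in-$j$ of product-in-$i$'' totalization with a ``product-in-$i$ of direct-sum-in-$j$'' totalization, and for a bicomplex that is unbounded in both directions these need not agree. The appeal to ``strong convergence of the Bousfield--Kan spectral sequence'' is exactly what needs to be established and is not true for a general \'etale hypercover of a general DM stack: the cosimplicial direction can contribute in unbounded degrees and the spectral sequence for $\mathrm{holim}$ can fail to converge. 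What makes the exchange valid in the paper is the extra structure of a \emph{finite} group action (so the $\mathrm{holim}$ is a homotopy fixed-point spectrum of a finite group acting on a rational spectrum, which is equivalent to the naive fixed points via transfer), and this is available only after the noetherian-induction reduction to a quotient stack. So the missing idea in your proof is precisely that reduction; without it, the interchange step is unjustified and the argument is incomplete.
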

\begin{proof}
Since both the $G$-theory of $X.$ and of $\mathfrak{X}$ are compatible with localization,
we may proceed  by noetherian induction on $\mathfrak{X}$.
By Proposition~\ref{Proposition113}, every Deligne-Mumford stack has a
non-empty dense open set which is a quotient stack.  It therefore suffices to show that if $\mathfrak{X}=[V/\Gamma]$ for $\Gamma$ a finite group, then
$\mathbf{G}(X.)\to \mathbf{G}(X.\times_\mathfrak{X}\mathrm{Cosk}^\mathfrak{X}_0(V))$ is a weak equivalence.
However $\mathrm{Cosk}^\mathfrak{X}_0(V))$ is just the bar construction $(E.\Gamma\times V)/\Gamma $ for the action of $\Gamma$ on $V$, hence
$$X.\times_\mathfrak{X}\mathrm{Cosk}^\mathfrak{X}_0(V)\simeq (E.\Gamma\times (X.\times_\mathfrak{X} V))/\Gamma\; .$$
It is then straightforward to check, since we are taking $G$-theory with rational coefficients, and since
for each $j$,
$X_j\times_\mathfrak{X}V\to X_j$ is a \emph{finite} \'{e}tale Galois cover with group $\Gamma$, that
$$\mathrm{holim}_j(j\mapsto \mathbf{G}(E_j\Gamma\times (X_j\times_\mathfrak{X} V))/\Gamma)$$
is simply the $\Gamma$-invariants of $\mathbf{G}(X.\times_\mathfrak{X}V)$ and the map
$$\mathbf{G}(X.)\to \mathrm{holim}_j(j\mapsto \mathbf{G}(E_j\Gamma\times (X_j\times_\mathfrak{X} V))/\Gamma)$$
is a weak equivalence with inverse induced by the transfer $\mathbf{G}(X.\times_\mathfrak{X}V)\to \mathbf{G}(X.)$ (divided by the order of $\Gamma$).
\end{proof}

\begin{thm}\label{stacks}
Let $f : \mathfrak{X} \to \mathfrak{Y}$ be a proper, not necessarily representable, morphism of stacks.
Then there exists a canonical map (in the homotopy category)
$$\mathbf{G}(\mathfrak{X})\to \mathbf{G}(\mathfrak{Y})$$ with the property that for any commutative square:
$$
\begin{CD}
X. @>\tilde{f}>> Y.\\
@V{p.}VV   @VV{q.}V\\
\mathfrak{X} @>{f}>> \mathfrak{Y}
\end{CD}
$$
in which $p.$ and $q.$ are proper morphisms with domains simplicial schemes with proper face maps,
we have a commutative square in the stable homotopy category:

$$
\begin{CD}
\mathbf{G}(X.) @>\tilde{f_*}>> \mathbf{G}(Y.)\\
@V{\mathbf{G}(p.)}VV   @VV{\mathbf{G}(q.)}V\\
\mathbf{G}(\mathfrak{X}) @>{\mathbf{G}(f)}>> \mathbf{G}(\mathfrak{Y})
\end{CD}
$$

\end{thm}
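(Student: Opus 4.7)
The plan is to define $\mathbf{G}(f)$ by choosing a proper hypercover $p.\colon X.\to \mathfrak{X}$ from a simplicial variety with proper face maps, and setting
\[
\mathbf{G}(f) := (f\circ p.)_{*}\circ (p_{*})^{-1}
\]
in the rational stable homotopy category. Such an $X.$ exists: by Theorem~\ref{proper} choose a proper surjection $X_{0}\to \mathfrak{X}$ with $X_{0}$ a variety, and take $X. := {\rm cosk}_{0}^{\mathfrak{X}}(X_{0})$; its face maps are base changes of a proper morphism and hence proper. The morphism $p_{*}\colon \mathbf{G}(X.)\to \mathbf{G}(\mathfrak{X})$ is a weak equivalence by the lemma immediately preceding the statement, and $(f\circ p.)_{*}$ is the pushforward attached to the proper morphism $f\circ p.\colon X.\to \mathfrak{Y}$ by the construction sketched before the statement.

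To show canonicity, given two proper hypercovers $p_{1}.\colon X_{1}.\to \mathfrak{X}$ and $p_{2}.\colon X_{2}.\to \mathfrak{X}$, I would form the fiber product $X_{3}. := X_{1}.\times_{\mathfrak{X}} X_{2}.$ with projections $\pi_{i}\colon X_{3}.\to X_{i}.$. By Lemma~\ref{base change hypercovers stacks} each $\pi_{i}$ is a proper hypercover, so $(\pi_{i})_{*}$ is a weak equivalence by Theorem~\ref{descent_for_Gthy}. Combined with the identity $(p_{i}\circ \pi_{i})_{*} = (p_{i})_{*}\circ (\pi_{i})_{*}$ furnished by the construction of pushforwards from simplicial varieties to stacks, and with $f\circ p_{1}\circ \pi_{1} = f\circ p_{2}\circ \pi_{2}$, one concludes that the two candidate definitions of $\mathbf{G}(f)$ coincide in the homotopy category.

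For the compatibility with a given square $(p.,q.,\tilde f)$, let $Z.\to \mathfrak{X}$ be the proper hypercover used to define $\mathbf{G}(f)$, and set $W. := X.\times_{\mathfrak{X}} Z.$, with projections $\alpha\colon W.\to X.$ and $\beta\colon W.\to Z.$. Lemma~\ref{base change hypercovers stacks} again makes $\alpha$ a proper hypercover, so $\alpha_{*}$ is a weak equivalence. Since $p\circ \alpha = Z\circ \beta$, the functoriality identity gives $p_{*}\circ \alpha_{*} = Z_{*}\circ \beta_{*}$, hence $p_{*} = Z_{*}\circ \beta_{*}\circ \alpha_{*}^{-1}$. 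Then
\[
\mathbf{G}(f)\circ p_{*} \;=\; (f\circ Z)_{*}\circ \beta_{*}\circ \alpha_{*}^{-1} \;=\; (f\circ p\circ \alpha)_{*}\circ \alpha_{*}^{-1} \;=\; (q\circ \tilde f\circ \alpha)_{*}\circ \alpha_{*}^{-1},
\]
and a last application of functoriality produces $q_{*}\circ \tilde f_{*}$.

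The one nontrivial step is verifying that the functoriality identity $(p\circ g)_{*}=p_{*}\circ g_{*}$ really does hold in the stable homotopy category for the pushforward constructed via the \'etale hypercover diagram preceding the statement; this is exactly what is asserted in that construction (and rests on strict compatibility of $G$-theory with flat base change together with the fact that holim preserves the relevant weak equivalences). Once this coherence is in hand, the rest of the argument is a formal diagram chase using the base-change property of proper hypercovers of stacks.
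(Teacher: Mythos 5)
Your proof is correct and follows the same strategy as the paper: define $\mathbf{G}(f)=(f\circ p.)_*\circ(p_*)^{-1}$ for a proper hypercover $p.\colon X.\to\mathfrak{X}$, use functoriality of pushforward for simplicial schemes over a stack to get independence of the choice, and run a fiber-product diagram chase for the compatibility square. The paper compresses the independence and compatibility steps into two sentences; your elaboration via $X_1.\times_\mathfrak{X}X_2.$ and $W.=X.\times_\mathfrak{X}Z.$ together with Lemma~\ref{base change hypercovers stacks} and Theorem~\ref{descent_for_Gthy} is precisely what those sentences are pointing at, and you correctly flag that the whole argument hinges on the coherence $(p\cdot g)_*=p_*\cdot g_*$ established in the construction preceding the theorem.
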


\begin{proof}
To define the map $f_*:\mathbf{G}(\mathfrak{X}) \to \mathbf{G}(\mathfrak{Y})$, pick any proper hypercover $p:X.\to \mathfrak{X}$, and set
$f_*=(f\cdot p)_*\cdot(p_*)^{-1}$.  Since $p_*$ is functorial with respect to $p:X.\to \mathfrak{X}$, the map $f_*$
does not depend on the choice of $p$.  Given a commutative square as above,
the fact that associated square of $G$-theory spectra is also commutative is a
consequence of the same functoriality for simplicial schemes over $\mathfrak{Y}$.
\end{proof}

\begin{defn}
We shall call a morphism of stacks $f : \mathfrak{X} \to {\mathfrak{Y}}$ a $\mathbf{G}$-equivalence if $\mathbf{G}(f)$ is an equivalence.
\end{defn}

\begin{thm}
\label{stack and coarse space have same k-theory}
If $\mathfrak{X}$ is a stack, the natural map $\mathfrak{X} \to {|\mathfrak{X}|}$  from the stack to its coarse
moduli space is a $\mathbf{G}$-equivalence.
\end{thm}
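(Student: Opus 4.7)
The plan is to mimic the proof of the lemma preceding Theorem~\ref{stacks}: noetherian induction on $\mathfrak{X}$ to reduce to a dense open quotient stack, followed by a direct computation via proper hypercovers using rational coefficients.

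For the induction, given a closed substack $\mathfrak{Y}\subset\mathfrak{X}$ with complement $\mathfrak{U}$, the coarse space $|\mathfrak{X}|$ contains $|\mathfrak{Y}|$ as a closed subspace with complement $|\mathfrak{U}|$, and Lemma~\ref{localization} (together with the analogous localization sequence for the algebraic space $|\mathfrak{X}|$) assembles the coarse moduli maps into a morphism of fibration sequences. By the five lemma and devissage, noetherian induction reduces the claim to exhibiting a single dense open $\mathfrak{U}\subset\mathfrak{X}$ for which $\mathfrak{U}\to|\mathfrak{U}|$ is a $\mathbf{G}$-equivalence. By Proposition~\ref{Proposition113} I take $\mathfrak{U}=[V/\Gamma]$ with $V$ quasi-projective over $S$ and $\Gamma$ finite, so $|\mathfrak{U}|=V/\Gamma$ is a quasi-projective scheme.

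The atlas $V\to[V/\Gamma]$ is representable finite \'etale, so its nerve $\mathrm{Cosk}^{[V/\Gamma]}_0(V).$ is a proper hypercover of $[V/\Gamma]$; the lemma preceding Theorem~\ref{stacks} identifies $\mathbf{G}([V/\Gamma])$ with $\mathbf{G}(\mathrm{Cosk}^{[V/\Gamma]}_0(V).)$, and the rational transfer argument from its proof further identifies this with the $\Gamma$-invariants $\mathbf{G}(V)^{\Gamma}_{\mathbb{Q}}$. Similarly the finite surjection $\pi:V\to V/\Gamma$ provides a proper hypercover $\mathrm{cosk}_0^{V/\Gamma}(V).$ of $V/\Gamma$, and Theorem~\ref{descent} gives $\mathbf{G}(V/\Gamma)_{\mathbb{Q}}\simeq\mathbf{G}(\mathrm{cosk}_0^{V/\Gamma}(V).)_{\mathbb{Q}}$. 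It thus remains to see that this also equals $\mathbf{G}(V)^{\Gamma}_{\mathbb{Q}}$ compatibly with the coarse moduli map. A further noetherian induction reduces to a dense open over which $\pi$ becomes \'etale Galois with group $\Gamma/H$, where $H\trianglelefteq\Gamma$ is the kernel of $\Gamma\to\mathrm{Aut}(V)$; there, property~(5) yields $\pi_*\pi^*=|\Gamma/H|$ rationally, giving $\mathbf{G}(V/\Gamma)_{\mathbb{Q}}\simeq\mathbf{G}(V)^{\Gamma/H}_{\mathbb{Q}}$, and since $H$ acts trivially on $V$ and hence on $\mathbf{G}(V)$, this agrees with $\mathbf{G}(V)^{\Gamma}_{\mathbb{Q}}$.

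The main obstacle is justifying the last reduction, i.e., producing a dense open of $V/\Gamma$ over which $\pi$ becomes \'etale Galois with group $\Gamma/H$. This requires first replacing $V$ by an irreducible component $V_0$ (with $\Gamma$ replaced by the stabilizer $\Gamma_0$ of $V_0$, using that $\mathfrak{X}$ irreducible forces $\Gamma$ to act transitively on components of $V$), then quotienting by the kernel $H$ of $\Gamma_0\to\mathrm{Aut}(V_0)$ which is automatically normal, and finally invoking upper semi-continuity of stabilizers to shrink $V_0$ to a dense open on which the faithful action of $\Gamma_0/H$ is free.
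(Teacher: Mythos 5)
Your proposal is correct and follows essentially the same route as the paper: reduce by noetherian induction and localization to a dense open quotient stack $[V/\Gamma]$, pass to an irreducible component with its stabilizer, quotient out the kernel $H$ acting trivially, shrink to a dense open on which $\Gamma/H$ acts freely, and compare both sides via the rational transfer argument. The paper compresses the geometric reduction into the single phrase ``every stack has a dense open subset which is of this type'' and says ``a standard transfer argument shows,'' whereas you spell out these steps; your justification of generic freeness (via faithfulness forcing each fixed subscheme $V_0^g$, $g\neq e$, to be proper closed in the irreducible $V_0$) is the correct way to fill that gap.
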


\begin{proof} Assume $G$ is a finite group acting on some affine scheme $U$, and
$H \subset G$ is a normal subgroup acting trivially on $U$ and such that $G/H$ acts freely on $U$. A standard transfer argument shows that the map $\mathbf{G}([U/G]) \to \mathbf{G}(U/G)$ is an equivalence.
Since every stack has a dense open subset which is of this type one can use localization and
noetherian induction to conclude the proof. \hfil $\Box$

\section{Weight complexes for varieties and stacks}

\subsection{ Introduction }

In this section we prove the extension of the main theorem of \cite{Gillet-Soule-motives-descent} for
varieties and stacks over $S$, where $S$ is a base scheme satisfying the conditions of the introduction.
In particular, this includes the cases $S = {\rm Spec} ({\mathcal O}_K)$  for ${\mathcal O}_K$
the ring of integers in a number field, and $S = {\rm Spec} (k)$ for $k$ a field of characteristic different from zero.

As in section 5 of {\it op. cit.} we shall use $K_0$-motives (but with rational coefficients)
rather than Chow motives. (Manin, in \cite{Manin}, seems to have been the first to mention using
$K_0$-motives.) The proofs in this section are often variations on those in \cite{Gillet-Soule-motives-descent}, but we shall give proofs again where the current situation merits it.

\subsection{$K_0$-correspondences}

\begin{defn}
If $X$ and $Y$ are regular, projective, $S$-varieties, we will write $KC_S (X,Y)$ for $G_0 (X \times_S Y)_{\mathbb Q}$, and call it the group of ``$K_0$-correspondences from $X$ to $Y$''.
\end{defn}

Note that $X \times_S Y$ is not in general regular; however since $Y$ is regular, ${\mathcal O}_Y$ is of finite global tor-dimension, and hence there is a bilinear product, given  $X$, $Y$ and $Z$ regular, projective, $S$-varieties :
$$
* : G_0 (X \times_S Y) \times G_0 (Y \times_S Z) \to G_0 (X \times_S Y \times_S Z)
$$
$$
([{\mathfrak F}], [{\mathfrak G}]) \mapsto \sum_{i \geq 0} (-1)^i [ {\mathcal T}\!\!or_i^{{\mathcal O}_Y} ({\mathfrak F} , {\mathfrak G})] \, .
$$
Composing with the direct image map ($p_{XZ} : X \times_S Y \times_Z Z \to X \times_S Z$ being the natural projection):
$$
(p_{XZ})_* : G_0 (X \times_S Y \times_S Z) \to G_0 (X \times_S Z)
$$
we get a bilinear pairing:
$$
G_0 (X \times_S Y) \times G_0 (Y \times_S Z) \to G_0 (X \times_S Z)
$$
and hence:
$$
KC_S (X,Y) \times KC_S (Y,Z) \to KC_S (X,Z) \, .
$$
The proofs of the following lemmas are straightforward so we omit them.

\begin{lemma}
Given regular varieties $X,Y,Z$ and $W$ projective over $S$, and elements $\alpha \in KC_S (X,Y)$, $\beta \in KC_S (Y,Z)$, $\gamma \in KC_S (Z,W)$ we have
$$
\gamma \circ (\beta \circ \alpha) = (\gamma \circ \beta) \circ \alpha \, .
$$
\end{lemma}

Given a morphism $f : X \to Y$ of regular varieties, projective over $S$, with graph $\Gamma_f \subset X \times_S Y$, we write $\Gamma (f)$ for the class $[{\mathcal O}_{\Gamma (f)}] \in KC_S (X,Y)$.

\begin{lemma}
If $f : X \to Y$ and $g : X \to Y$ are morphisms of regular varieties, projective over $S$, we have
$$
\Gamma (g \circ f) = \Gamma (g) \circ \Gamma (f) \, .
$$
Furthermore, if $X$ and $Y$ are regular projective varieties over $S$, and $\alpha \in KC_S (X,Y)$, then $\Gamma (1_Y) \circ \alpha = \alpha \circ \Gamma (1_X) = \alpha$.
\end{lemma}

\begin{defn}
We write $KC_S$ for the category with objects regular projective varieties over $S$, and homsets the $KC_S (X,Y)$
for $X$ and $Y$ objects in $KC_S$, and identity $\Gamma (1_X) \in KC_S (X,X)$ for each $X$.
\end{defn}

Clearly $\Gamma$ is a covariant functor from the category of regular projective varieties over $S$ to the
category $KC_S$. Note that $KC_S$ is a $\mathbb{Q}$-linear category.

Observe that $\Gamma$ extends to a functor from the category of simplicial varieties which, in each degree,
are regular and projective over $S$ to the category of chain complexes in $KC_S$ by associating (in the usual fashion) to $X.$ the complex
$$
n \mapsto \Gamma (X_n)
$$
with differential $\underset{i=0}{\overset{n}{\sum}} \, (-1)^i(d_i)_* : \Gamma (X_n) \to \Gamma (X_{n-1})$.

\begin{lemma}\label{G-theory-is-motivic}
The functors $G_i$ for $i \geq 0$, where $G_i (X)$ is $K_i$ of the category of coherent sheaves of ${\mathcal O}_X$ modules, factor through $\Gamma$.
\end{lemma}

\begin{proof} Give $\alpha \in KC_S (X,Y)$, we need to define $G_i (\alpha) : G_i (X) \to \, G_i (Y)$. We start by observing that for $X$ regular, $K_i (X) \simeq G_i (X)$, and so if $p_X : X \times_S Y \to X$ and $p_Y : X \times_S Y \to Y$ are the projective, we can define
$$
G_i (\alpha) : x \mapsto p_{Y*} (p_X^* (x) \cap \alpha)
$$
where
$$
\cap : K_i (X \times_S Y) \otimes G_0 (X \times_S Y) \to G_i (X \times_S Y)
$$
is the natural cap product.

Let $Z$ be a regular projective variety and $\beta \in KC_S (Y,Z)$. We want to show that
$$
G_i (\beta) \circ G_i (\alpha) = G_i (\beta \circ \alpha) \, . \leqno ({\rm B}).
$$

Consider the diagram of maps
$$
\xymatrix{
&&&X \times_S Y \times_S Z \ar[dll]_p \ar[d]^q \ar[drr]^r \\
&X \times_S Y \ar[dl]_{p_X} \ar[dr]^{p_Y}  &&Y \times_S Z \ar[dl]^{q_Y} \ar[dr]_{q_Z} &&X \times_S Z \ar[dl]^{r_Z} \ar[dr] ^{r_X}\\
X &&Y &&Z &&X
}
$$
and call $u : X \times_S Y \times_S Z \to X$ the obvious projection. If $x \in K_i (X)$ we have
$$
G_i (\beta) \circ G_i (\alpha) \, (x) = q_{Z*} (q_Y^* (p_{Y*} (p_X^* (x) \cap \alpha)) \cap \beta)
$$
and
\begin{align}
G_i (\beta \circ \alpha) (x) &= &r_{Z*} (r_X^* (x) \cap (r_* (\alpha * \beta))) \nonumber \\
&= &r_{Z*} (r_* (u^* (x) \cap (\alpha * \beta))) \qquad \mbox{(projection formula for $r$)} \nonumber \\
&= &q_{Z*} \, q_* (u^* (x) \cap (\alpha * \beta)) \, . \nonumber
\end{align}
Letting $\xi = p_X^* (x)$, it is enough to show that
$$
q_Y^* (p_{Y*} (\xi \cap \alpha)) \cap \beta = q_* (p^* (\xi) \cap (\alpha * \beta)) \, . \leqno ({\rm A})
$$

Let ${\mathcal F}^{\cdot}$ be a complex of coherent sheaves on $X \times_S Y$ which is acyclic with
respect to $p_Y$ and represents $\alpha \in G_0 (X \times_S Y)$. For any locally free sheaf ${\mathcal E}$ on $X \times_S Y$,
the complex ${\mathcal E} \otimes {\mathcal F}^{\cdot}$ is still acyclic with respect to $p_Y$ and
its derived direct image by $p_Y$ is $p_{Y*} ({\mathcal E} \otimes {\mathcal F}^{\cdot})$. On the other hand, let
$Z \subset {\mathbb P}_S^n$ be a projective embedding of $Z$ and ${\mathcal G}^{\cdot}$ a complex of coherent sheaves on
$Y \times_S {\mathbb P}_S^N$ which is flat on ${\mathcal O}_Y$ and acyclic outside $Y \times_S Z$,
and which represents $\beta$ in the $K$-theory with supports
$$
K_0^{Y \times_S Z} (Y \times_S {\mathbb P}_S^N) = G_0 (Y \times_S Z) \, .
$$
Let $\tilde q_Y : Y \times_S {\mathbb P}_S^N \to Y$,
$\tilde q : X \times_S Y \times_S {\mathbb P}_S^N \to Y \times_S {\mathbb P}_S^N$ and
$\tilde p : X \times_S Y \times_S {\mathbb P}_S^N \to X \times_S Y$ be the obvious projections. By flat base change we know that
$$
\tilde q_Y^* (p_{Y*} ({\mathcal E} \otimes {\mathcal F}^{\cdot})) = \tilde q_* \, \tilde p^* ({\mathcal E} \otimes {\mathcal F}^{\cdot}) \, .
$$
Therefore, by the projection formula for $\tilde q$, and since ${\mathcal G}$ is flat over ${\mathcal O}_Y$,
\begin{align}
\tilde q_Y^* (p_{Y*} ({\mathcal E} \otimes {\mathcal F}^{\cdot})) \underset{{\mathcal O}_{Y \times_S {\mathbb P}_S^N}}{\otimes} {\mathcal G}^{\cdot} &= & \tilde q_* (\tilde p^* ({\mathcal E} \otimes {\mathcal F}^{\cdot}) \underset{{\mathcal O}_{X \times_S Y \times_S {\mathbb P}_S^N}}{\otimes} \tilde q^* ({\mathcal G}^{\cdot})) \nonumber \\
&= &\tilde q_* (\tilde p^* ({\mathcal E}) \otimes \tilde p^* ({\mathcal F}^{\cdot}) \otimes \tilde q^* ({\mathcal G}^{\cdot})) \, . \nonumber
\end{align}
The functor ${\mathcal E} \mapsto \tilde q_Y^* (p_{Y*} ({\mathcal E} \otimes {\mathcal F}^{\cdot})) \otimes {\mathcal G}^{\cdot}$ induces the map $\xi \mapsto q_Y^* (p_{Y*} (\xi \cap \alpha)) \cap \beta$ on higher $K$-theory, and the functor
$$
{\mathcal E} \mapsto \tilde q_* (\tilde p^* ({\mathcal E}) \otimes \tilde p^* ({\mathcal F}^{\cdot}) \otimes \tilde q^* ({\mathcal G}^{\cdot}))
$$
induces the map $\xi \mapsto q_* (p^* (\xi) \cap (\alpha * \beta))$. Therefore (A) and (B) follow. \end{proof}

\begin{defn}
We set $\mathbf{KM}_S$ equal to the idempotent completion of $KC_S$. Note that these are {\rm homological} motives.
\end{defn}

The proof of the following theorem, given Lemma~\ref{G-theory-is-motivic}, is the same as that of Theorem~6 of \cite{Gillet-Soule-motives-descent} (which itself is a variation of Theorem~1 of {\it op. cit.}):

\begin{thm}
\label{universal}
Let
$$
\xymatrix{
X. \ar[d]_p \ar[r]^g &Y. \ar[d]^q  \\
Z. \ar[r]_f  &W. \\
}
$$
be a commutative square of maps between simplicial objects in the category of regular projective varieties over $S$. Suppose that for all regular projective varieties $V$ over $S$, the associated square of spectra:
$$
\xymatrix{
{\mathbf G} (V \times_S X.) \ar[d] \ar[r] &{\mathbf G} (V \times_S Y.) \ar[d]  \\
{\mathbf G} (V \times_S Z.) \ar[r] &{\mathbf G} (V \times_S W.) \\
}
$$
is homotopy cartesian. Then the associated square of complexes in $KC_S$ is homotopy cartesian, {\rm i.e.}, the associated total complex is contractible, or equivalently the natural map ${\rm Cone} (\Gamma_* (g.)) \to {\rm Cone} (\Gamma_* (f.))$ is a homotopy equivalence.
\end{thm}

\subsection{Weight complexes for simplicial varieties}

Following section~2.2 of \cite{Gillet-Soule-motives-descent}, we write ${\rm Ar} (P_S)$ for the category of morphisms in $P_S$, where $P_S$ is the category of proper, not necessarily regular $S$-varieties, with objects $f : Y \to X$, and morphisms $g : f' \to f$ commutative squares :
$$
\xymatrix{
Y'\ar[d]_{g_Y} \ar[r]^{f'} &X' \ar[d]^{g_X}  \\
Y \ar[r]^f  &X \\
}
$$
We can also consider the category ${\rm Ar} (P_S^{\Delta^{\rm op}})$ of arrows between simplicial objects on $P_S$. A morphism
$$
g. : (Y'_{\cdot} \overset{f'_{\cdot}}{\longrightarrow} X') \to (Y. \overset{f.}{\longrightarrow} X.)
$$
in ${\rm Ar} (P_S^{\Delta^{\rm op}})$ will be called a proper hypercover if both $g_{Y.}$ and $g_{X.}$ are proper hypercovers. We have a functor
$$
\Gamma_* : {\rm Ar} (RP_S^{\Delta^{\rm op}}) \to C_* (KC_S)
$$
from the category of arrows between simplicial {\it regular} projective varieties to the category of chain complexes in $KC_S$,
$$
\Gamma_* : (f. : X. \to Y.) \mapsto {\rm Cone} (\Gamma_* (f.)) \, .
$$
We shall now show, following {\it op. cit.}, that this functor induces a functor from ${\rm Ar} (P_S^{\Delta^{\rm op}})$ to the homotopy category of $C_* (KC_S)$.

\begin{thm}
\label{maps}
The functor
\begin{align}
{\rm Ar} (RP_S^{\Delta^{\rm op}}) &\to &H_0 (C_* (KC_S)) \nonumber \\
f. &\mapsto &\Gamma_* (f.) \nonumber
\end{align}
has a unique extension to ${\rm Ar} (P_S^{\Delta^{\rm op}})$ with the property that proper hypercovers map to homotopy equivalences.
\end{thm}

\begin{proof} The proof is the same as in \cite{Gillet-Soule-motives-descent}, \S~2.2, using Theorem~\ref{proper} instead of Hironaka's resolution of singularities.
 The theorem is also Theorem 5.3.c) of \cite{KS}, applied to $D^{\rm op}$ the
category $P_S^{\Delta^{\rm op}}$, to $C^{\rm op}$ the full subcategory $RP_S^{\Delta^{\rm op}}$, to $S$ the category of proper hypercovers,
 and to $E = \Delta^1$. (Note that Theorem~5.3 in \cite{KS} was partly inspired by \cite{Gillet-Soule-motives-descent}.) \end{proof}

We shall call a morphism $g.$ in ${\rm Ar} (P_S^{\Delta^{\rm op}})$ a {\it universal $G$-equivalence} if after multiplying all schemes involved by any regular projective variety $V$, the corresponding square of spectra obtained by applying $G_{\mathbb Q}$ is homotopy cartesian. Theorem \ref{universal} tells us that if $g.$ is a universal $G$-equivalence and all schemes involved are regular then $\Gamma_* (g.)$ is a homotopy equivalence.

Using Theorem~1 and the same proof as in \cite{Gillet-Soule-motives-descent}, end of \S~2.2, we get from this that if $g.$ is a universal $G$-equivalence among arbitrary projective schemes, then $\Gamma_* (g.)$ is a homotopy equivalence.

\begin{thm}
\label{simplicial}
There is a covariant functor
$$
h : {\mathcal V}ar_S^{\Delta^{\rm op}} \to {\rm Ho} (C_* (KC_S))
$$
from the category of simplicial varieties over $S$ with proper face maps and proper morphisms to the category of homotopy classes of maps of complexes of $K_0$-motives over $S$ with rational coefficients, satisfying the following properties:
\begin{itemize}
\item[{\rm i)}] If, for all $n \geq 0$, $X_n$ is a regular projective scheme over $S$ and $h(X_n)$ is the usual motive of $X_n$, $h(X.)$ is the complex of motives
$$
\xymatrix{
\ldots \to h(X_n) \quad \ar[r]^{\underset{i=0}{\overset{n}{\sum}} \, (-1)^i \, d_{i*}} &\quad h(X_{n-1}) \to \ldots
}
$$
\item[{\rm ii)}] If $U. \subset X.$ is a strongly open simplicial subvariety of $X.$ with proper face maps and with complement $T.$, then we have a triangle
$$
h(T.) \to h(X.) \to h(U.) \to h(T.) [1] \, .
$$
\item[{\rm iii)}] If $X. \to Y.$ is a proper hypercover, the induced map $h(X.) \to h(Y.)$ is a homotopy equivalence.
\end{itemize}
\end{thm}

\begin{proof} Let $X.$ be a simplicial variety over $S$ with proper face maps. According to Proposition~\ref{split} there exists a proper hypercover $\tilde X. \to X.$ with $\tilde X.$ a split simplicial variety. By Proposition~\ref{compact} $\tilde X.$ admits a compactification $\bar X.$ with complement $\bar X. \backslash \tilde X.$. Let $h(X.)$ be the image in ${\rm Ho} (C_* (KC_S))$ of the arrow $\bar X. \backslash \tilde X. \to \bar X.$ (Theorem~\ref{maps}). We claim that $h(X.)$ is functorial in $X.$ and does not depend on the choices made to define it.

Indeed, let $f : X. \to Y.$ be a map in ${\mathcal V}ar_S^{\Delta^{\rm op}}$ and $\tilde X. \to X.$ and $\tilde Y. \to Y.$ proper hypercovers admitting compactifications $\bar X.$ and $\bar Y.$. The fiber product $\tilde X. \times_{Y.} \tilde Y.$ has a compactification $\overline{X. \times Y.}$ which is the Zariski closure of $\tilde X. \times_{Y.} \tilde Y.$ in $\bar X. \times_S \bar Y.$. Then we get two commutative squares
$$
\xymatrix{
\overline{X. \times Y.} \backslash \tilde X. \times \tilde Y. \ar[d] \ar[r] &\overline{X. \times Y.} \ar[d] \\
\bar X. \backslash \tilde X. \ar[r] &\bar X. \\
}
$$
and
$$
\xymatrix{
\overline{X. \times Y.} \backslash \tilde X. \times \tilde Y. \ar[d] \ar[r] &\overline{X. \times Y.} \ar[d] \\
\bar Y. \backslash \tilde Y. \ar[r] &\bar Y. \, , \\
}
$$
the first of which is a universal $G$-equivalence. Hence we get a map
$$
h(f) : h(X.) = \Gamma_* (\bar X. \backslash \tilde X. \to \bar X.) \to \Gamma_* (\bar Y. \backslash \tilde Y. \to \bar Y.) = h(Y.) \, .
$$
When $X. = Y.$ and $f$ is the identity, both squares above are universal $G$-equivalences and it follows that $h(X.)$ does not depend on choices made to define it.

Assume now that $f : X. \to Y.$ and $g : Y. \to Z.$ are proper maps of simplicial varieties. Then the same argument as in \cite{Gillet-Soule-motives-descent}, \S2.3 (except that we replace ``Gersten acyclic'' by ``universal $G$-equivalence'') shows that $h(g) \circ h(f) = h (g \circ f)$, {\it i.e.} $h(X.)$ is functorial in $X.$.

When every $X_n$ is regular and projective over $S$, $n \geq 0$, we can take $\tilde X. = \bar X. = X.$, so Property i) is clear.

To check ii), let $\tilde X.$ be a proper hypercover of $X.$, with compactification $\bar X.$. Let $\tilde U.$ (resp. $\tilde T.$) be the inverse image of $U.$ (resp. $T.$) in $\tilde X.$ and define $Y. = \bar X. - \tilde X.$, $Z. = \bar X. - \tilde U.$. We have inclusions $Y. \to Z. \to \bar X.$. Consider a proper hypercover of this diagram
$$
\xymatrix{
Y'. \ar[d] \ar[r] &Z'. \ar[d] \ar[r] &X'. \ar[d] \\
Y. \ar[r] &\bar Z. \ar[r] &\bar X. \\
}
$$
with $Y'_{\cdot}$, $Z'_{\cdot}$ and $X'_{\cdot}$ in $RP_S$. We get a triangle
$$
\Gamma_* (Y'_{\cdot} \to Z'_{\cdot}) \to \Gamma_* (Y'_{\cdot} \to X'_{\cdot}) \to \Gamma_* (Z'_{\cdot} \to X'_{\cdot}) \to \Gamma_* (Y'_{\cdot} \to Z'_{\cdot}) [+1] \, .
$$
Since $Z. \backslash Y. = \tilde T.$ and $\bar X.$ is a compactification of both $\tilde X.$ and $\tilde U.$, using Theorem~\ref{maps}, we can write this triangle as
$$
h(T.) \to h(X.) \to h(U.) \to h(T.) [1] \, .
$$

Finally, to check iii), if $f : X. \to Y.$ is a proper hypercover, $\tilde X. \to X.$ a proper hypercover and $\bar X.$ a compactification of $\tilde X.$, we notice that the composite map $\tilde X. \to Y.$ is a proper hypercover so that $h(Y.) = \Gamma_* (\bar X. \backslash \tilde X. \to \bar X.) = h(X.)$.

\subsection{Weight complexes of stacks}

We now prove Theorem \ref{motstacks} of the introduction. Given a
stack ${\mathfrak X}$ and $X. \to {\mathfrak X}$ a proper hypercover
by a simplicial variety we define $h({\mathfrak X}) = h(X.)$.

Let $f : {\mathfrak X} \to {\mathfrak Y}$ be a proper map of stacks and $X. \to {\mathfrak X}$ and $Y. \to {\mathfrak Y}$ proper hypercovers of these. According to Lemma~\ref{base change hypercovers stacks} the induced map $X. \times_{\mathfrak Y} \, Y. \to X.$ is a proper hypercover. Therefore, by Theorem~\ref{simplicial} iii), we get a map
$$
h(f) : h({\mathfrak X}) = h(X.) = h (X. \times_{\mathfrak Y} \, Y.) \to h(Y.) = h({\mathfrak Y}) \, .
$$
If $g : {\mathfrak Y} \to {\mathcal Z}$ is a proper map of stacks one easily checks that $h(g \circ f) = h(g) \circ h(f)$. When $f = {\rm id}_{\mathfrak X}$ we get $h(f) = {\rm id}_{h({\mathfrak X})}$, {\it i.e.} $h({\mathfrak X})$ does not depend on the choice of $X.$.

\begin{remark} If ${\mathfrak X}$ is a stack, then for any variety (and hence for any simplicial variety) $X$, ${\rm Hom}_S (X,{\mathfrak X})$ is a groupoid. If $f : X'_{\cdot} \to {\mathfrak X}$ and $g : X''_{\cdot} \to {\mathfrak X}$ are two proper hypercovers, then an isomorphism $\theta : f \to g$ induces a section $\tilde\theta : X'_{\cdot} \to X'_{\cdot} \times_{\mathfrak X} \, X''_{\cdot}$ of the projection $p' : X'_{\cdot} \times_{\mathfrak X} \, X'' \to X'_{\cdot}$ and hence the map $h(X'_{\cdot}) \to h(X''_{\cdot})$ induced by $\theta$ coincides with the homotopy equivalence $h(p''_{\cdot}) \, h(p'_{\cdot})^{-1}$, where $p''_{\cdot} : X'_{\cdot} \times_{\mathfrak X} \, X''_{\cdot} \to X''_{\cdot}$ is the second projection. Therefore the functor ${\mathfrak X} \to h({\mathfrak X})$ from the 2-category of stacks to the category of homotopy classes of maps between complexes of motives maps each 2-morphism to the identity.
\end{remark}

If ${\mathfrak Y} \subset {\mathfrak X}$ is a closed substack with complement ${\mathfrak U}$ and if $X. \overset{f}{\longrightarrow} {\mathfrak X}$ is a proper hypercover, $U. = f^{-1} ({\mathfrak U})$ is strongly open in $X.$ with complement a proper hypercover of ${\mathfrak Y}$. Therefore Theorem~\ref{simplicial} ii) gives a triangle
$$
h({\mathfrak Y}) \to h({\mathfrak X}) \to h ({\mathfrak U}) \to h ({\mathfrak Y}) [1] \, .
$$

If $G$ is a finite group acting on a regular projective scheme $X$, and $[X/G]$ the quotient stack, a proper hypercover of $[X/G]$ is the simplicial scheme ${\rm Cosk}_0^{[X/G]}$ $(X)$, which is isomorphic to $(X \times EG)/G$, where $EG$ is the standard contractible simplicial set with free action of $G$. In degree $k \geq 1$ we have
$$
h(X \times G^k) = \underset{(g_1 , \ldots , g_k) \in G^k}{\oplus} \, h(X)
$$
and so $h((X \times EG) / G)$ is the chain complex computing the homology groups $H_* (G,h(X))$ in the Karoubian category of motives. Since the order of $G$ is invertible, this homology vanishes except for $H_0 (G,h(X)) = h(X)^G$, the image of the projector $\frac{1}{\# \, G} \ \underset{g \in G}{\sum} \ g_*$.

To check that $h({\mathfrak X})$ is homotopy equivalent to a bounded complex we first need a lemma.

\begin{lemma}
If $f : {\mathfrak X} \to {\mathfrak Y}$ is a finite, representable radicial map of stacks, the induced map $\mathbf{G}({\mathfrak X}) \to \mathbf{G}({\mathfrak Y})$ is a weak equivalence.
\end{lemma}

\noindent {\it Proof of Lemma.} Let $Y. \to {\mathfrak Y}$ be a proper hypercover of ${\mathfrak Y}$. Since $f$ is representable, $X. : {\mathfrak X} \times_{\mathfrak Y} \, Y.$ is a simplicial scheme and the natural map $X. \to {\mathfrak X}$ is a proper hypercover. Let $g : X. \to Y.$ be the induced map.
Since $f$ is finite and radicial each $g_i : X_i \to Y_i$ is finite and radicial and so
(\cite{QuillenHAKTI}) $g_{i*} : \mathbf{G}(X_i) \to \mathbf{G}(Y_i)$ is a weak equivalence.
Taking homotopy colimits we get that $g_* : \mathbf{G}(X.) \to \mathbf{G}(Y.)$ is a weak equivalence,
and by \ref{stacks} $\mathbf{G}({\mathfrak X}) \to \mathbf{G}({\mathfrak Y})$ is a weak equivalence. \end{proof}

To prove that $h({\mathfrak X})$ is bounded we can assume, by noetherian induction, that ${\mathfrak X}$ is irreducible. By Proposition~\ref{Proposition113}, there is a dense open ${\mathfrak U} \subset {\mathfrak X}$ which is a quotient stack and, by noetherian induction, we just need to show that $h({\mathfrak U})$ is bounded. The quotient stack ${\mathfrak U}$ admits some equivariant compactification (Lemma~\ref{compactify quotient stacks}) $[V/G] = {\mathcal V}$ proper over $S$. By induction on dimension it is enough to show that $R({\mathcal V})$ is bounded. By resolution of singularities (Theorem~\ref{quotient}) there exists a proper morphism of quotient stacks
$$
p : [Y/H] \to {\mathcal V}
$$
where $Y$ is regular and integral and a dense open substack ${\mathcal W} \subset {\mathcal V}$ such that the induced map $p^{-1} ({\mathcal W}) \to {\mathcal W}$ is representable and radicial. We need to show that $h({\mathcal V})$ is bounded {\it i.e.}, again by induction, that $h({\mathcal W})$ is bounded. From the previous lemma, $p^{-1} ({\mathcal W}) \to {\mathcal W}$ is a universal $G$-equivalence, therefore $h({\mathcal W}) = h (p^{-1} ({\mathcal W}))$. But, since $Y$ is proper and regular $h([Y/H])$ is bounded hence, by induction on dimensions, $h(p^{-1} ({\mathcal W}))$ is bounded.

This ends the proof of Theorem \ref{motstacks}.
\end{proof}

Corollary \ref{Euler} follows from Theorem \ref{motstacks} be letting $\chi_c(\mathfrak{X})$
be the class in $K_0(\mathbf{KM}_S)$ of the complex $h(\mathfrak{X})$ (compare \cite{Gillet-Soule-motives-descent}, Lemma 3 and Theorem 4).

\subsection{Chow motives}
If $S=\mathrm{Spec}(k)$ with $k$ a perfect field (of arbitrary characteristic), there are three possible categories of motives that we could consider:
\begin{enumerate}
 \item The category $\mathbf{KM}_k$ of $K_0$-motives over $\mathrm{Spec}(k)$.
 \item The category $\mathbf{CHM}^0_k$ of effective Chow motives over $\mathrm{Spec}(k)$, which is the pseudo-abelian completion of the additive category in which morphisms are \emph{degree zero} correspondences, modulo linear equivalence, between smooth projective varieties over $k$, as in section 5 of \cite{Manin}.
  \item The category $\mathbf{CHM}_k$ which is the pseudo-abelian completion of the additive category in which morphisms are  \emph{all} correspondences, modulo linear equivalence,  between smooth projective varieties over $k$.
\end{enumerate}
We can also take any or all of these categories with rational coefficients, getting categories
$\mathbf{KM}_{k\mathbb{Q}}$, $\mathbf{CHM}^0_{k\mathbb{Q}}$ and $\mathbf{CHM}_{k\mathbb{Q}}$.
\begin{remark}
If we do not assume that $k$ is perfect then we should take the category of all regular projective varieties over $k$. We can still define $\mathbf{CHM}^0_k$ and $\mathbf{CHM}_k$ as above.  Since the product of two regular varieties will no longer necessarily be regular the composition of correspondences is harder to define.
\end{remark}

For any smooth variety $V$ over $k$ the Chern character
$ch:K_0(V)\to \mathrm{CH}^*(V)_\mathbb{Q}$ induces an isomorphism
$ch:K_0(V)_\mathbb{Q}\to \mathrm{CH}^*(V)_\mathbb{Q}$. 
Therefore, given varieties $X$ and $Y$,  the map:
\begin{eqnarray*}
\tau:K_0(X\times Y)_\mathbb{Q} &\to& \mathrm{CH}^*(X\times Y)_\mathbb{Q}\\
     \alpha &\mapsto & ch(\alpha)p_Y^*(\mathrm{Td}(Y))
\end{eqnarray*}
where $p_Y:X\times Y\to Y$ is the projection onto the second factor, is an isomorphism of $\mathbb{Q}$-vector spaces, since the Todd genus is a unit in the Chow ring. 

Suppose that $\alpha\in K_0(X\times Y)$ and $\beta\in K_0(Y\times Z)$ are $K_0$-correspondences from $X$ to $Y$ and from $Y$ to $Z$ respectively. Then, by the Grothendieck Riemann-Roch theorem, \cite{Borel-Serre}, we have:
\begin{eqnarray*}
\tau(\beta\cdot\alpha) &=& ch(p_{XZ*}(p_{XY}^*(\alpha)p_{YZ}^*(\beta)))p_Z^*(\mathrm{Td}(Z))\\
 &=&  p_{XZ*}(ch(p_{XY}^*(\alpha)p_{YZ}^*(\beta))p_Y^*(\mathrm{Td}(Y))p_Z^*(\mathrm{Td}(Z)))\\
&=&  p_{XZ*}(p_{XY}^*ch(\alpha)p_{YZ}^*ch(\beta)p_Y^*(\mathrm{Td}(Y))p_Z^*(\mathrm{Td}(Z)))\\
&=& p_{XZ*}(p_{XY}^*\tau(\alpha)p_{YZ}^*\tau(\beta))\\
&=& \tau(\beta)\cdot\tau(\alpha).
\end{eqnarray*}
Therefore $\tau$ is compatible with composition of correspondences, and hence gives an isomorphism of categories:
$$\mathbf{KM}_{k\mathbb{Q}}\to \mathbf{CHM}_{k\mathbb{Q}}.$$
If $f:X\to Y$ is a morphism, and $\gamma_f:X\to X\times Y$ is the induced map, which is an isomorphism onto the graph $\Gamma(f)$ of $f$, since the normal bundle of $\Gamma(f)\subset X\times Y$ is isomorphic to $f^*(T_Y)$ (the pull back of the tangent bundle of $Y$), the Grothendieck Riemann-Roch theorem for
$\gamma_f$ gives $\tau([\mathcal{O}_{\Gamma(f)}]=[\Gamma(f)]$, and so the functor $\tau$ respects the natural functors from the category of smooth projective varieties to the two categories of motives.

Since the functor $\tau$ is an isomorphism, we have have an isomorphism: 
$$K_0(\mathbf{KM}_{k\mathbb{Q}})\to K_0(\mathbf{CHM}_{k\mathbb{Q}}).$$

Since the degree zero correspondences from $X$ to $Y$ are a subgroup of the group of all correspondences from $X$ to $Y$, and the inclusion is compatible with composition, we get a functor  $\mathbf{CHM}^0_k\to \mathbf{CHM}_k$.  This functor preserves idempotents and is therefore exact, and so induces a homomorphism on the associated Grothendieck groups:
$$K_0(\mathbf{CHM}^0_k)\to  K_0(\mathbf{CHM}_k).$$

From Corollary \ref{Euler} and the isomorphism $K_0(\mathbf{KM}_{k\mathbb{Q}})\to K_0(\mathbf{CHM}_{k\mathbb{Q}})$, we get:
\begin{cor}
Given any reduced variety $X$ over $k$ one can define
an element $\chi_c(X) \in K_0(\mathbf{CHM}_{k\mathbb{Q}})$ in such a way that

\begin{enumerate}
 \item[i)] If $X$ is smooth and projective, $\chi_c(X)$ is the class
of $(X,\Delta_X)$, where $\Delta_X$ is the diagonal
in $X\times X$.
 \item[ii)] If $Y\subset X$ is a closed subset,
the  equality
$$\chi_c(X) = \chi_c(Y) + \chi_c(X\setminus Y)$$
holds in $K_0(\mathbf{CHM}_{k\mathbb{Q}})$.
\end{enumerate}
\end{cor}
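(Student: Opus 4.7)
The plan is to deduce this corollary directly from Corollary~\ref{Euler} by transporting the Euler characteristic through the isomorphism $\tau\colon\mathbf{KM}_{k\mathbb{Q}}\to\mathbf{CHM}_{k\mathbb{Q}}$ constructed earlier in this subsection. Concretely, I would first apply Corollary~\ref{Euler} with $S=\mathrm{Spec}(k)$ and with $\mathfrak{X}$ taken to be the variety $X$ itself (viewed as a stack), yielding an element $\chi_c^K(X)\in K_0(\mathbf{KM}_{k\mathbb{Q}})$. Then define
\[
\chi_c(X) := K_0(\tau)\bigl(\chi_c^K(X)\bigr) \in K_0(\mathbf{CHM}_{k\mathbb{Q}}),
\]
where $K_0(\tau)$ is the group isomorphism induced on Grothendieck groups by the equivalence of categories $\tau$ already established.

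For property (i), I would specialize clause (i) of Corollary~\ref{Euler} to the trivial group $G=\{1\}$ acting on the smooth projective variety $X$, so that $[X/G]=X$ and the only graph appearing is the graph of the identity, namely the diagonal $\Delta_X\subset X\times X$. This gives $\chi_c^K(X)=[(X,[\mathcal{O}_{\Delta_X}])]$ in $K_0(\mathbf{KM}_{k\mathbb{Q}})$. Applying $\tau$ and invoking the Grothendieck--Riemann--Roch computation carried out earlier in the section for the closed immersion $\gamma_{\mathrm{id}_X}\colon X\to X\times X$, which yielded $\tau([\mathcal{O}_{\Gamma(f)}])=[\Gamma(f)]$ for any morphism $f$, I get $\tau([\mathcal{O}_{\Delta_X}])=[\Delta_X]$. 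Hence $\chi_c(X)=[(X,\Delta_X)]$, as required.

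For property (ii), the additivity under closed-open decomposition is inherited formally: clause (ii) of Corollary~\ref{Euler} gives the identity $\chi_c^K(X)=\chi_c^K(Y)+\chi_c^K(X\setminus Y)$ in $K_0(\mathbf{KM}_{k\mathbb{Q}})$, and applying the group homomorphism $K_0(\tau)$ transports this to the desired identity in $K_0(\mathbf{CHM}_{k\mathbb{Q}})$.

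There is essentially no obstacle here beyond bookkeeping: all the hard content (existence of the weight complex on stacks, resolution of singularities via de Jong, additivity under the triangle, and the compatibility of $\tau$ with composition of correspondences and with graphs of morphisms) has already been proved. The only minor point to be careful about is that the identification $\tau([\mathcal{O}_{\Delta_X}])=[\Delta_X]$ needs $X$ to be smooth (so that the Todd class and Chern character calculation via GRR for the regular closed immersion $\gamma_{\mathrm{id}_X}$ applies without modification), which is exactly the hypothesis of clause~(i).
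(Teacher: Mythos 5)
Your proposal is correct and matches the paper's own (very terse) argument: the paper simply states that the corollary follows from Corollary~\ref{Euler} together with the isomorphism $K_0(\mathbf{KM}_{k\mathbb{Q}})\to K_0(\mathbf{CHM}_{k\mathbb{Q}})$ induced by $\tau$, which is precisely the transport you carry out. Your specialization to the trivial group and the use of $\tau([\mathcal{O}_{\Gamma(f)}])=[\Gamma(f)]$ (via Grothendieck--Riemann--Roch) fill in the details in exactly the way the paper intends.
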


One can also prove the existence 
in ${K_0(\mathbf{CHM}^0_{k\mathbb{Q}})}$  
of an Euler characteristic satisfying i) and ii) above (and mapping to $\chi_c(X)$ in $K_0(\mathbf{CHM}_{k\mathbb{Q}})$) by using the method of \cite{Gillet-Soule-motives-descent}. Indeed Theorem \ref{descent} above shows that if $f.:X.\to Y.$ is a proper hypercover between simplicial varieties having proper face maps, the induced map of Gersten complexes tensored with $\mathbb{Q}$ is a quasi-isomorphism.  If we use this fact instead of Proposition 1 in \cite{Gillet-Soule-motives-descent}, the whole argument in \emph{op. cit.} remains valid (after tensoring with $\mathbb{Q}$) and the existence of an Euler characteristic in $K_0(\mathbf{CHM}^0_{k\mathbb{Q}})$ follows as in \cite{Gillet-Soule-motives-descent}, Theorem 4.  When $k$ has characteristic zero, this Euler characteristic is the image of the Euler characteristic in $K_0(\mathbf{CHM}^0{k})$ constructed in \cite{Gillet-Soule-motives-descent},  \cite{Guillen}, \cite{Bittner}.

\subsection{A variant}

Let $\ell$ be a prime integer.

\begin{defn}
If $f : X \to Y$ is proper and surjective morphism, we say that $f$ is an
$\ell'$-{\rm envelope} if, for every point $y \in Y$, there
is an $x \in X$ {\rm s.t.} $f(x) = y$ and $k(y) \subset k(x)$ is a finite algebraic extension of degree prime to $\ell$.
\end{defn}

It follows from a result of Gabber \cite{Gabber} that if $X$ is an irreducible reduced separated scheme flat and of
finite type over ${\rm Spec} (R)$,
for $R$ an excellent
Dedekind ring
with $\ell$ invertible in $R$,
there exists an $\ell'$-envelope $X' \to X$ with $X'$ regular.

If ${\mathcal P}$ is the class of $\ell'$-envelopes; then ${\mathcal P}$ satisfies the conditions of
\S{2.2}, and so we can talk of a map $f. : X. \to Y.$ of simplicial
schemes being a ``hyper $\ell'$-envelope''. One may then show, by the methods of section \ref{descent},
that any homology theory $E$ on the category of varieties taking values in the category of spectra,
localized away from $\ell$, satisfies descent with respect to hyper $\ell'$-envelopes, and that one has a functor, for $S = {\rm Spec} (R)$
$$
{\rm Stack}_S \to {\rm Ho} (C_* (KC_S ; {\mathbb Z}_{(\ell)}))
$$
from the category of Deligne-Mumford stacks of finite type over $S$ to the category of homotopy classes of
complexes of $K_0$-motives over $S$ with ${\mathbb Z}_{(\ell)}$-coefficients with the properties of
Theorem~\ref{motstacks}, {\it except} that we do not know if it takes values in the subcategory of
complexes homotopy equivalent to bounded complexes.

\section{Contravariance of weight complexes}
\subsection{A Category of complexes of sheaves}

In this section ${\mathcal V}$ will be the category of varieties
($=$ reduced separated schemes of finite type) over a field $k$.

\begin{defn}
If $X$ is a variety, we say that a quasi-coherent sheaf $\mathcal F$
on $X$ is \emph{strongly acyclic} if for every morphism of varieties
$f:X\to Y$ and every quasi-coherent sheaf $\mathcal G$ of
$\mathcal{O}_X$-modules,
$$R^if_*(\mathcal{F}\otimes_{\mathcal{O}_X} \mathcal{G})=0$$
for $i>0$. We say that $\mathcal F$ is \emph{universally} strongly
acyclic, if for every morphism of varieties $f:X\to Y$, all
morphisms $Z\to Y$, and every quasi-coherent sheaf $\mathcal G$ of
$\mathcal{O}_{Z\times_Y X}$-modules,
$$R^if_{Z*}(\mathcal{F}\otimes_{\mathcal{O}_{X}} \mathcal{G})=0$$
where $f_{Z}$ is the base change of $f$ by the morphism $Z\to Y$.
\end{defn}
The example that we have in mind is:
\begin{lemma}\label{affines_are_usacyclic}
If $j:U=\mathrm{Spec}(A)\hookrightarrow X$ is an affine open subset,
and if $\mathcal F=j_*\widetilde{M}$, for $\widetilde{M}$ the
quasi-coherent sheaf of ${\mathcal O}_U$-modules associated to the
$A$-module $M$, then $\mathcal F$ is universally strongly acyclic.
\end{lemma}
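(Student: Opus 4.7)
The plan is to reduce the vanishing to the well-known vanishing of higher direct images along affine morphisms. The first observation I would make is that the open immersion $j: U\hookrightarrow X$ is itself an affine morphism, because in the separated scheme $X$ every intersection $U\cap V$ with an affine open $V$ is affine. From this, the composition $h := f\circ j: U\to Y$ is also affine: for any affine open $V\subset Y$, the preimage $h^{-1}(V) = U\times_Y V$ is cut out inside the affine scheme $U\times_k V$ by the pullback of the diagonal of $Y$, which is a closed immersion because $Y$ is separated.

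Given a morphism $Z\to Y$, I would form the cartesian square whose top row is $U_Z := Z\times_Y U \hookrightarrow W := Z\times_Y X$, with projections $p_U: U_Z\to U$ and $p_X: W\to X$. Both $j_Z: U_Z\hookrightarrow W$ and $h_Z := f_Z\circ j_Z: U_Z\to Z$ are affine, as base changes of the affine morphisms $j$ and $h$. Since pullback commutes with pushforward along affine morphisms for quasi-coherent sheaves (affine-locally this is just the identity $M\otimes_R S = M\otimes_R S$), there is a canonical isomorphism $p_X^*\mathcal F = p_X^*j_*\widetilde M \cong (j_Z)_*\,p_U^*\widetilde M$.

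Now, for any quasi-coherent sheaf $\mathcal G$ on $W$, the projection formula for the affine morphism $j_Z$ yields
$$p_X^*\mathcal F\otimes_{\mathcal O_W}\mathcal G \;\cong\; (j_Z)_*\bigl(p_U^*\widetilde M\otimes_{\mathcal O_{U_Z}} j_Z^*\mathcal G\bigr).$$
Applying $R(f_Z)_*$ and noting that $R^q(j_Z)_* = 0$ for $q>0$ (affine morphism), the Leray spectral sequence for $f_Z\circ j_Z = h_Z$ collapses to give
$$R^i(f_Z)_*\bigl(p_X^*\mathcal F\otimes_{\mathcal O_W}\mathcal G\bigr) \;\cong\; R^i(h_Z)_*\bigl(p_U^*\widetilde M\otimes_{\mathcal O_{U_Z}} j_Z^*\mathcal G\bigr),$$
and the right-hand side vanishes for $i>0$ because $h_Z$ is affine.

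I do not anticipate a substantive obstacle: each step reduces to a standard fact about affine morphisms (vanishing of higher direct images, projection formula, base-change compatibility), and each is verifiable by commutative algebra on affine opens. The only place demanding attention is the verification that $h = f\circ j$ is affine; this is precisely where the separatedness of the target variety $Y$ enters essentially, via its diagonal being a closed immersion.
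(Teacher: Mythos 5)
Your proof is correct and follows essentially the same route as the paper's: reduce everything to the vanishing of higher direct images along affine morphisms, using that the open immersion $j$ and the composite $f\circ j$ are affine (by separatedness), that affineness is stable under base change, and then the projection formula together with the Leray spectral sequence for $h_Z = f_Z \circ j_Z$. The only organizational difference is that the paper first proves strong acyclicity (the $Z = Y$ case) via the projection formula and then upgrades to the universal statement by pulling back to $Z\times_Y X$ after reducing to $Z$ affine, whereas you handle the universal case in one pass by base-changing the whole diagram; both hinge on the same facts about affine morphisms, and yours is, if anything, marginally more streamlined.
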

\begin{proof}
First of all observe that since varieties are separated, any
morphism $g:U\to Y$ of varieties with affine domain is an affine
morphism, and so $R^ig_*(\widetilde{M})=0$ for $i>0$.  It then follows by a standard spectral
sequence argument that $j_*\widetilde{M}$ is acyclic with respect to
any morphism $f:X\to Y$.

Furthermore, if $\mathcal G$ is a quasi-coherent sheaf on $X$, then
$j^*{\mathcal G}$ is quasi-coherent ($=\widetilde{\mathcal{G}(U)}$),
and
$$j_*\widetilde{M}\otimes_{\mathcal{O}_X} {\mathcal G}\simeq j_*(\widetilde{M}\otimes_{\mathcal{O}_U} j^*{\mathcal G}).$$
We see, therefore, that $\widetilde{M}$ is strongly acyclic.
Finally, let $f:X\to Y$ and $Z\to Y$ be morphisms of varieties;
since higher direct images with respect to
$$f_Z:Z\times_Y X\to Z$$
can be computed locally on $Z$, we may suppose that $Z$ is affine.
But then (separation again!) $Z\times_Y U$ is an affine open subset
of  $Z\times_Y X$ and writing $j_Z:Z\times_Y U\to Z\times_Y X$ for
the inclusion, we have:
$$j_*\widetilde{M}\otimes_{\mathcal{O}_X}\mathcal{O}_{Z\times_Y X}\simeq (j_Z)_*(\widetilde{M}\otimes_{\mathcal{O}_U}\mathcal{O}_{Z\times_Y U}) .$$
But $\widetilde{M}\otimes_{\mathcal{O}_U}\mathcal{O}_{Z\times_Y U}$
is again a quasi-coherent sheaf on an affine open, and so
$j_*(\widetilde{M})\otimes_{\mathcal{O}_X}\mathcal{O}_{Z\times_Y X}$
is strongly acyclic.  Thus $j_*\widetilde{M}$ is universally
strongly acyclic.

\end{proof}
Note also, that by a standard spectral sequence argument:
\begin{lemma}
If $f:X\to Y$ is a morphism, and $\mathcal F$ is a strongly acyclic
(resp. universally strongly acyclic) quasi-coherent sheaf of
${\mathcal O}_X$-modules, then $f_*{\mathcal F}$ is strongly acyclic
(resp. universally strongly acyclic).
\end{lemma}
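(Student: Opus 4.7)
I would verify that $f_*\mathcal{F}$ is strongly acyclic by checking the vanishing $R^i g_*(f_*\mathcal{F} \otimes_{\mathcal{O}_Y} \mathcal{G}) = 0$ for all $i > 0$, any morphism $g : Y \to W$, and any quasi-coherent $\mathcal{G}$ on $Y$. The plan is to combine two applications of strong acyclicity through the Grothendieck composition spectral sequence, then establish the underived projection formula
$f_*(\mathcal{F} \otimes_{\mathcal{O}_X} f^*\mathcal{G}) \simeq f_*\mathcal{F} \otimes_{\mathcal{O}_Y} \mathcal{G}$
by a Čech-theoretic argument that is forced by strong acyclicity.

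\textbf{Step 1: double application of strong acyclicity.} Applying the hypothesis to $f$ and the quasi-coherent sheaf $f^*\mathcal{G}$ on $X$ gives $R^q f_*(\mathcal{F}\otimes f^*\mathcal{G})=0$ for $q>0$. Applying it to $g\circ f : X\to W$ and the same sheaf $f^*\mathcal{G}$ gives $R^k(g\circ f)_*(\mathcal{F}\otimes f^*\mathcal{G})=0$ for $k>0$. The Grothendieck spectral sequence
$$
R^p g_*\bigl(R^q f_*(\mathcal{F}\otimes f^*\mathcal{G})\bigr) \Longrightarrow R^{p+q}(g\circ f)_*(\mathcal{F}\otimes f^*\mathcal{G})
$$
therefore collapses, and yields $R^p g_*\bigl(f_*(\mathcal{F}\otimes f^*\mathcal{G})\bigr)=0$ for $p>0$.

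\textbf{Step 2: projection formula from strong acyclicity.} The question of comparing $f_*(\mathcal{F}\otimes f^*\mathcal{G})$ with $f_*\mathcal{F}\otimes \mathcal{G}$ is local on $Y$, so I would reduce to $Y=\mathrm{Spec}(A)$ and $\mathcal{G}=\widetilde{M}$. Choose a finite affine cover $\{U_\alpha\}$ of $X$; by separatedness all intersections $U_\sigma$ are affine, and on each of them the projection formula is trivial, giving $\Gamma(U_\sigma,\mathcal{F}\otimes f^*\widetilde{M})=\Gamma(U_\sigma,\mathcal{F})\otimes_A M$. Consequently the Čech complex of $\mathcal{F}\otimes f^*\widetilde{M}$ is obtained from the Čech complex of $\mathcal{F}$ by the functor $-\otimes_A M$. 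Strong acyclicity applied to $\mathcal{F}$ (with $\mathcal{G}'=\mathcal{O}_X$) makes the augmented Čech complex of $\mathcal{F}$ exact, and applied to $\mathcal{F}\otimes f^*\widetilde{M}$ makes the augmented Čech complex of $\mathcal{F}\otimes f^*\widetilde{M}$ (= the tensored complex, augmented by $\Gamma(X,\mathcal{F}\otimes f^*\widetilde{M})$) exact. Exactness of the tensored complex for every $M$ forces all syzygy modules $K^i = \ker(d^i)$ in the Čech complex of $\mathcal{F}$ to be flat over $A$; a short diagram chase then identifies $\Gamma(X,\mathcal{F})\otimes_A M$ with the kernel of $\prod_\alpha \Gamma(U_\alpha,\mathcal{F})\otimes_A M \to \prod_{\alpha\beta}\Gamma(U_{\alpha\beta},\mathcal{F})\otimes_A M$, which is $\Gamma(X,\mathcal{F}\otimes f^*\widetilde{M})$. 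Thus the natural projection-formula map is an isomorphism, and combining with Step~1 gives $R^p g_*(f_*\mathcal{F}\otimes \mathcal{G})=0$ for $p>0$.

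\textbf{Universal version and main obstacle.} For universally strongly acyclic $\mathcal{F}$, the same argument runs after any base change $Z\to Y$: the hypothesis provides strong acyclicity of the pullback, which supplies every vanishing needed, and the local Čech comparison base-changes without difficulty. The one genuinely non-trivial point is the projection formula of Step~2; I expect this to be the main obstacle, since for a general quasi-coherent $\mathcal{G}$ the natural map $f_*\mathcal{F}\otimes \mathcal{G}\to f_*(\mathcal{F}\otimes f^*\mathcal{G})$ fails to be an isomorphism without extra hypotheses. The content of the lemma is that strong acyclicity is strong enough to force this iso, via the flatness of the Čech syzygy modules noted above. Once that is established, the spectral sequence assembly of Step~1 finishes the proof.
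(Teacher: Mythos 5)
Your Step 1 is, in effect, the paper's entire proof: the lemma is stated there with only the words ``by a standard spectral sequence argument'', i.e.\ precisely the collapse of the Grothendieck--Leray spectral sequence for $g\circ f$ that you wrote down. You are also right to single out the projection formula $f_*\mathcal{F}\otimes_{\mathcal{O}_Y}\mathcal{G}\simeq f_*(\mathcal{F}\otimes_{\mathcal{O}_X}f^*\mathcal{G})$ as the point which the spectral sequence by itself does not address, since the statement to be proved concerns $f_*\mathcal{F}\otimes_{\mathcal{O}_Y}\mathcal{G}$ with $\mathcal{G}$ quasi-coherent on $Y$ and not locally free. The difficulty is that your Step 2 does not actually establish that formula.

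The pivotal inference in Step 2 --- that exactness of the tensored \v{C}ech complex in positive degrees for every $M$ forces the syzygy modules $\ker(d^i)$ to be flat over $A$ --- is unjustified, and it is false. Nothing in the hypotheses makes $\mathcal{F}$ flat over $Y$: for example, when $X$ is affine \emph{every} quasi-coherent sheaf is universally strongly acyclic (any morphism from an affine variety to a separated one is affine), while $\Gamma(X,\mathcal{F})$, which is the degree-zero syzygy for the one-element cover, can be an arbitrary non-flat $A$-module. Moreover the vanishing $H^i(C^\bullet\otimes_A M)=0$ for $i>0$ carries less information than you use: in the top degree it is automatic from right-exactness of $\otimes_A M$, so for a two-element affine cover the hypothesis is empty, whereas the identification $H^0(C^\bullet\otimes_A M)\cong H^0(C^\bullet)\otimes_A M$ you need is, by the long exact Tor sequence for $0\to\ker d\to C^0\to C^1\to 0$, equivalent to surjectivity of $\mathrm{Tor}_1^A(C^0,M)\to\mathrm{Tor}_1^A(C^1,M)$ --- and nothing you have written supplies that. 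So the concluding ``short diagram chase'' has no input, and the projection formula, which you correctly identify as the main obstacle (and which the paper's one-line argument silently assumes), remains unproved; note that in the cases the paper actually uses (sheaves built from pushforwards along affine open immersions, as in Lemma~\ref{affines_are_usacyclic}) it holds because the morphisms involved are affine, not because of a flatness phenomenon. There is also a second point glossed over in your ``universal'' paragraph: universal strong acyclicity of $f_*\mathcal{F}$ involves the pullback of $f_*\mathcal{F}$ along an arbitrary, not necessarily flat, base change $Z\to W$, so comparing it with $f_{Z*}$ of the pullback of $\mathcal{F}$ requires a separate base-change argument that flat base change does not provide.
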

Given varieties $X$ and $Y$, we let ${\mathbf C} (X,Y)$ be the following
Waldhausen category (also referred to as a category with
cofibrations and weak equivalences, but here we follow the
terminology of \cite{Thomason_Trobaugh}).

The objects of ${\mathbf C} (X,Y)$ are bounded complexes of  ${\mathfrak
F}^*$ of quasi-coherent ${\mathcal O}_{X \times Y}$-modules, such that
\begin{itemize}
    \item  each ${\mathfrak F}^i$ is flat over $X$;
    \item the cohomology sheaves of ${\mathfrak F}^{\cdot}$ are coherent with
support proper over $Y$
    \item each ${\mathfrak F}^i$ is universally strongly acyclic.
\end{itemize}
The cofibrations in ${\mathbf C} (X,Y)$ are the monomorphisms
${\mathfrak F}^{\cdot} \to {\mathcal G}^{\cdot}$ with cokernel in
${\mathbf C} (X,Y)$, and the weak equivalencies are the
quasi-isomorphisms.

\begin{lemma}
\label{contra} { \ }

\begin{itemize}
\item[{\rm 1)}] Let $f : X' \to X$ be a morphism of varieties and
${\mathcal F}^{\cdot}$ an object in ${\mathbf C} (X,Y)$. Then $(f \times
1_Y)^* ({\mathcal F}^{\cdot})$ lies in ${\mathbf C} (X' , Y)$.
\item[{\rm 2)}] Let $p : Y \to Y'$ be a proper morphism of varieties.
Then $(1_X \times p)_* ({\mathcal F}^{\cdot})$ lies in ${\mathbf C}
(X,Y')$.
\end{itemize}
\end{lemma}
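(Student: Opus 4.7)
The plan is to verify, for each of the two functors, that all three defining conditions of $\mathbf{C}$ are preserved: flatness of each term over the first factor, coherence of the cohomology sheaves with support proper over the second factor, and universal strong acyclicity of each individual term. The first two conditions will follow from standard base-change and Tor-vanishing arguments, while the third---precisely the condition that allows these operations to be performed without passing to derived functors---is the delicate ingredient.

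For part 1), write $\phi=f\times 1_Y$. Flatness of $\phi^{*}\mathcal{F}^{i}$ over $X'$ is immediate from base change, since $p_{X'}:X'\times Y\to X'$ is obtained from $p_X:X\times Y\to X$ by base change along $f$. Because $p_X$ is flat (as $Y$ is flat over the base field $k$), one has an identification $\mathcal{O}_{X'\times Y}\simeq\mathcal{O}_{X'}\otimes_{\mathcal{O}_X}\mathcal{O}_{X\times Y}$, which combined with the $\mathcal{O}_X$-flatness of each $\mathcal{F}^{i}$ yields $\mathrm{Tor}^{\mathcal{O}_{X\times Y}}_{j}(\mathcal{F}^{i},\mathcal{O}_{X'\times Y})=0$ for $j>0$. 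Hence $\phi^{*}$ is exact on the complex $\mathcal{F}^{\cdot}$, so $H^{i}(\phi^{*}\mathcal{F}^{\cdot})=\phi^{*}H^{i}(\mathcal{F}^{\cdot})$, which is coherent (pullback of coherent is coherent) with support proper over $Y$ (properness of the $Y$-projection is preserved under base change along $f$ on the $X$-factor). For universal strong acyclicity of $\phi^{*}\mathcal{F}^{i}$: given $g:X'\times Y\to V$, $V'\to V$ and $\mathcal{G}$ on $W:=V'\times_{V}(X'\times Y)$, one writes $\phi^{*}\mathcal{F}^{i}\otimes\mathcal{G}\simeq\alpha^{*}\mathcal{F}^{i}\otimes\mathcal{G}$ with $\alpha:W\to X\times Y$ the composition of $\phi$ with the projection $W\to X'\times Y$, and fits this into a cartesian square of the type governing universal strong acyclicity of $\mathcal{F}^{i}$ on $X\times Y$---enlarging the base appropriately using the graph-type map $(g,f\circ p_{X'}):X'\times Y\to V\times X$ paired with the projection $p_X:X\times Y\to X$.

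For part 2), write $\psi=1_X\times p$; this is proper as a base change of $p$. The universal strong acyclicity of each $\mathcal{F}^{i}$, applied to $\psi$ itself as the structure morphism in the definition, yields $R^{j}\psi_{V'*}(\mathcal{F}^{i}\otimes\mathcal{H})=0$ for every base change $V'\to X\times Y'$, every $\mathcal{H}$ on $V'\times_{X\times Y'}(X\times Y)$, and every $j>0$. In particular $\psi_{*}$ is exact on $\mathcal{F}^{\cdot}$, so $H^{i}(\psi_{*}\mathcal{F}^{\cdot})=\psi_{*}H^{i}(\mathcal{F}^{\cdot})$, which is coherent (proper pushforward of coherent is coherent) and has support proper over $Y'$ (composition of two proper morphisms). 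Flatness of $\psi_{*}\mathcal{F}^{i}$ over $X$ follows from cohomology-and-base-change combined with the same vanishing: for any $T\to X$ one gets $(\psi_{*}\mathcal{F}^{i})_{T}\simeq\psi_{T,*}(\mathcal{F}^{i}_{T})$, and the $X$-flatness of $\mathcal{F}^{i}$ transfers to $\psi_{*}\mathcal{F}^{i}$. Finally, universal strong acyclicity of $\psi_{*}\mathcal{F}^{i}$ follows from the Leray spectral sequence for $g\circ\psi:X\times Y\to V$ for arbitrary $g:X\times Y'\to V$: the term $R^{p}g_{V'*}R^{q}\psi_{V'*}(\mathcal{F}^{i}\otimes\psi_{V'}^{*}\mathcal{G})$ has $R^{q}\psi_{V'*}=0$ for $q>0$ by universal strong acyclicity of $\mathcal{F}^{i}$ with respect to $\psi$, while the abutment $R^{p+q}(g\psi)_{V'*}(\mathcal{F}^{i}\otimes\psi_{V'}^{*}\mathcal{G})$ vanishes for $p+q>0$ by universal strong acyclicity of $\mathcal{F}^{i}$ with respect to $g\circ\psi$.

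The main obstacle will be the preservation of universal strong acyclicity in part 1). Unlike flatness and the cohomology condition, this property is not automatically preserved under arbitrary pullback, so one must genuinely exploit the product structure of $\phi=f\times 1_Y$. The reduction to a cartesian diagram involving $X\times Y$ (rather than $X'\times Y$) is precisely what makes the original hypothesis on $\mathcal{F}^{i}$ applicable, and verifying that the graph-type enlargement of the base produces such a diagram is the key technical step.
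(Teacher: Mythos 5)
Your plan of verifying the three defining conditions of $\mathbf{C}$ is the natural one, and you are right that universal strong acyclicity (USA) of the individual terms is the delicate point in part 1). However, the ``graph-type enlargement'' you sketch there is not actually carried out, and I do not think it can be. The structural obstruction is that a test morphism $g\colon X'\times Y\to V$ depends jointly on both factors, so $W=V'\times_V(X'\times Y)$ cannot in general be realized as a base change $Z\times_{\widetilde{V}}(X\times Y)$ in which the projection to $Z$ agrees with (or factors compatibly through) $g_{V'}$. Indeed, USA does not appear to be preserved by $(f\times 1_Y)^*$ at all: take $X=\mathrm{Spec}(k)$, $X'=Y=\mathbb{P}^1$, $f$ the structure morphism, and $\mathcal{F}^{\cdot}$ the two-term \v{C}ech complex of $\mathcal{O}_{\mathbb{P}^1}$ for the cover $\{\mathbb{A}^1_t,\mathbb{A}^1_s\}$, a legitimate object of $\mathbf{C}(\mathrm{Spec}(k),\mathbb{P}^1)$. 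Then $\phi^*\mathcal{F}^0$ has direct summand $\mathcal{O}_{\mathbb{P}^1}\boxtimes j_*\mathcal{O}_{\mathbb{A}^1}$ on $\mathbb{P}^1\times\mathbb{P}^1$ (with $j\colon\mathbb{A}^1\hookrightarrow\mathbb{P}^1$), and tensoring with the line bundle $\mathcal{G}=\mathcal{O}(-2)\boxtimes\mathcal{O}$ and pushing to $\mathrm{Spec}(k)$ gives, by K\"unneth, $H^1\bigl(\mathbb{P}^1\times\mathbb{P}^1,\mathcal{O}(-2)\boxtimes j_*\mathcal{O}_{\mathbb{A}^1}\bigr)\cong H^1(\mathbb{P}^1,\mathcal{O}(-2))\otimes_k H^0(\mathbb{P}^1,j_*\mathcal{O}_{\mathbb{A}^1})\neq 0$, so this summand fails even to be strongly acyclic. (The paper itself offers no argument for part 1), asserting only that it is ``a basic property of sheaves of flat modules''; flatness and the coherence/proper-support condition do pass, but the termwise USA condition does not, at least as the definitions are literally stated.)

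Two further points. First, in both parts you assert that the relevant exact functor commutes with taking cohomology of the complex — $H^i(\phi^*\mathcal{F}^{\cdot})=\phi^*H^i(\mathcal{F}^{\cdot})$, resp.\ $H^i(\psi_*\mathcal{F}^{\cdot})=\psi_*H^i(\mathcal{F}^{\cdot})$. Neither follows from termwise acyclicity, because the cohomology sheaves $H^i(\mathcal{F}^{\cdot})$ need not themselves be $X$-flat (resp.\ $\psi_*$-acyclic); the desired conclusion (coherence with proper support) still holds, but it is obtained from the bounded hyper-Tor (resp.\ hyper-$R\psi_*$) spectral sequence rather than a strict commutation. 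Second, for part 2) the paper takes a tighter route than your sketch: it invokes its preceding unnamed lemma (pushforward preserves USA) for the third condition, and proves $\mathcal{O}_X$-flatness of $\psi_*\mathcal{F}^i$ directly from the projection-formula identification $\mathcal{A}\otimes_{\mathcal{O}_X}\psi_*\mathcal{F}^i\cong\psi_*(\mathcal{A}\otimes_{\mathcal{O}_X}\mathcal{F}^i)$ together with exactness of $-\otimes_{\mathcal{O}_X}\mathcal{F}^i$ (as $\mathcal{F}^i$ is $X$-flat) and of $\psi_*$ (by USA). This is more convincing than the ``cohomology-and-base-change'' appeal in your proposal, which as written does not by itself yield $\mathcal{O}_X$-flatness.
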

\begin{proof}
The first assertion is a basic property of sheaves of flat modules.
Turning to the second assertion, given the previous lemma, we need
only show that $(1_X \times p)_* ({\mathcal F}^{\cdot})$ is
$\mathcal{O}_X$-flat.  If
$0\to\mathcal{A}\to\mathcal{B}\to\mathcal{C}\to 0$ is an exact
sequence of $\mathcal{O}_X$-modules, then $0\to\mathcal{A}
\otimes_{{\mathcal O}_X} {\mathcal F}^{\cdot}\to\mathcal{B}
\otimes_{{\mathcal O}_X}{\mathcal F}^{\cdot}\to\mathcal{C}
\otimes_{{\mathcal O}_X}{\mathcal F}^{\cdot}\to 0$ is exact. By
universal acyclicity, the push forward of this sequence is still
exact, and hence $(1_X \times p)_* ({\mathcal F}^{\cdot})$ is still
${\mathcal O}_X$-flat.
\end{proof}

By the hypothesis on the objects in ${\mathbf C} (X,Y)$, both $p_*$ and
$f^*$ are exact functors preserving weak equivalences. Furthermore,
we can rigidify these categories by choosing inverse images with
respect to all $f:X'\to X$. Direct images are already ``rigid'', and
so we get a functor
\begin{align}
{\mathcal V}^{\rm op} \times {\mathcal V} &\to &\mbox{Waldhausen Categories} \nonumber \\
(X,Y) &\mapsto &{\mathbf C} (X,Y) \nonumber
\end{align}

If $X$ is a variety, let us write $\mathbf{C}(X)$ for the Waldhausen
category of complexes of quasi-coherent sheaves of ${\mathcal
O}_X$-modules with bounded coherent cohomology, and weak
equivalences given by quasi-isomorphisms. Recall
\cite{Thomason_Trobaugh} that the $K$-theory of this category is
naturally isomorphic to the $G$-theory of $X$.

If $X$ and $Y$ are varieties, and $p:X\times Y\to X$ and $q:X\times
Y\to Y$ are the projections, observe that if  ${\mathcal F}^\cdot$
is an object of ${\mathbf C} (X,Y)$, then
$${\mathcal G}^\cdot\mapsto q_*(p^*({\mathcal G}^\cdot)\otimes_{\mathcal{O}_{X\times Y}}{\mathcal F}^\cdot) $$
defines an exact functor
$$\mathbf{C}(X)\to \mathbf{C}(Y)$$
and furthermore it is straightforward to check:
\begin{lemma}
$$\mathbf{C}(X)\times {\mathbf C} (X,Y)\to \mathbf{C}(Y)$$
is a bi-exact functor.
\end{lemma}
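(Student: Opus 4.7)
The plan is to verify the three standard conditions for a Waldhausen bifunctor $F:A\times B\to C$ to be biexact: (i) for each $a\in A$, $F(a,-):B\to C$ is exact; (ii) for each $b\in B$, $F(-,b):A\to C$ is exact; (iii) for cofibrations $a\hookrightarrow a'$ in $A$ and $b\hookrightarrow b'$ in $B$ the canonical map $F(a',b)\cup_{F(a,b)} F(a,b')\to F(a',b')$ is a cofibration. Condition (ii), exactness in the first variable for fixed $\mathcal{F}^\cdot$, is exactly what the paragraph preceding the lemma establishes, so what remains is (i) and (iii).

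For (i), I would fix $\mathcal{G}^\cdot \in \mathbf{C}(X)$ and check that the functor $T:\mathcal{F}^\cdot \mapsto q_*(p^*\mathcal{G}^\cdot \otimes_{\mathcal{O}_{X\times Y}}\mathcal{F}^\cdot)$ sends $\mathbf{C}(X,Y)$ into $\mathbf{C}(Y)$ and preserves cofibrations and weak equivalences. Landing in $\mathbf{C}(Y)$ follows as in the text preceding the lemma, using Lemma \ref{contra} together with the pushforward preservation lemma for universal strong acyclicity. Cofibrations are preserved because a termwise short exact sequence $0\to \mathcal{F}^i\to \tilde{\mathcal{F}}^i\to \mathcal{H}^i\to 0$ stays short exact after tensoring with $p^*\mathcal{G}^\cdot$, by termwise flatness of the $\mathcal{F}^i$ over $X$, and then stays short exact after applying $q_*$ because each term is universally strongly acyclic. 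For preservation of quasi-isomorphisms I would pass to the mapping cone: the cone of a quasi-isomorphism in $\mathbf{C}(X,Y)$ is a bounded acyclic complex whose terms inherit flatness over $X$ and universal strong acyclicity; tensoring with the bounded complex $p^*\mathcal{G}^\cdot$ preserves acyclicity by the standard double-complex spectral sequence applied to flat terms, and then $q_*$ preserves acyclicity of the resulting bounded complex of $q_*$-acyclic sheaves via the hyper-derived-functor spectral sequence.

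For (iii), I would argue termwise on the sheaves inside $q_*$. The pushout-product map of sheaves
\[
(p^*\tilde{\mathcal{G}}^j \otimes \mathcal{F}^i)\cup_{p^*\mathcal{G}^j\otimes \mathcal{F}^i}(p^*\mathcal{G}^j\otimes \tilde{\mathcal{F}}^i)\;\hookrightarrow\; p^*\tilde{\mathcal{G}}^j\otimes \tilde{\mathcal{F}}^i
\]
is a monomorphism by termwise flatness over $X$ of the $\mathcal{F}^i$ and $\tilde{\mathcal{F}}^i$, and its cokernel is the tensor product of the two cokernel complexes, which again has flat, universally strongly acyclic terms. Applying $q_*$ preserves both the inclusion and the identification of the cokernel (by the argument of (i)), and the output lies in $\mathbf{C}(Y)$ by the same reasoning.

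The hard part will be preservation of quasi-isomorphisms in the second variable, since that step simultaneously invokes all three defining hypotheses of $\mathbf{C}(X,Y)$: flatness over $X$ is needed to push acyclicity through the tensor with $p^*\mathcal{G}^\cdot$; \emph{universal} strong acyclicity, as opposed to mere strong acyclicity, is needed because the sheaves $p^*\mathcal{G}^j\otimes \mathcal{F}^i$ are no longer pushforwards from a fixed affine but involve twisting along the $X$-factor; and properness of cohomology support is needed so that the output has bounded coherent cohomology on $Y$. Lemma \ref{affines_are_usacyclic} together with the subsequent pushforward preservation lemma is precisely what makes the universality usable in this tensored setting.
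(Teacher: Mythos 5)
The paper offers no argument for this lemma---it is stated as ``straightforward to check''---and your verification is precisely the routine check intended: termwise flatness over $X$ gives exactness of the tensor step, strong acyclicity of the terms of objects of ${\mathbf C}(X,Y)$ gives exactness of $q_*$ on the resulting complexes, and the pushout-product condition reduces to these same two facts, so your proof is correct. One correction to your closing commentary: \emph{mere} strong acyclicity already suffices for every $q_*$-step in this lemma, since its definition allows tensoring with an arbitrary quasi-coherent sheaf such as $p^*\mathcal{G}^j$ before pushing forward; the \emph{universal} version is what is needed elsewhere (e.g.\ for the base-change and flatness arguments in Lemma~\ref{contra} and for the composition functors $\gamma_{X,Y,Z}$), and likewise the cokernel of your pushout-product map need not have $X$-flat terms---what matters is only that its terms are $q_*$-acyclic, which strong acyclicity of the cokernel terms in ${\mathbf C}(X,Y)$ provides.
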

Equivalently, we have an exact functor from $\mathbf{C} (X,Y)$ to
the exact category $\mathbf{Exact}(\mathbf{C}(X),\mathbf{C}(Y))$ of
exact functors from $\mathbf{C}(X)$ to $\mathbf{C}(Y)$. Not
surprisingly, we have a composition law on the categories
$\mathbf{C} (X,Y)$, which it is easy to see is compatible with
composition:
$$\mathbf{Exact}(\mathbf{C}(X),\mathbf{C}(Y))\times \mathbf{Exact}(\mathbf{C}(Y),\mathbf{C}(Z))\to \mathbf{Exact}(\mathbf{C}(X),\mathbf{C}(Z))\;.$$

\begin{lemma}
Given a triple of proper varieties $X$, $Y$, $Z$, then we have a
bi-exact functor:
\begin{align}
\gamma_{X,Y,Z}:{\mathbf C} (X,Y) \times {\mathbf C} (Y,Z)&\to & {\mathbf C} (X,Z) \nonumber \\
({\mathcal G}^\cdot,{\mathcal F}^\cdot) &\mapsto & r_*(p^*({\mathcal
G}^\cdot)\otimes_{\mathcal{O} _Y}q^*({\mathcal
F}^\cdot))\nonumber\;,
\end{align}
where $p:X\times Y\times Z\to X\times Y$ , $q:X\times Y\times Z\to
Y\times Z$, and $r:X\times Y\times Z\to X\times Z$ are the
projections. Furthermore, given four varieties we an obvious
isomorphism,
$$
\gamma_{X,Z,W}\cdot(\gamma_{X,Y,Z}\times I_{\mathbf{C}(Z,W)})\simeq
\gamma_{X,Y,W}\cdot(I_{\mathbf{C}(X,Y)}\times\gamma_{Y,Z,W})
$$
\end{lemma}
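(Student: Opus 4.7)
My plan is to verify the lemma in two stages: first, that $\gamma_{X,Y,Z}(\mathcal{G}^\cdot,\mathcal{F}^\cdot)$ really lands in $\mathbf{C}(X,Z)$ and is bi-exact; second, that the associativity isomorphism holds.

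For the first stage I would check the three defining conditions of $\mathbf{C}(X,Z)$ termwise on the output complex (viewed as the total complex of the double complex with $(i,j)$-entry $r_*(p^*\mathcal{G}^i\otimes q^*\mathcal{F}^j)$). Flatness over $X$ of each term: the pullback $p^*\mathcal{G}^i$ is $X$-flat since flatness is preserved by arbitrary base change; the tensor with $q^*\mathcal{F}^j$ preserves $X$-flatness by using the projection-formula identity $M\otimes_{\mathcal{O}_X}(p^*\mathcal{G}^i\otimes q^*\mathcal{F}^j)=(M\otimes_{\mathcal{O}_X}p^*\mathcal{G}^i)\otimes q^*\mathcal{F}^j$ and the fact that $p^*\mathcal{G}^i\otimes q^*\mathcal{F}^j$ equals the derived tensor product (because $\mathcal{F}^j$ is $Y$-flat so $q^*\mathcal{F}^j$ is flat in the $Y$-direction, while $\mathcal{G}^i$ is $X$-flat); then $r_*$ preserves $X$-flatness precisely because each $q^*\mathcal{F}^j$ is universally strongly acyclic, so the push-forward commutes with tensoring by any $\mathcal{O}_X$-module. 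Coherence of cohomology with support proper over $Z$: the support of the tensor product in $X\times Y\times Z$ lies in the intersection of the pulled-back cohomology supports, which is proper over $Z$ because the support of $\mathcal{G}^\cdot$ is proper over $Y$ and that of $\mathcal{F}^\cdot$ is proper over $Z$; coherence of the $r_*$-cohomology then follows from proper push-forward (Grothendieck). Universal strong acyclicity of each output term: this is immediate from Lemma~\ref{affines_are_usacyclic} (more precisely, the stated lemma that $f_*$ preserves universal strong acyclicity), applied to the projection $r$, once we know the tensor product $p^*\mathcal{G}^i\otimes q^*\mathcal{F}^j$ is universally strongly acyclic, which itself reduces, by a short spectral-sequence argument using flatness of $p^*\mathcal{G}^i$, to the universal strong acyclicity of $q^*\mathcal{F}^j$.

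Bi-exactness then follows formally: $p^*$ and $q^*$ are exact, tensor product against a flat sheaf is exact, and $r_*$ is exact on universally strongly acyclic sheaves. So cofibrations (injections with cokernel in the category) and weak equivalences (quasi-isomorphisms) are preserved in each slot.

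For the associativity isomorphism, given $\mathcal{A}^\cdot\in\mathbf{C}(X,Y)$, $\mathcal{B}^\cdot\in\mathbf{C}(Y,Z)$, $\mathcal{C}^\cdot\in\mathbf{C}(Z,W)$, I would write both sides as an $r_*$ over $X\times Y\times Z\times W$ of the triple tensor product $\pi_{XY}^*\mathcal{A}^\cdot\otimes \pi_{YZ}^*\mathcal{B}^\cdot\otimes\pi_{ZW}^*\mathcal{C}^\cdot$, using flat base change for the square giving $X\times Y\times Z\times W\to X\times Z\times W$ and $X\times Y\times Z\times W\to X\times Y\times W$, together with the projection formula. Flat base change applies because the relevant complexes are flat in the appropriate direction, and the projection formula applies precisely because of universal strong acyclicity (which lets us replace derived push-forward by ordinary push-forward). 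The associativity and commutativity of $\otimes$ on the fourfold product then give the canonical isomorphism.

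The main technical obstacle, and the place where care is needed, is the interplay between flatness and universal strong acyclicity in the first stage: one must show that $X$-flatness survives the push-forward $r_*$ and that universal strong acyclicity is inherited by tensor products of pullbacks. Both reduce to verifying that the relevant natural maps from non-derived to derived functors are isomorphisms; universal strong acyclicity is exactly designed to make this work, so once the spectral-sequence reductions are set up cleanly the conclusions follow, but one must be deliberate about which variable's flatness is being used at each step.
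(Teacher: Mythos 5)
Your overall plan (termwise verification of the three defining conditions, bi-exactness from $Y$-flatness plus acyclicity, associativity via the fourfold product with flat base change and the projection formula) is exactly the ``straightforward exercise'' the paper has in mind, and most of it goes through. But there is one genuine gap: your verification of universal strong acyclicity of the output terms rests on the claim that $q^*{\mathcal F}^j$ is universally strongly acyclic on $X\times Y\times Z$, and this is false in general. Universal strong acyclicity is \emph{not} stable under pullback along a projection with positive-dimensional proper fibres: take $Y=Z=\mathrm{Spec}(k)$, $X=\mathbb{P}^1$ and ${\mathcal F}^j={\mathcal O}$; then $q^*{\mathcal F}^j={\mathcal O}_{\mathbb{P}^1}$, and already for the structure morphism $g:\mathbb{P}^1\to \mathrm{Spec}(k)$ and ${\mathcal G}={\mathcal O}(-2)$ one has $R^1g_*({\mathcal O}\otimes{\mathcal O}(-2))=H^1(\mathbb{P}^1,{\mathcal O}(-2))\neq 0$, so the pullback is not even strongly acyclic. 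The reason is that the ``universal'' quantifier in the definition only covers base changes of morphisms out of the scheme where the sheaf originally lives, whereas an arbitrary morphism out of $X\times Y\times Z$ need not arise as a base change of a morphism out of $Y\times Z$.

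What you \emph{can} extract directly from the definition is the one instance you need for the pushforward along $r$: applying universal strong acyclicity of ${\mathcal F}^j$ to $f=\mathrm{pr}_Z:Y\times Z\to Z$ and the base change $X\times Z\to Z$ (whose induced map is precisely $r$) gives $R^{>0}r_*(q^*{\mathcal F}^j\otimes{\mathcal M})=0$ for every quasi-coherent ${\mathcal M}$ on $X\times Y\times Z$; this suffices for your flatness-of-$r_*$ step, for bi-exactness, and for the identification of $r_*$ with $Rr_*$, so those parts of the argument survive (with the same implicit projection-formula caveat the paper already accepts in Lemma~\ref{contra}). But it does \emph{not} give universal strong acyclicity of $p^*{\mathcal G}^i\otimes q^*{\mathcal F}^j$, which involves arbitrary morphisms out of the triple product; indeed in the counterexample above the tensor product is ${\mathcal G}^i\otimes k$, and it is the universal strong acyclicity of ${\mathcal G}^i$ -- which your reduction discards -- that rescues the conclusion. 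A correct argument must use both factors: reduce, via the graph/closed-immersion trick, to projections $W'\times X\times Y\times Z\to W'$ with arbitrary quasi-coherent twists, push first to $W'\times X\times Y$ using universal strong acyclicity of ${\mathcal F}^j$, pull ${\mathcal G}^i$ out by a projection formula (justified by that same acyclicity), then push to $W'$ using universal strong acyclicity of ${\mathcal G}^i$, and assemble with the Leray spectral sequence; only then does Lemma~\ref{affines_are_usacyclic}'s companion statement (pushforward preserves universal strong acyclicity) finish the third condition. The remaining checks (coherence and properness of supports, where you should note that coherence of $R^0r_*$ requires properness of the support over the target $X\times Z$, which does hold since the support sits inside $S_{\mathcal G}\times_Y S_{\mathcal F}$, and the associativity isomorphism) are fine as sketched.
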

\begin{proof}
It is a straightforward exercise using the definition of $\mathbf{C}
(X,Y)$ to check both that $r_*(p^*({\mathcal
G}^\cdot)\otimes_{\mathcal{O} _Y}q^*({\mathcal F}^\cdot))$ is indeed
an object in ${\mathbf C} (X,Z)$ and that $ \gamma_{X,Y,Z}$ is
biexact.
\end{proof}

\subsection{Enriching the category of varieties over the category of chain complexes}
It will be convenient to work not with $K$-theory spectra but with
\emph{chain complexes} which compute rational $K$-theory. Recall
that if $\mathbf{A}$ is a spectrum, then
$\pi_*(\mathbf{A})\otimes\mathbb{Q}\simeq H_*(\mathbf{A},\mathbb{Q}
)$. If $\mathcal E$ is an exact category then, following
\cite{McCarthy}, we have an explicit chain complex
$M_*(\mathcal{E})$ which computes
$H_*(\mathbf{K}(\mathcal{E}),\mathbb{Q})$.  It is the complex
associated  to the cubical object which in degree $n$ is the
$\mathbb{Q}$-vector space spanned by the set of $n$-cubes of exact
sequences. (See \cite{Takeda} for a nice exposition of this
construction.) This construction is functorial in the sense that an
exact functor $\gamma:\mathcal{E}\to \mathcal{F}$  induces a map
of chain complexes $M_*(\gamma):M_*(\mathcal{E})\to
M_*(\mathcal{F})$, and that an isomorphism between functors
$\gamma\to\gamma'$ induces a homotopy between $M_*(\gamma)$ and
$M_*(\gamma')$.

One important feature of this construction is that if $\mathcal E$,
$\mathcal F$ and $\mathcal{G}$ are exact categories, and
$\mu:\mathcal{E}\times\mathcal{F}\to\mathcal{G}$ is a biexact
functor, then given an $m$-cube $\alpha$ of exact sequences in
$\mathcal{E}$ and an $n$-cube $\beta$ of exact sequences
in $\mathcal{F}$,
$\mu(\alpha,\beta)$ is an $(m+n)$-cube of exact sequences in
$\mathcal G$. Clearly if either
$\alpha$ or $\beta$ is degenerate, their product is too, and thus we have a pairing:
$$M_*(\mu):M_*(\mathcal{E})\otimes_\mathbf{Q}M_*(\mathcal{F})\to M_*(\mathcal{G})\;.$$

Suppose that $\mathcal{E}$ is  an exact category and $\mathbf{w}$
is a subcategory of $\mathcal{E}$ so that $(\mathcal{E},\mathbf{w})$
is a category with cofibrations (= to the admissible monomorphisms in
$\mathcal{E}$) and weak equivalences. Then we have a fibration
sequence of $K$-theory spectra
$$ \mathbf{K}(\mathcal{E}^\mathbf{w})\to \mathbf{K}(\mathcal{E})\to \mathbf{K}(\mathbf{w}\mathcal{E})  $$
\cite{Waldhausen}, 1.6.4 and \cite{Thomason_Trobaugh}, 1.8.2. Observe
that $M_*(\mathcal{E}^\mathbf{w})$ is a subcomplex of
$M_*(\mathcal{E})$, since a degenerate cube of exact sequences in
$\mathcal{E}$ which lies in $\mathcal{E}^\mathbf{w}$ is degenerate
as a cube in $\mathcal{E}^\mathbf{w}$. Hence we have:
\begin{lemma}
The rational K-theory of the Waldhausen category
$(\mathcal{E},\mathbf{w})$ is computed by the chain complex
$$M_*(\mathbf{w}\mathcal{E})= M_*(\mathcal{E})/M_*(\mathcal{E}^\mathbf{w})\; .$$
Furthermore, $(\mathcal{E},\mathbf{w})\mapsto
M_*(\mathbf{w}\mathcal{E})$ is a covariant functor from exact
categories to chain complexes.
\end{lemma}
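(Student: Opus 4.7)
The plan is to combine McCarthy's theorem with Waldhausen's fibration sequence. McCarthy gives, for any exact category $\mathcal{D}$, a natural quasi-isomorphism from $M_*(\mathcal{D})$ to a functorial chain complex computing $H_*(\mathbf{K}(\mathcal{D}); \mathbb{Q})$. I would apply this to both $\mathcal{E}$ and to the full exact subcategory $\mathcal{E}^\mathbf{w}$; by naturality with respect to the exact inclusion, the chain map $M_*(\mathcal{E}^\mathbf{w}) \hookrightarrow M_*(\mathcal{E})$ corresponds under McCarthy's quasi-isomorphisms to the map of rational $K$-theory spectra $\mathbf{K}(\mathcal{E}^\mathbf{w})_{\mathbb{Q}} \to \mathbf{K}(\mathcal{E})_{\mathbb{Q}}$.

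Because $M_*(\mathcal{E}^\mathbf{w}) \hookrightarrow M_*(\mathcal{E})$ is a degreewise injection, the short exact sequence
$$0 \to M_*(\mathcal{E}^\mathbf{w}) \to M_*(\mathcal{E}) \to M_*(\mathbf{w}\mathcal{E}) \to 0$$
identifies $M_*(\mathbf{w}\mathcal{E})$ with the mapping cone of the first arrow up to quasi-isomorphism. On the other hand, rationalizing Waldhausen's fibration sequence $\mathbf{K}(\mathcal{E}^\mathbf{w}) \to \mathbf{K}(\mathcal{E}) \to \mathbf{K}(\mathbf{w}\mathcal{E})$ identifies a chain complex computing $H_*(\mathbf{K}(\mathbf{w}\mathcal{E}); \mathbb{Q})$ with the cofiber of the rationalized first arrow. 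Taking mapping cones in the comparison square from the first paragraph then yields the desired quasi-isomorphism, proving the first assertion. The functoriality claim is immediate: an exact functor of Waldhausen categories $F\colon (\mathcal{E}, \mathbf{w}) \to (\mathcal{F}, \mathbf{v})$ carries $\mathcal{E}^\mathbf{w}$ into $\mathcal{F}^\mathbf{v}$, so the induced chain map $M_*(F)\colon M_*(\mathcal{E}) \to M_*(\mathcal{F})$ restricts to $M_*(\mathcal{E}^\mathbf{w}) \to M_*(\mathcal{F}^\mathbf{v})$ and descends to a chain map on quotients.

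The main obstacle lies in the compatibility used in the first paragraph: one needs that McCarthy's quasi-isomorphism is natural not merely for a single exact functor but as a natural transformation of the whole arrow $\mathcal{E}^\mathbf{w} \to \mathcal{E}$, so that the mapping cones can actually be compared. This should follow from the explicit cubical description of $M_*$ together with any functorial chain model for rational stable homotopy, since $M_*(\mathcal{E}^\mathbf{w}) \subset M_*(\mathcal{E})$ is picked out by the purely combinatorial condition that all edges of the cube lie in $\mathbf{w}$. An alternative, more self-contained route would be to prove directly that the relative complex $M_*(\mathcal{E})/M_*(\mathcal{E}^\mathbf{w})$ computes the rational $K$-theory of $\mathbf{w}\mathcal{E}$ by adapting McCarthy's original argument to the Waldhausen setting, which is the path most in keeping with the cubical formalism used throughout the section.
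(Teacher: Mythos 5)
Your proposal is correct and takes essentially the same approach as the paper: combine McCarthy's chain-level model with the Waldhausen/Thomason--Trobaugh fibration sequence $\mathbf{K}(\mathcal{E}^\mathbf{w})\to \mathbf{K}(\mathcal{E})\to \mathbf{K}(\mathbf{w}\mathcal{E})$, and use the degreewise inclusion of $M_*(\mathcal{E}^\mathbf{w})$ into $M_*(\mathcal{E})$ to identify the quotient with the cofiber. The one point the paper makes explicit that you gloss over is why $M_*(\mathcal{E}^\mathbf{w})\to M_*(\mathcal{E})$ is in fact injective (i.e.\ a genuine subcomplex): a cube of exact sequences in $\mathcal{E}^\mathbf{w}$ that is degenerate in $\mathcal{E}$ is already degenerate in $\mathcal{E}^\mathbf{w}$, so no nondegenerate generator is killed when passing to the larger category.
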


\begin{defn}
If $X$ and $Y$ are varieties, we shall write $M_*(X,Y)$ for  $M_*({\mathbf
C} (X,Y))$. We then have a functor:
\begin{eqnarray*}
{\mathcal V}^{\rm op} \times {\mathcal V} &\to & {\rm Chain(\mathbb{Q})
} \\
(X,Y) &\mapsto & M_*(X,Y)
\end{eqnarray*}
where ${\rm Chain}(\mathbb{Q})$ is the category of chain complexes
of $\mathbb{Q}$-vector spaces.
\end{defn}

\begin{prop}
\label{G} If $X$ is regular and projective, the natural functor from
${\mathbf C} (X,Y)$ to the Waldhausen category of all bounded complexes
of quasi-coherent sheaves with coherent cohomology on $X \times Y$
induces an
isomorphism on $K$-theory, and hence a weak equivalence of
$K$-theory spectra:
$$
{\mathbf K}C (X,Y) \simeq {\mathbf G}(X \times Y)\; .
$$
\end{prop}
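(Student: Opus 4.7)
The plan is to apply Waldhausen's approximation theorem to the exact inclusion functor
$$\iota:\mathbf{C}(X,Y)\hookrightarrow \mathbf{B}(X\times Y),$$
where $\mathbf{B}(X\times Y)$ denotes the Waldhausen category of bounded complexes of quasi-coherent $\mathcal{O}_{X\times Y}$-modules with coherent cohomology, with quasi-isomorphisms as weak equivalences. The $K$-theory of $\mathbf{B}(X\times Y)$ agrees with $\mathbf{G}(X\times Y)$ by the theorem of Thomason-Trobaugh \cite{Thomason_Trobaugh}. Since $X$ is projective, the second projection $X\times Y\to Y$ is proper, so the support of the cohomology of any coherent sheaf on $X\times Y$ is automatically proper over $Y$; hence the support condition in the definition of $\mathbf{C}(X,Y)$ is vacuous and $\iota$ is well-defined.

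The first approximation axiom---that a map in $\mathbf{C}(X,Y)$ is a weak equivalence precisely when its image in $\mathbf{B}(X\times Y)$ is---is immediate since quasi-isomorphisms are weak equivalences on both sides. Via the usual mapping cylinder construction (which clearly preserves $X$-flatness and universal strong acyclicity), the second axiom reduces to showing that every $\mathcal{G}^\cdot\in \mathbf{B}(X\times Y)$ admits a quasi-isomorphism $\mathcal{F}^\cdot\to \mathcal{G}^\cdot$ from some $\mathcal{F}^\cdot\in \mathbf{C}(X,Y)$.

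The core task is therefore to produce, for every bounded complex $\mathcal{G}^\cdot$ of quasi-coherent $\mathcal{O}_{X\times Y}$-modules with coherent cohomology, a bounded resolution by quasi-coherent sheaves which are simultaneously $X$-flat and universally strongly acyclic. I would proceed by combining two constructions. First, fix finite affine open covers $X=\bigcup_\alpha U_\alpha$ and $Y=\bigcup_\beta V_\beta$; the products $\{U_\alpha\times V_\beta\}$ form a finite affine open cover of $X\times Y$, and the associated \v{C}ech complex furnishes a bounded resolution of any quasi-coherent sheaf whose terms are finite direct sums of pushforwards $j_*\widetilde M$ from affine opens, hence universally strongly acyclic by Lemma~\ref{affines_are_usacyclic}. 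Second, since $X$ is regular of finite Krull dimension $d$, every quasi-coherent $\mathcal{O}_{X\times Y}$-module has Tor-dimension at most $d$ over $p_X^{-1}\mathcal{O}_X$ (where $p_X:X\times Y\to X$ is the projection); consequently, any bounded-above resolution by $X$-flat quasi-coherent sheaves (for instance by free $\mathcal{O}_{X\times Y}$-modules, which are $X$-flat since $p_X$ is flat) may be truncated after $d$ steps by replacing the top term with its syzygy, yielding a bounded $X$-flat resolution. Forming a bicomplex whose rows are \v{C}ech resolutions and whose columns are $X$-flat resolutions, and taking its totalization, produces a bounded resolution of $\mathcal{G}^\cdot$ whose terms are quasi-coherent, $X$-flat, and universally strongly acyclic.

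The principal obstacle is combining the two steps so that the terms of the total complex enjoy both properties at once. The key preservation facts that make this work are: universal strong acyclicity is preserved under tensor product with quasi-coherent sheaves (immediate from the definition) and under pushforward through arbitrary morphisms (the lemma immediately following Lemma~\ref{affines_are_usacyclic}); and $X$-flatness is preserved under pushforward along the $Y$-direction by Lemma~\ref{contra}. Regularity of $X$ is essential for bounding the length of the flat resolution, and finiteness of the affine cover bounds the \v{C}ech direction; together these force boundedness of the total complex in $\mathbf{C}(X,Y)$.
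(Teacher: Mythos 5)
Your overall strategy coincides with the paper's: reduce via the approximation theorem of Thomason--Trobaugh (1.9.7, 1.9.8) to constructing, for each bounded complex of quasi-coherent sheaves with coherent cohomology on $X\times Y$, a bounded resolution by sheaves that are simultaneously universally strongly acyclic and $X$-flat, using a \v{C}ech device for acyclicity and the regularity of $X$ to truncate the flat direction. The paper carries this out, but routes it through the category of quasi-coherent $\mathfrak{U}$-modules of Alonso Tarr\'io--Jerem\'ias L\'opez--Lipman: one resolves the complex of $\mathfrak{U}$-modules by flat $\mathfrak{U}$-modules (which visibly exist, since one only needs flat modules over each affine in the cover), truncates there, and then applies the exact \v{C}ech functor $\check{\mathcal{C}}^*$, citing \emph{op.\ cit.}\ for the facts that $\check{\mathcal{C}}^*$ is exact and sends flat $\mathfrak{U}$-modules to flat quasi-coherent sheaves.

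Your version collapses this into a bicomplex of sheaves directly, and there are two places where it does not quite close. First, you propose resolving $\mathcal{G}^\cdot$ by free $\mathcal{O}_{X\times Y}$-modules; but it is not true that every quasi-coherent sheaf on $X\times Y$ is a quotient of a free one (for instance a line bundle with no nonzero global sections), so free resolutions need not exist. What is true, and what the $\mathfrak{U}$-module formalism is designed to deliver cheaply, is that there are enough \emph{flat} quasi-coherent sheaves on a quasi-compact separated scheme (equivalently, sheaves of the form $\bigoplus_I (j_I)_*\mathcal{O}_{U_I}^{(S_I)}$ for a finite affine cover, which also happen to be universally strongly acyclic). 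If you replace ``free'' with such sheaves, the column resolution exists; this is essentially what the paper does through the $\mathfrak{U}$-module category. Second, your justification that the \v{C}ech terms remain $X$-flat appeals to Lemma~\ref{contra}, which concerns pushforward along a proper morphism in the $Y$-direction and is not the relevant fact. What you actually need is that for an affine open immersion $j$ into $X\times Y$ the functor $j_*j^*$ preserves $X$-flatness; this follows from the projection formula $p_X^*\mathcal{N}\otimes j_*j^*\mathcal{P}\cong j_*\bigl(j^*p_X^*\mathcal{N}\otimes j^*\mathcal{P}\bigr)$ together with exactness of $j_*$ for affine $j$, and is again packaged inside the cited result of Alonso et al.\ in the paper's proof. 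With those two repairs your bicomplex argument matches the paper's in substance; the paper's choice to work with $\mathfrak{U}$-modules is precisely what lets it avoid both pitfalls by construction.
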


\begin{proof} By \cite{Thomason_Trobaugh} 1.9.7 and 1.9.8
it is enough to show:
\begin{itemize}
\item[i)] Any bounded complex ${\mathcal G}^{\cdot}$ of quasi-coherent sheaves on
$X \times Y$ is isomorphic in the derived category  to a
bounded complex ${\mathfrak F}^{\cdot}$ of universally strongly acyclic quasi-coherent sheaves with each ${\mathfrak F}^i$
flat over $X$.
\item[ii)] Any bounded complex ${\mathcal G}^{\cdot}$ of quasi-coherent sheaves on
$X \times Y$ with coherent cohomology is quasi-isomorphic to a
bounded complex of coherent sheaves.
\end{itemize}

Assertion ii) is a standard fact, since any quasi-coherent sheaf is
a direct limit of coherent sheaves.

To prove i), we have to deal with two issues, flatness and universal strong acyclicity.
The approach we take addresses both issues simultaneously.
If $V$ is a quasi-compact separated scheme, and
${\mathfrak U} = \{ U_i \}_{i=1}^n$ is a finite affine cover of $V$,  $\mathfrak{U}$ determines a diagram in the category of affine schemes,
with vertices the non-empty intersections $U_I$ ($\emptyset\neq I\subset \{1,\ldots,n\}$) of opens in the cover and arrows the inclusions.
Following \cite{Alonso_Lopez_Lipman} we define a quasi-coherent $\mathfrak{U}$-module $\mathfrak{M}$ to be a
family $M_I$ of $\mathcal{O}_V(U_I)$-modules, together
with maps $M_I\to M_J$ if $I\subset J$ which are $\mathcal{O}_V(U_J)$ linear and which satisfy the obvious presheaf condition.
A sequence of $\mathfrak{U}$-modules is exact if the induced sequences of $\mathcal{O}_V(U_I)$ modules are exact for all $I$, and a
$\mathfrak{U}$-module $\mathfrak{M}$ is flat if all the $M_I$ are flat.

There is an obvious exact functor from quasi-coherent sheaves on $V$ to quasi-coherent ${\mathfrak U}$-modules,
sending a quasi-coherent sheaf $\mathcal F$ to the ${\mathfrak U}$-module  $U_I\mapsto \mathcal{F}(U_I)$. On the other hand,
given a quasi-coherent $\mathfrak{U}$-module $\mathfrak{M}$, there
is a $\check{\rm C}$ech complex $\check{\mathcal{C}}^*(\mathfrak{M})$ which is a
bounded complex of quasi-coherent sheaves, and which is functorial with respect to $\mathfrak{M}$.
Since the sheaves in the complex are all direct sums of direct images of quasi-coherent sheaves on affine opens they are universally strongly acyclic.
If $\mathfrak{F}$ is the $\mathfrak{U}$-module associated to a quasi-coherent sheaf $\mathcal{F}$ on $V$, then
$\check{\mathcal{C}}^*(\mathfrak{F})$ is just the standard $\check{\rm C}$ech resolution $\check{\mathcal{C}}^* ({\mathfrak U} , {\mathcal F})$
which is quasi-isomorphic to ${\mathfrak F}$.
It is shown in section (1.2) of \emph{op. cit} that the functor $\mathfrak{M}\mapsto \check{\mathcal{C}}^*(\mathfrak{M})$ is exact, and that
if $\mathfrak{M}$ is flat as a $\mathfrak{U}$-module,
then $\check{\mathcal{C}}^*(\mathfrak{M})$ is a complex of flat quasi-coherent sheaves on $V$.  Furthermore
any quasi-coherent $\mathfrak{U}$-module is a quotient of a \emph{flat} quasi-coherent $\mathfrak{U}$-module.
Suppose now that $\mathcal{G}^*$ is a bounded complex of quasi-coherent sheaves on $V$.
Let $\mathfrak{G}^*$ be the associated complex of quasi-coherent $\mathfrak{U}$-modules, and choose a resolution
$\mathfrak{F}^*$ of $\mathfrak{G}^*$ by \emph{flat} quasi-coherent $\mathfrak{U}$-modules.
If we then then apply the functor $\check{\mathcal{C}}^*$ we obtain a complex $\check{\mathcal{C}}^*(\mathfrak{F}^*)$ of
universally strongly acyclic flat quasi-coherent $\mathcal{O}_V$-modules quasi-isomorphic to $\mathcal{G}^*$.

Returning now to the situation of part ii) of the theorem, suppose that $\mathcal{G}^*$ is a
bounded complex of quasi-coherent sheaves of $X\times Y$-modules.
Since we assume that $X$ is regular, $\mathcal{O}_X$ has finite global tor-dimension.
Hence if $\mathfrak{F}^*$ is a flat resolution
of $\mathfrak{G}^*$ as above, for sufficiently large $k$, the kernel of $\mathfrak{F}^{-k}\to \mathfrak{F}^{-k+1} $ while not flat as an
$\mathcal{O}_{X\times Y}$ module, will be flat as an $\mathcal{O}_X$-module.  Hence  taking the ``good'' truncation of the
complex $\mathfrak{F}^*$ at that point  we obtain a finite  resolution $\overline{\mathfrak{F}}^*$
of $\mathfrak{G}^*$ by $\mathcal{O}_X$-flat $\mathfrak{U}$-modules, and
so on applying the functor $\check{\mathcal{C}}^*$ we obtain a bounded complex, isomorphic in the derived category to $\mathcal{G}^*$,
of universally strongly acyclic
quasi-coherent sheaves which are flat over $\mathcal{O}_X$.
\end{proof}

\begin{cor}
Suppose that $X$ is regular and projective, $Y$ is an arbitrary
variety and $Y' \subset Y$ is a closed subscheme with open
complement $U$.  There is map $M_*(X,Y) \to M_*(X,U)$ (since the
restriction of an object ${\mathfrak F}^{\cdot}$ in ${\mathbf C} (X,Y)$
to $X \times U$ is clearly in ${\mathbf C} (X,U)$). Then the associated
sequence of maps of complexes:
$$
M_*(X,Y') \to M_*(X,Y) \to M_*(X,U) \, .
$$
induces a quasi-isomorphism from the cone of the map 
$$M_*(X,Y') \to
M_*(X,Y)$$ 
to $M_*(X,U) $.

\end{cor}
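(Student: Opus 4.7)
The plan is to reduce the claim to Quillen's localization theorem for $G$-theory via Proposition~\ref{G}. Since $X$ is regular and projective, that proposition identifies $\mathbf{KC}(X,Z)$ with $\mathbf{G}(X\times Z)$ for any variety $Z$, and hence $M_*(X,Z)$ with a chain complex computing $H_*(\mathbf{G}(X\times Z),\mathbb{Q})$, functorially for the operations we care about.

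Write $i:Y'\hookrightarrow Y$ for the closed immersion and $j:U\hookrightarrow Y$ for its open complement. The map $M_*(X,Y')\to M_*(X,Y)$ is induced by the closed-immersion pushforward $(1_X\times i)_*$ of Lemma~\ref{contra}(2), while $M_*(X,Y)\to M_*(X,U)$ is the open restriction $(1_X\times j)^*$ of Lemma~\ref{contra}(1). First I would verify that under the equivalences of Proposition~\ref{G} these two maps correspond to the standard $G$-theoretic pushforward and pullback along $1_X\times i$ and $1_X\times j$ respectively. Then I would invoke Quillen's localization theorem, which provides a fibration sequence
$$\mathbf{G}(X\times Y')\xrightarrow{(1_X\times i)_*}\mathbf{G}(X\times Y)\xrightarrow{(1_X\times j)^*}\mathbf{G}(X\times U).$$
After rationalizing and passing to the associated $M_*$ chain complexes this yields a distinguished triangle, which is exactly the assertion that the cone of $M_*(X,Y')\to M_*(X,Y)$ is quasi-isomorphic to $M_*(X,U)$.

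The main obstacle lies in the naturality verification of the first step, since the equivalence of Proposition~\ref{G} is established via resolutions by $\check{\mathrm{C}}$ech-type complexes of universally strongly acyclic flat sheaves, and such resolutions do not commute strictly with pushforward along a closed immersion. One way to handle this is to observe that both constructions compute the same functor on the homotopy category of pairs $(X,Z)$ with $Z\mapsto Z$ proper in the second variable, so the identification of the two pushforwards follows from uniqueness up to homotopy. Alternatively, one can bypass Proposition~\ref{G} entirely and argue directly inside $\mathbf{C}(X,Y)$: let $\mathbf{C}_{Y'}(X,Y)\subset\mathbf{C}(X,Y)$ be the full Waldhausen subcategory of complexes whose cohomology is supported on $X\times Y'$ (equivalently, acyclic after restriction to $X\times U$). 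Waldhausen's fibration theorem then produces a cofiber sequence $\mathbf{K}(\mathbf{C}_{Y'}(X,Y))\to\mathbf{K}(\mathbf{C}(X,Y))\to\mathbf{K}(\mathbf{C}(X,U))$, and a devissage argument (using that $X$ is regular, so flat resolutions of finite length exist) identifies $\mathbf{K}(\mathbf{C}_{Y'}(X,Y))$ with $\mathbf{K}(\mathbf{C}(X,Y'))$ via $(1_X\times i)_*$. Passing to $M_*$ and taking rational coefficients yields the desired triangle.
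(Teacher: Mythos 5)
Your first route is exactly the paper's proof: apply Proposition~\ref{G} to identify $M_*(X,Z)$ with a complex computing rational $G$-theory of $X\times Z$, then invoke Quillen's localization theorem for the pair $(X\times Y',X\times Y)$ with open complement $X\times U$. Your concern about naturality of the comparison is, however, unnecessary: the equivalence in Proposition~\ref{G} is induced by the \emph{inclusion} of Waldhausen categories ${\mathbf C}(X,Z)\hookrightarrow {\mathbf C}(X\times Z)$ (the category of all bounded complexes of quasi-coherent sheaves with coherent cohomology), and this inclusion functor commutes strictly both with $(1_X\times i)_*$ for $i$ a closed immersion and with $(1_X\times j)^*$ for $j$ an open immersion; the \v{C}ech/flat-resolution machinery only enters in proving that the inclusion is a $K$-theory equivalence, not in defining the comparison map. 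Your alternative argument via Waldhausen's fibration theorem and devissage inside ${\mathbf C}(X,Y)$ is plausible but is a more elaborate route to the same fibration sequence, and the paper does not take it.
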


\begin{proof} This follows from Quillen's localization
theorem, which implies that
$$
{\mathbf G} (X \times Y') \to {\mathbf G} (X \times Y) \to {\mathbf G} (X \times
U)
$$
is a fibration sequence of spectra. \end{proof}

\begin{defn}
Let $X$ be a variety, and $Y.$ a simplicial variety with proper face
maps. Then we define
$$
M_*(X,Y.) := {\rm Tot}_*(j\mapsto \, M_*(X , Y_j)) \, .
$$
\end{defn}

If $\beta. : Y. \to Z.$ is a morphism between simplicial varieties
with proper maps which is proper in each degree, we define
$$
M_*(X , \beta.) := \mbox{\rm cone} \, (M_*(X,Y.) \to M_*(X,Z.)) \, .
$$

If $X.$ is a simplicial variety, we define (with notation as above)
$$
M_*(X. , Y.) := {\rm Tot}^{\prod}_*\,(i\mapsto M_*(X_i , Y.))\; ,
$$
\emph{i.e.}, $M_k(X. , Y.)=\prod_i M_{i+k}(X_i , Y.))$, and
$$
M_*(X. , \beta.) := \mbox{\rm cone} \, (M_*(X.,Y.) \to M_*(X.,Z.))
\, .
$$
Note that since infinite products are exact, $M_*(X. , \beta.)$
 is isomorphic to
$$ {\rm Tot}^{\prod}_*\,(i\mapsto M_*(X_i , \beta.))\; .$$
If $\alpha : W. \to X.$ is an arrow between simplicial
varieties, we set
$$
M_*(\alpha. , \beta.) := \mathrm{cone} \, M_*(X. , \beta.) \to
M_*(W. , \beta.))[-1] \, .
$$

These constructions are all contravariant in the first variable, and
covariant in the second variable ({\it e.g.} with respect to
morphisms between arrows of simplicial objects in the category of
proper maps between varieties). There are also natural equivalences
when two of these constructions should agree ; for example if $X. =
X$ is a constant simplicial variety, the natural map
$$
M_*(X,Y.) \to M_*(X. , Y.)
$$
is a quasi-isomorphism.

Suppose that $X.$, $Y.$, $Z.$ are all simplicial proper varieties.
Then we have a composition:
$$M_*(X.,Y.)\otimes M_*(X.,Y.)\to M_*(X.,Z.) $$
which is associative up to homotopy, defined by
$$
M_i(X.,Y.)\otimes M_m(Y.,Z.)
\to M_{i+m}(X.,Z.)\\
$$
\begin{multline*}
\prod_j\left(\bigoplus_{k+l=j+i}M_l(X_j,Y_k)\right)\otimes
\prod_n\left(\bigoplus_{p+q=m+n}M_p(Y_n,Z_q)\right)\\
\to \prod_r\left(\bigoplus_{t+s=i+m+r}M_t(X_r,Z_s)\right)
\end{multline*}
which for $l+p=t$, $k$ and $s=q$, is induced by the pairings
$$
M_l(X_j,Y_k)\otimes M_p(Y_n,Z_q)\to M_{t}(X_r,Z_s)
$$
and is zero otherwise.

It is straightforward to check that this defines a category enriched
over the category of chain complexes of vector spaces (with respect
to the monoidal structure given by tensor product), with objects
simplicial varieties.

\begin{lemma}\label{M-descent}
Let $\pi. : Y'. \to Y.$ be a proper hypercover of simplicial
varieties with proper faces maps. Then, if $X.$ is a simplicial
regular projective variety, the map
$$
M_*(X. , Y'_{\cdot}) \to M_*(X.,Y.)
$$
is a quasi-ismorphism. Similarly, if $\pi : \beta'_{\cdot} \to
\beta.$ is a map of arrows
$$
\xymatrix{
Y'_{\cdot} \ar[d]_{\pi_Y} \ar[r]^{\beta'_{\cdot}} &Z'_{\cdot} \ar[d]^{\pi_Z} \\
Y. \ar[r]^{\beta.} &Z. \\
}
$$
with $\pi_Y$ and $\pi_Z$ proper hypercovers, we have a weak
equivalence, for all arrows $\alpha.$ between simplicial regular
projective varieties
$$
M_*(\alpha. , \beta'_{\cdot}) \to M_*(\alpha. , \beta) \, .
$$
\end{lemma}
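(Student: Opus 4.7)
The strategy is to reduce the lemma to descent for rational $G$-theory (Theorem~\ref{descent_for_Gthy}), via the identification of $M_*$ with the rational $K$-theory chain complex provided by Proposition~\ref{G}. There are three steps: reduce the arrow version to the simplicial-variety version, reduce the simplicial $X.$ to a single regular projective variety, and handle the single-variety case by appealing to $G$-theory descent after base change.

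\emph{Reduction of the arrow version.} By definition, $M_*(\alpha., \beta.)$ is (up to shift) the cone of $M_*(X., \beta.) \to M_*(W., \beta.)$, and each $M_*(-, \beta.)$ is the cone of $M_*(-, Y.) \to M_*(-, Z.)$, with parallel formulas for $\beta'.$. The morphism $\pi : \beta'. \to \beta.$ induces maps on each of the four corners $M_*(X., Y.), M_*(X., Z.), M_*(W., Y.), M_*(W., Z.)$. If the first assertion of the lemma holds applied to the hypercovers $\pi_Y$ and $\pi_Z$ against both $X.$ and $W.$, all four corner maps are quasi-isomorphisms, so iterating cones gives the desired quasi-isomorphism on $M_*(\alpha., -)$. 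Thus the second assertion follows from the first.

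\emph{Reduction to a single regular projective variety.} By definition $M_*(X., Y.) = \mathrm{Tot}^{\prod}(i \mapsto M_*(X_i, Y.))$. The decreasing filtration $F^p = \prod_{i \geq p} M_*(X_i, Y.)$ is preserved by the total differential (the contravariant face maps of $X.$ raise the $i$-index), is complete and Hausdorff, and its associated graded in bidegree $p$ is $F^p/F^{p+1} \cong M_*(X_p, Y.)$ (up to a degree shift). The standard convergence theorem for the spectral sequence of a complete Hausdorff filtration then gives: if $M_*(X_p, Y'.) \to M_*(X_p, Y.)$ is a quasi-isomorphism for every $p$, the induced map on $\mathrm{Tot}^{\prod}$ is too. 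So it suffices to treat $X. = X$ a single regular projective variety.

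\emph{The single-variety case.} Fix $X$ regular and projective. By Proposition~\ref{G}, the Waldhausen category $\mathbf{C}(X, Y_j)$ has $K$-theory equivalent to $\mathbf{G}(X \times Y_j)$, so the McCarthy complex $M_*(X, Y_j)$ computes the rational homotopy of $\mathbf{G}(X \times Y_j)$. By functoriality of the McCarthy construction under the pushforward exact functors along the face maps, the totalization $M_*(X, Y.) = \mathrm{Tot}(j \mapsto M_*(X, Y_j))$ computes the rational homotopy of the simplicial spectrum $j \mapsto \mathbf{G}(X \times Y_j)$, i.e.\ of $\mathbf{G}(X \times Y.)$; compare the descent spectral sequence of Lemma~\ref{DescentSpectralSequence}. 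The base change $X \times Y'. \to X \times Y.$ remains a proper hypercover of simplicial varieties with proper face maps (proper hypercovers are stable under base change by a map from a simplicial variety, cf.\ Lemma~\ref{base change hypercovers stacks}). By Theorem~\ref{descent_for_Gthy}, the induced map $\mathbf{G}(X \times Y'.)_{\mathbb{Q}} \to \mathbf{G}(X \times Y.)_{\mathbb{Q}}$ is a weak equivalence, and translating this back through the McCarthy identification gives the desired quasi-isomorphism.

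\emph{Main obstacle.} The most technical point is the clean identification in the last step between the McCarthy chain complex $M_*(X, Y.)$ of the simplicial Waldhausen category $j \mapsto \mathbf{C}(X, Y_j)$ and the rational homology of the homotopy colimit spectrum $\mathbf{G}(X \times Y.)$; one must know that McCarthy's construction converts simplicial $K$-theory spectra into the totalization of the associated simplicial chain complex, at the level of rational homology. Granting that naturality statement, and the spectral sequence convergence in Step~2 (over $\mathbb{Q}$ products are exact, so there is no derived-limit obstruction), the remainder of the proof is routine.
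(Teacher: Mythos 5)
Your proposal is correct and follows essentially the same route as the paper's (very terse) proof: for each fixed regular projective $X_i$ one identifies $M_*(X_i,Y.)$ with the rational $G$-theory of $X_i\times Y.$ via Proposition~\ref{G}, applies $G$-theory descent (Theorem~\ref{descent_for_Gthy}) to the base-changed proper hypercover, and then passes back to the infinite product over $i$ by a completeness argument. The only cosmetic difference is in the last step: the paper phrases the passage from each $X_i$ to $X.$ as a $\lim^1$ argument on the tower of finite truncations ${\rm Tot}^{\prod}(i\le n)$, whereas you phrase it as a spectral sequence comparison for the complete Hausdorff filtration $F^p=\prod_{i\ge p}$; these are equivalent, though the $\lim^1$/Milnor-sequence formulation is perhaps the safer one to cite, since a bare ``standard convergence theorem'' for complete filtrations requires some care about conditional convergence that the Milnor exact sequence for towers of epimorphisms settles directly.
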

\begin{proof}
For a fixed $X_i$ we know that
$M_*(X_i , Y'_{\cdot}) \to M_*(X_i,Y.)$ is a quasi-isomorphism, by theorem \ref{descent_for_Gthy}.
Since $M_*(X. , Y.)$ is the inverse limit, over $n\geq 0$, of ${\rm
Tot}^{\prod}_*\,(i\mapsto M_*(X_i , Y.),i\leq n)$ a standard
$\lim^1$ argument shows that $M_*(X. , Y'_{\cdot}) \to M_*(X.,Y.)$
is a quasi-isomorphism.  A similar argument gives the second
assertion of the lemma.
\end{proof}

\begin{lemma}
Suppose that $Z.$ is a simplicial variety with proper face maps, and
$Y. \subset Z.$ is a closed simplicial subvariety such that the
complement $U. = Z. \backslash Y.$ is an open simplicial subvariety.
Then, for all simplicial regular projective varieties $X.$, the
localization sequences
$$
M_*(X_i , Y_j) \to M_*(X_i , Z_j) \to M_*(X_i , U_j)
$$
induce a fibration sequence
$$
M_*(X. , Y.) \to M_*(X. , Z.) \to M_*(X. , U.)
$$
and similarly, if $\alpha. : W. \to X.$ is an arrow in the category
of simplicial regular projective varieties, we have a (co)fibration
sequence:
$$
M_*(\alpha. , Y.) \to M_*(\alpha. , Z.) \to M_*(\alpha. , U.) \, .
$$
\end{lemma}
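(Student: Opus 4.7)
The plan is to prove the statement stepwise: first at the level of individual $(X_i, Y_j, Z_j, U_j)$, then by totalizing over $j$, then over $i$, and finally by passing from $X.$ to arrows $\alpha. : W. \to X.$.

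For each pair $(i,j)$, the corollary following Proposition~\ref{G} (an instance of Quillen localization applied to the closed embedding $X_i \times Y_j \hookrightarrow X_i \times Z_j$ with open complement $X_i \times U_j$) asserts that the natural map from the mapping cone of $M_*(X_i,Y_j) \to M_*(X_i,Z_j)$ to $M_*(X_i,U_j)$ is a quasi-isomorphism; so the three-term sequence is a cofibration sequence of chain complexes of $\mathbb{Q}$-vector spaces.

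Fix $i$ and totalize over $j$ via $\mathrm{Tot}_*$. Since mapping cones commute with $\mathrm{Tot}_*$ of a bounded-below bicomplex, $M_*(X_i,Y.) \to M_*(X_i,Z.) \to M_*(X_i,U.)$ is again a cofibration sequence. Then totalize over $i$ with $\mathrm{Tot}^{\prod}_*$: the essential ingredient here is that arbitrary products are exact in the category of chain complexes of $\mathbb{Q}$-vector spaces, so $\mathrm{Tot}^{\prod}_*$ preserves quasi-isomorphisms and commutes with mapping cones. This yields the fibration sequence
$$M_*(X., Y.) \to M_*(X., Z.) \to M_*(X., U.).$$

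For the arrow version, recall $M_*(\alpha., V.) = \mathrm{cone}(M_*(X., V.) \to M_*(W., V.))[-1]$ for any simplicial variety $V.$. Writing down the evident $2 \times 3$ diagram whose rows are the fibration sequences just established for $X.$ and for $W.$ and whose columns are the defining maps of $M_*(\alpha., -)$, one takes vertical cones (or invokes the octahedral axiom) to obtain the fibration sequence for $\alpha.$. The only potentially subtle step is the $\mathrm{Tot}^{\prod}_*$-totalization, but it is harmless over $\mathbb{Q}$; without rational coefficients one would be forced into a $\lim^1$ argument in the spirit of Lemma~\ref{M-descent}.
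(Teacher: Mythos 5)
Your outline — degreewise localization via the corollary to Proposition~\ref{G} (legitimate here since each $X_i$ is regular and projective), totalization over $j$ and then over $i$, and the $2\times 3$ diagram for the arrow version — matches the paper's intent, but the one point the paper actually addresses is exactly the step you dismiss, and your justification for it is not valid. Exactness of arbitrary products gives that a product of quasi-isomorphisms of \emph{complexes} is a quasi-isomorphism, but it does not give that $\mathrm{Tot}^{\prod}_*$ of a map of double complexes which is a quasi-isomorphism on each column is a quasi-isomorphism: the horizontal and vertical differentials interact, and for double complexes unbounded in the product direction this genuinely fails (the standard staircase example, with exact columns but $H_0(\mathrm{Tot}^{\prod})\neq 0$, already lives over $\mathbb{Q}$). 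Moreover the dichotomy you draw between rational and integral coefficients is a red herring: arbitrary products are exact for abelian groups as well, and $\lim^1$ of a tower of $\mathbb{Q}$-vector spaces need not vanish, so tensoring with $\mathbb{Q}$ buys you nothing at this step. (The rational coefficients in this part of the paper come from descent for proper hypercovers, not from the totalization.)

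What does make the step work — and this is the paper's entire proof, ``again a standard $\lim^1$ argument'' as in Lemma~\ref{M-descent} — is the particular shape of $M_*(X.,-)$: it is the inverse limit over $n$ of the truncated totalizations $\mathrm{Tot}^{\prod}_*(i\mapsto M_*(X_i,-),\ i\le n)$, a tower with degreewise surjective transition maps. Each finite stage involves only finitely many columns, so cones commute with it and the degreewise fibration sequences pass to each stage; the Milnor $\lim$--$\lim^1$ exact sequence together with the five lemma (both the $\lim$ and the $\lim^1$ terms are compared isomorphically) then gives the fibration sequence for the limit. Equivalently, since the cosimplicial direction is bounded below at $i=0$ and the horizontal differential raises $i$, one can check by a direct induction on $i$ that the total cone in question is acyclic. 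Either way, the $\lim^1$ (or convergence) argument must be supplied, with any coefficients; once it is, the remainder of your argument — the first-quadrant totalization over $j$ and the octahedral step for $\alpha.$ — is correct and agrees with the paper.
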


\begin{proof}
Again a standard $\lim^1$ argument.
\end{proof}

\begin{lemma}\label{6011}
Let $g : X \to Y$ be a proper morphism in ${\mathcal V}$, and
$\Gamma_g:X\to X \times Y$ its graph. Let ${\mathcal O}_{\Gamma_g}$
be the object $(\Gamma_g)_*(\mathcal{O}_X)$  in ${\mathbf C} (X,Y)$.
Then if $p : Y' \to Y$ is proper, $p_* ({\mathcal O}_{\Gamma_g}) =
{\mathcal O}_{\Gamma_{p \circ g}}$. (Note that this is an
\emph{equality} rather than an isomorphism!). While if $f:X' \to X$
is a morphism, we have a canonical isomorphism:
$$
f^* ({\mathcal O}_{\Gamma_g}) \simeq {\mathcal O}_{\Gamma_{f \circ
g}}\, .
$$
Hence we have a well defined object in the big Zariski site
over $X$, $(f:X' \to X)\mapsto  (\Gamma_{(g\cdot
f)})_*(\mathcal{O}_{X'})$ .
\end{lemma}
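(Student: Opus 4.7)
\medskip

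\noindent\textbf{Proof plan.} Both statements follow from tautological identities at the level of schemes, combined with standard base change for closed immersions; no serious computation is involved.

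For the first assertion, the plan is simply to observe that as morphisms $X\to X\times Y'$ one has the literal equality
$$(1_X\times p)\circ \Gamma_g = \Gamma_{p\circ g},$$
since on $T$-valued points both send $x\mapsto (x,p(g(x)))$. Functoriality of direct image then gives
$$p_*({\mathcal O}_{\Gamma_g}) = (1_X\times p)_*(\Gamma_g)_*\mathcal{O}_X = \bigl((1_X\times p)\circ\Gamma_g\bigr)_*\mathcal{O}_X = (\Gamma_{p\circ g})_*\mathcal{O}_X = {\mathcal O}_{\Gamma_{p\circ g}},$$
an equality of sheaves on $X\times Y'$ (not merely an isomorphism), which is the content of the parenthetical remark in the statement.

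For the second assertion, my plan is to exhibit the cartesian square
$$
\begin{CD}
X' @>\Gamma_{g\circ f}>> X'\times Y \\
@VfVV @VV f\times 1_YV \\
X @>\Gamma_g>> X\times Y
\end{CD}
$$
The fact that it is cartesian is an easy check on points: the fibre product is $\{(x',y)\in X'\times Y : y=g(f(x'))\}=\Gamma_{g\circ f}$. Since $\Gamma_g$ is a closed immersion (our schemes are separated), the canonical base-change map
$$(f\times 1_Y)^*(\Gamma_g)_*\mathcal{O}_X \longrightarrow (\Gamma_{g\circ f})_*f^*\mathcal{O}_X=(\Gamma_{g\circ f})_*\mathcal{O}_{X'}$$
is an isomorphism (both sheaves are supported on $\Gamma_{g\circ f}$ and on that closed subscheme identify with $\mathcal{O}_{X'}$; concretely, the pullback of a closed subscheme along the fibre product square is the closed subscheme on the other side, and the structure sheaf pulls back correspondingly). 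This gives the canonical isomorphism $f^*({\mathcal O}_{\Gamma_g})\simeq {\mathcal O}_{\Gamma_{g\circ f}}$.

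The final assertion, that $(f:X'\to X)\mapsto (\Gamma_{g\circ f})_*\mathcal{O}_{X'}$ defines a well-defined object on the big Zariski site over $X$, follows from the canonical nature of the isomorphism in the previous paragraph together with functoriality of pullback under composition $f_1\circ f_2$, since both sides transform compatibly. There is no real obstacle here; the only thing to keep in mind is the distinction between the strict equality available for the proper pushforward (first part) and the merely canonical isomorphism available for pullback (second part), which is why the statement is phrased asymmetrically.
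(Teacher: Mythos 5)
The paper's own proof of this lemma is simply the word ``Exercise,'' so there is no argument to compare against; your job was to supply the missing details, and you have done so correctly. For the first assertion, you rightly use the literal (not merely canonical) identity $(1_X\times p)\circ\Gamma_g=\Gamma_{p\circ g}$ of morphisms $X\to X\times Y'$ together with the fact that underived pushforward of sheaves along a composite is the strict equality of functors $(a\circ b)_*=a_*\circ b_*$; this is exactly why the statement can claim an equality rather than an isomorphism. For the second, the square you exhibit is indeed cartesian, and since $\Gamma_g$ is a closed immersion (hence affine), base change for quasi-coherent sheaves applies without any flatness hypothesis on $f$, giving the canonical isomorphism $(f\times 1_Y)^*(\Gamma_g)_*\mathcal{O}_X\simeq(\Gamma_{g\circ f})_*\mathcal{O}_{X'}$. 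The closing observation that these canonical isomorphisms satisfy the cocycle condition (so that one really gets an object of the big Zariski site over $X$) is the right way to finish; it is automatic from the fact that affine base change is compatible with composition in the base.
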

\begin{proof} Exercise.
\end{proof}

On the category ${\rm Chain}(\mathbb{Q})$ of chain complexes
concentrated in positive degrees, $\mathrm{H}_0$ gives a functor to
the category of $\mathbb{Q}$-vector spaces, viewed as chain
complexes concentrated in degree zero, together with a natural
transformation
$$
\eta : {\rm Id} \to \mathrm{H}_0 \; .
$$
This is the homological equivalent of taking the 0-th stage of the
Postnikov tower of a spectrum.

Hence, given a pair of objects $X.$, $Y.$ in $RP^{\Delta^{\rm op}}$,
we get a map of cosimplicial simplicial chain complexes:
$$
(i,j) \mapsto (M_*(X_i , Y_j) \to {KC}(X_i , Y_j))
$$
since $\mathrm{H}_0(M_*(X_i , Y_j))\simeq  {KC}(X_i , Y_j)$, and
therefore we have a homomorphism
$$
\mathrm{H}_0(M_*(X. , Y.)) \to \mathrm{H}_0(\mathrm{Tot}_*(KC(X_i ,
Y_j))\; .
$$
But the target of this homomorphism is  equal to the homotopy
classes of maps $\Gamma_* (X.) \to \Gamma_* (Y.)$ (cf. \S 5.3). More generally,
given arrows $\alpha.$ and $\beta.$ in $RP^{\Delta^{\rm op}}$, we
get in a similar fashion a morphism
$$
\gamma : \mathrm{H}_0(\mathrm{M}_* (\alpha. , \beta.)) \to \{
\mbox{Homotopy classes of maps} \ \Gamma_* (\alpha.) \to \Gamma_*
(\beta.)\} \, .
$$

\begin{prop}
\label{truc2} Let $f : \alpha. = (W. \overset{a}{\longrightarrow}
X.) \to \beta. = (Y. \overset{b}{\longrightarrow} Z.)$ be a map of
arrows in $RP^{\Delta^{\rm op}}$, so that we have a commutative
diagram :
$$
\xymatrix{
W. \ar[d]_f \ar[r]^a &X. \ar[d]^g \\
Y. \ar[r]^b &Z. }
$$
of maps of simplicial regular projective varieties. Then there is a
$0$-cycle $[f]$ in $M_*(\alpha. , \beta.)$ such that $\gamma \, [f]$
is the homotopy class of the map $\Gamma_* (f)$.
\end{prop}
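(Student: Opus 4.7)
The plan is to realize $[f]$ as a $0$-cycle assembled from the structure sheaves of the graphs of $f_i \colon W_i \to Y_i$ and $g_i \colon X_i \to Z_i$, placed in the natural four corners of the double cone
\[
M_*(\alpha.,\beta.) = \mathrm{cone}\bigl(\mathrm{cone}(M_*(X.,Y.)\to M_*(X.,Z.)) \to \mathrm{cone}(M_*(W.,Y.)\to M_*(W.,Z.))\bigr)[-1].
\]
For each $i$ I would first fix a concrete object of $\mathbf{C}(W_i,Y_i)$ representing $\mathcal{O}_{\Gamma_{f_i}}$. Since this sheaf is $W_i$-flat with support proper over $Y_i$ but not \emph{a priori} universally strongly acyclic, I would apply the functorial $\check{\mathrm{C}}$ech machinery of Alonso-Jerem\'ias-Lipman used in the proof of Proposition~\ref{G} to produce a representative compatible with base change and proper push-forward; the same construction on the $g$-side gives an object of $\mathbf{C}(X_i,Z_i)$ representing $\mathcal{O}_{\Gamma_{g_i}}$. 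The resulting classes $[\Gamma_f] \in M_0(W.,Y.)$ and $[\Gamma_g] \in M_0(X.,Z.)$ are already individually $0$-cycles, because Lemma~\ref{6011} shows that face-map push-forwards and pull-backs of graph sheaves are graph sheaves of the corresponding compositions, so the alternating sums over the simplicial and cosimplicial faces telescope to zero.

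Next I would define
\[
[f] := \bigl([\Gamma_g],\,0,\,0,\,[\Gamma_f]\bigr),
\]
with the shift convention placing both $[\Gamma_g]$ and $[\Gamma_f]$ in total degree zero. The internal and simplicial parts of $\partial [f]$ vanish by the previous paragraph, so the only non-trivial contribution to $\partial [f]$ comes from the two connecting maps of the inner and outer cones, which land in $M_0(W.,Z.)$ and produce
\[
b_*[\Gamma_f] \;-\; a^*[\Gamma_g] \;\in\; M_0(W.,Z.).
\]
By Lemma~\ref{6011} both terms equal $[\mathcal{O}_{\Gamma_{b \cdot f}}] = [\mathcal{O}_{\Gamma_{g \cdot a}}]$, the final equality being the hypothesis $b \cdot f = g \cdot a$ that $f$ is a morphism of arrows. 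Hence $[f]$ is a $0$-cycle in $M_*(\alpha.,\beta.)$.

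Finally, to identify $\gamma[f]$ I would trace through the construction of $\gamma$: on each bidegree $(i,j)$ the natural transformation $\eta\colon M_*(W_i,Y_j)\to H_0(M_*(W_i,Y_j))=KC_S(W_i,Y_j)$ sends the class of $\mathcal{O}_{\Gamma_{f_i}}$ (when $i=j$) to $\Gamma(f_i)$, and similarly for $g$. Assembling over the total complex $\mathrm{Tot}_*\, KC_S(W_i,Y_j)$ one recovers $\Gamma_*(f)\colon \Gamma_*(W.)\to \Gamma_*(Y.)$ and $\Gamma_*(g)\colon \Gamma_*(X.)\to \Gamma_*(Z.)$ as the induced chain maps. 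The vanishing components in $M_*(X.,Y.)$ and $M_*(W.,Z.)$ record the \emph{strict} (rather than merely homotopic) commutation of the square, so $\gamma[f]$ is exactly the resulting morphism of cones $\mathrm{cone}(\Gamma_*(a))\to \mathrm{cone}(\Gamma_*(b))$, i.e.\ the homotopy class of $\Gamma_*(f)\colon \Gamma_*(\alpha.)\to \Gamma_*(\beta.)$. The main obstacle is the bookkeeping of signs coherently through the iterated cone together with the $\mathrm{Tot}$ (on $Y,Z$) and $\mathrm{Tot}^\prod$ (on $W,X$) conventions; once these are pinned down, the cycle identity reduces verbatim to the data of the map of arrows and the graph identities of Lemma~\ref{6011}, and the computation of $\gamma[f]$ is formal.
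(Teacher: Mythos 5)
Your proposal follows essentially the same approach as the paper's proof: you assemble the $0$-cycle out of the graph sheaves $\mathcal{O}_{\Gamma_{f_i}}$ and $\mathcal{O}_{\Gamma_{g_j}}$, use Lemma~\ref{6011} (compatibility of graph sheaves with composition, pull-back and push-forward) to check that the resulting element is a cycle, with the commutativity $b\cdot f = g\cdot a$ supplying the only nontrivial boundary cancellation, and then trace through $\gamma$ using the fact that $[\mathcal{O}_{\Gamma_{f_i}}]$ maps to $\Gamma(f_i)$ in $KC_S(W_i,Y_i)$. The paper's proof is considerably terser but makes exactly these moves.

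Where you add something is the worry about whether $\mathcal{O}_{\Gamma_{f_i}}$ actually lies in $\mathbf{C}(W_i,Y_i)$, i.e.\ whether the graph sheaf is universally strongly acyclic. Lemma~\ref{6011} in the paper simply declares $(\Gamma_g)_*(\mathcal{O}_X)$ to be an object of $\mathbf{C}(X,Y)$ and the proof of the Proposition invokes this without comment; you are right that this is not obvious (if $g:X\to Y$ is not affine, $\mathcal O_{\Gamma_g}$ has nontrivial higher direct images under $pr_Y$), and elsewhere the paper itself hedges, saying only that $\mathcal{O}_{\Gamma_f}$ is ``quasi-isomorphic to an object of $\mathbf{C}(X,Y)$''. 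However, your proposed fix—replacing the graph sheaf by a $\check{\mathrm C}$ech-type resolution—does not obviously preserve the \emph{strict} equalities $p_*(\mathcal O_{\Gamma_g})=\mathcal O_{\Gamma_{p\circ g}}$ and $f^*(\mathcal O_{\Gamma_g})\simeq\mathcal O_{\Gamma_{g\circ f}}$ from Lemma~\ref{6011}, which are exactly what make your four-corner element a cycle on the nose rather than only up to boundary: the affine cover of $W_i\times Y_j$ used in the resolution is not naturally compatible with $(1\times b_j)_*$ or with $a_i^*$. To make this rigorous one would need to either work in the auxiliary category $\mathbf{C}'$ (where the graph sheaf honestly lives, at the cost of direct image no longer being exact on arbitrary objects—though it \emph{is} exact on the graph sheaves themselves since $\Gamma_f\to\Gamma_{b\circ f}$ is an isomorphism), or choose the covers coherently over the whole bisimplicial system. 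So you have correctly flagged a genuine gap in the paper's Lemma~\ref{6011}, but the remedy you sketch needs the same coherence argument the paper is tacitly assuming. Apart from that the argument is sound, and the remaining sign/degree bookkeeping through the iterated cones and totalizations that you defer is indeed routine.
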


\begin{proof} It follows from Lemma~\ref{6011}, that the
sheaves ${\mathcal O}_{\Gamma_{f_i}}$ and ${\mathcal
O}_{\Gamma_{g_j}}$ determine elements $[{\mathcal
O}_{\Gamma_{f_i}}]$ and $[{\mathcal O}_{\Gamma_{g_j}}]$ in $M_0(W_i
, Y_i)$ and $M_0(X_j , Z_j)$, respectively, such that for all $i$,
$b_* [{\mathcal O}_{\Gamma_{f_i}}] = a^* [{\mathcal
O}_{\Gamma_{g_i}}]$, and which are compatible with all face and
degeneracy maps, and therefore define a $0$-cycle $[f]$ in
$M_*(\alpha. , \beta.)$. On the other hand, under the map $\gamma$,
for each $i$, the class of $[{\mathcal O}_{\Gamma_{f_i}}]$ in $K_0
\, C (W_i , Y_i)$ is the map $\Gamma_* (f_i)$, and similarly for
$\Gamma_{g_j}$, and so $\gamma \, [f] = \Gamma_* (f)$.
\end{proof}

Let ${\mathbf C}' (X,Y)$ be the Waldhausen category of bounded complexes
of quasi-coherent sheaves of ${\mathcal O}_{X \times Y}$ modules
which are flat over $X$, and which have coherent cohomology having
support proper over $Y$.

\begin{lemma}
\begin{itemize}
\item[{\rm 1)}] The inclusion of Waldhausen categories 
$${\mathbf C}(X,Y)\subset{\mathbf C}'(X,Y)$$ 
induces an isomorphism on $K$-theory.
\item[{\rm 2)}] ${\mathbf C}' (X,Y)$ is contravariant with respect to $X$ : given an object ${\mathfrak F}^{\cdot}$ in ${\mathbf C}' (X,Y)$
 and a morphism $f : X' \to X$, $f^* ({\mathfrak F}^{\cdot}) \in {\mathbf C}' (X' , Y)$.
\end{itemize}
\end{lemma}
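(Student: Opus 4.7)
The plan is to deduce (2) by a direct base-change computation and to establish (1) by Waldhausen's approximation theorem applied to the inclusion $\mathbf{C}(X,Y) \hookrightarrow \mathbf{C}'(X,Y)$. Since both Waldhausen categories share the same class of weak equivalences (quasi-isomorphisms), the reflection-of-weak-equivalences hypothesis of the approximation theorem is automatic, and the substance of (1) is the existence of a replacement of objects of $\mathbf{C}'(X,Y)$ by quasi-isomorphic objects of $\mathbf{C}(X,Y)$.

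For (2), write $\tilde{f} = f \times 1_Y$. Flatness of $\tilde{f}^\ast {\mathfrak F}^i$ over $X'$ is immediate from base-change flatness. Because each ${\mathfrak F}^i$ is $\mathcal{O}_X$-flat, $\tilde{f}^\ast$ is exact on the complex ${\mathfrak F}^\cdot$, so the cohomology sheaves of $\tilde{f}^\ast {\mathfrak F}^\cdot$ are $\tilde{f}^\ast H^i({\mathfrak F}^\cdot)$; these are coherent, with support contained in $\tilde{f}^{-1}(\mathrm{supp}\, H^i({\mathfrak F}^\cdot))$, whose properness over $Y$ is inherited through the base change.

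For (1), the substantive step is to produce, for every $\mathcal{G}^\cdot \in \mathbf{C}'(X,Y)$, a quasi-isomorphism $\mathcal{F}^\cdot \to \mathcal{G}^\cdot$ with $\mathcal{F}^\cdot \in \mathbf{C}(X,Y)$. I adapt the \v{C}ech construction from the proof of Proposition~\ref{G}: choose a finite affine cover of $X \times Y$ of the \emph{product} form $\mathfrak{U} = \{X_a \times Y_b\}$, where $\{X_a\}$ and $\{Y_b\}$ are affine covers of $X$ and $Y$ respectively. Since $X$ and $Y$ are separated, each nonempty multi-intersection $U_I$ is again an affine open product $X'_I \times Y'_I$. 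The \v{C}ech totalization $\mathcal{F}^\cdot := \check{\mathcal{C}}^\ast(\mathfrak{U}, \mathcal{G}^\cdot)$ is a bounded complex of quasi-coherent sheaves mapping quasi-isomorphically to $\mathcal{G}^\cdot$; each term is a direct sum of pushforwards $(j_I)_\ast(\mathcal{G}^i|_{U_I})$ along affine open immersions, hence universally strongly acyclic by Lemma~\ref{affines_are_usacyclic}. The key additional verification is that each term remains $\mathcal{O}_X$-flat: $j_I$ factors as a product of affine open immersions in the two factors, and pushforward along an affine open immersion in the $X$-direction preserves $\mathcal{O}_X$-flatness --- locally this amounts to the fact that an $A_f$-flat module is automatically $A$-flat via the localization $A \to A_f$, by transitivity of flatness. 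Thus $\mathcal{F}^\cdot$ lies in $\mathbf{C}(X,Y)$.

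For the factorization hypothesis of the approximation theorem, given a morphism $\mathcal{F}_0^\cdot \to \mathcal{G}^\cdot$ with $\mathcal{F}_0^\cdot \in \mathbf{C}(X,Y)$, a standard mapping-cylinder/pushout argument combined with the above \v{C}ech replacement produces the desired factorization $\mathcal{F}_0^\cdot \to \mathcal{F}_1^\cdot \stackrel{\sim}{\to} \mathcal{G}^\cdot$ through an intermediate $\mathcal{F}_1^\cdot$ in $\mathbf{C}(X,Y)$. The main technical obstacle is the verification that the \v{C}ech terms are genuinely $\mathcal{O}_X$-flat, which hinges critically on the choice of an affine cover respecting the product decomposition of $X \times Y$; once this is granted, the remainder of the argument is largely formal.
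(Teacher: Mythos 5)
The paper's own proof is simply the word ``straightforward,'' so there is no direct comparison to be made; but your argument for part~(2) contains a genuine error, even though the intended conclusion is correct.

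You assert that because each ${\mathfrak F}^i$ is $\mathcal{O}_X$-flat, $\tilde f^*$ ``is exact on the complex'' and hence $H^i(\tilde f^*{\mathfrak F}^\cdot) \simeq \tilde f^* H^i({\mathfrak F}^\cdot)$. This is false: the cohomology sheaves $H^i({\mathfrak F}^\cdot)$ need not themselves be $\mathcal{O}_X$-flat, so they produce higher $\mathcal{T}or$'s against $\mathcal{O}_{X'}$. A concrete counterexample: take $X = \mathbb{A}^1 = \operatorname{Spec} k[t]$, $Y$ a point, ${\mathfrak F}^\cdot$ the two-term complex $\mathcal{O}_X \xrightarrow{t} \mathcal{O}_X$ (both terms flat over $X$, cohomology coherent with proper support), and $f : \operatorname{Spec} k \hookrightarrow X$ the origin. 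Then $H^0({\mathfrak F}^\cdot) = 0$, yet $\tilde f^*{\mathfrak F}^\cdot$ is $k \xrightarrow{0} k$ with $H^0 = k \neq 0$. What the $\mathcal{O}_X$-flatness actually buys you is that the naive pull-back $\tilde f^*{\mathfrak F}^\cdot$ represents the \emph{derived} pull-back $L(f\times 1_Y)^*{\mathfrak F}^\cdot$ (since $\mathcal{O}_{X\times Y}$ is flat over $\mathcal{O}_X$, one has $(f\times 1_Y)^*{\mathfrak F}^i = {\mathfrak F}^i \otimes_{\mathcal{O}_X} \mathcal{O}_{X'}$, and the $\mathcal{O}_X$-flatness kills the Tor's there). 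One then argues that ${\mathfrak F}^\cdot$ lies in $D^b_{\mathrm{coh}}(X\times Y)$, that $L(f\times 1_Y)^*$ preserves coherence of cohomology on noetherian schemes (by local finite free resolutions), and that its cohomology is supported inside $(f\times 1_Y)^{-1}(\operatorname{supp} H^\cdot({\mathfrak F}^\cdot))$, whose properness over $Y$ is preserved by base change. So the conclusion holds, but you need the derived-category (or hyper-Tor spectral sequence) formulation, not the naive commutation with cohomology.

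Your argument for part~(1) is essentially right: a \v Cech replacement adapted from the proof of Proposition~\ref{G} works, and the terms $(j_I)_*(\mathcal{G}^i|_{U_I})$ are indeed universally strongly acyclic (Lemma~\ref{affines_are_usacyclic}) and $\mathcal{O}_X$-flat. However, the insistence on a cover of product form $\{X_a \times Y_b\}$ is unnecessary. For \emph{any} affine open $j_I : U_I \hookrightarrow X\times Y$ and any affine $W = \operatorname{Spec} B$ over $U = \operatorname{Spec} A \subset X$, separation gives $W\cap U_I$ affine with $B \to B' := \mathcal{O}(W\cap U_I)$ flat, whence $\mathcal{G}^i(W\cap U_I) = \mathcal{G}^i(W)\otimes_B B'$ is (by Lazard's theorem) a filtered colimit of finite direct sums of copies of the $A$-flat module $\mathcal{G}^i(W)$, hence $A$-flat. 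So the flatness of the \v Cech terms over $X$ holds for an arbitrary finite affine cover, and you may dispense with the product-cover hypothesis. With that simplification, and with the corrected argument for part~(2), the proof is sound.
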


\begin{proof} Straightforward.
\end{proof}

We write $M'_* (X,Y)$ for the complex $M_*({\mathbf C}'(X,Y))$ computing
the rational $K$-theory of ${\mathbf C}'(X,Y)$ . Suppose that $f : X \to
Y$ is a flat morphism of varieties. Pick a compactification $i : Y
\hookrightarrow \overline{Y}$ with complement $Z$. Note that $i
\cdot f : X \to \overline{Y}$ is also flat. The structure sheaf of
the graph of $f$, ${\mathcal O}_{\Gamma_f}$, is an object in ${\mathbf
C}' (X,Y)$ while ${\mathcal O}_{\Gamma_{i \circ f}}$ is an object in
${\mathbf C}' (\overline{Y} , X)$. Notice that the pull back of
${\mathcal O}_{\Gamma_{i \circ f}}$ to ${\mathbf C}' (Z,X)$ is
identically zero. Hence $[{\mathcal O}_{\Gamma_{i \circ f}}]$ gives
a $0$-cycle in the homotopy fibre of $M'_* (\overline{Y} , X) \to
M'_* (Z,X)$, which we denote $M'_* ((Z \to \overline{Y}) , X)$. Now
pick a non-singular hypercover $\pi : (\widetilde Z. \to \widetilde
Y.) \to (Z \to \overline{Y})$. Again by contravariance of ${\mathbf C}'
(\cdot , \cdot)$ in the first variable, we get a class (which we
denote $\pi^* ([{\mathcal O}_{\Gamma_{i \circ f}}]))$ in
$\mathrm{H}_0 \, M'_* ((\widetilde Z. \to \widetilde Y.) , X) \simeq
\mathrm{H}_0 \, M_* ((\widetilde Z. \to \widetilde Y.) , X)$

Now pick a compactification $\overline{X}$ of $X$ with complement
$W$. By localization, we have a fibration sequence, writing $\alpha.
= (\widetilde Z. \to \widetilde Y.)$,
$$
M_* (\alpha , W) \to M_* (\alpha , \overline{X}) \to M_* (\alpha ,
X)
$$
and hence a class in
$$
\zeta \in \mathrm{H}_0 (M_* (\alpha , (W \to \overline{X}))
$$
which maps to $\pi^* ([{\mathcal O}_{\Gamma_{i \circ f}}])$.

If $p : \beta = (\widetilde W. \to \widetilde X.) \to (W \to
\overline{X})$ is a regular proper hypercover of the arrow $(W \to
\overline{X})$, by descent (\ref{descent}), we have that
$$
p_* : M_* (\alpha , \beta) \to M_* (\alpha , (W \to \overline{X}))
$$
is a quasi-isomorphism, and hence we get a class $p_* (\zeta)^{-1} \in
\mathrm{H}_0 (M_* (\alpha , \beta))$. Now applying $\gamma$, we get
a homotopy class of maps $\Gamma_* (\alpha) \to \Gamma_* (\beta)$,
{\it i.e.} a map $f^* : h(Y) \to h(X)$.

We can compare this construction with the one situation in which we
have already defined a pull back map, \emph{i.e.} when $f:U\to X$ is
an open immersion:

\begin{prop}
Let $i:U\to X$ is an open immersion between varieties. Then the
associated map of weight complexes $h(X)\to h(U)$ which comes from
the definition of weight complexes, agrees with the map induced by
viewing ${\mathcal O}_{\Gamma_i}$ as being quasi-isomorphic to an
object of ${\mathbf C} (U,X)$.
\end{prop}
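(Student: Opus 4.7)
The plan is to exhibit both maps as coming from the same explicit $0$-cycle in $H_0 \, M_*(\alpha.,\beta.)$ and then invoke Proposition~\ref{truc2}. Write $\iota : U \hookrightarrow X$ for the open immersion. I first pick a compactification $\bar{X}$ of $X$ with complement $Z = \bar{X}\setminus X$; since $U$ is open in $X$, it is also open in $\bar{X}$, with closed complement $Z' := \bar{X}\setminus U$. Hence a single $\bar{X}$ serves simultaneously as the compactification of $X$ in the construction of $h(X)$ and of $U$ in the construction of $\iota^*$ preceding this proposition. Next I choose a split regular projective hypercover $\pi : \widetilde{\bar{X}}. \to \bar{X}$ and set $\widetilde{Z}. = \pi^{-1}(Z)$, $\widetilde{Z'}. = \pi^{-1}(Z')$, $\alpha. = (\widetilde{Z}. \to \widetilde{\bar{X}}.)$, and $\beta. = (\widetilde{Z'}. \to \widetilde{\bar{X}}.)$. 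Then $h(X) = \Gamma_*(\alpha.)$, $h(U) = \Gamma_*(\beta.)$, and the map from Theorem~\ref{simplicial}(ii) is $\Gamma_*(\varphi)$, where $\varphi : \alpha. \to \beta.$ is the morphism of arrows given by the closed immersion $\widetilde{Z}. \hookrightarrow \widetilde{Z'}.$ and the identity on $\widetilde{\bar{X}}.$.

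The key observation is that $\mathcal{O}_{\Gamma_{\iota'}}$ on $\bar{X} \times U$, where $\iota' : U \hookrightarrow \bar{X}$, is the restriction of the diagonal class $[\mathcal{O}_{\Delta_{\bar{X}}}]$ from $\bar{X}\times\bar{X}$. By Proposition~\ref{truc2}, the morphism $\varphi$ produces a $0$-cycle $[\varphi] \in M_0(\alpha.,\beta.)$, assembled from $[\mathcal{O}_{\Gamma_{\widetilde{Z}._n\hookrightarrow\widetilde{Z'}._n}}]$ and $[\mathcal{O}_{\Delta_{\widetilde{\bar{X}}._n}}]$, and satisfying $\gamma[\varphi] = \Gamma_*(\varphi)$. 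A direct computation with graphs of composed morphisms shows that the image of $[\varphi]$ under the composition
$$
M_*(\alpha.,\beta.) \xrightarrow{p_*} M_*(\alpha.,(Z'\to\bar{X})) \longrightarrow M_*(\alpha.,U)
$$
equals $\pi^*[\mathcal{O}_{\Gamma_{\iota'}}]$: the diagonal component pushes forward to $[\mathcal{O}_{\Gamma_{\pi_n}}]$ on $\widetilde{\bar{X}}._n \times \bar{X}$ and restricts over $U$ to the graph of $\pi_n|_{\pi_n^{-1}(U)}$, which is exactly $\pi^*\mathcal{O}_{\Gamma_{\iota'}}$, while the component on $\widetilde{Z}._n$ is supported in $\widetilde{Z}._n\times Z'$ and dies upon restriction to $U$.

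Both arrows in the composition are quasi-isomorphisms --- the first by Lemma~\ref{M-descent} applied to the proper hypercover $\beta. \to (Z'\to\bar{X})$, and the second by $G$-theory localization. Consequently $[\varphi]$ coincides in $H_0 \, M_*(\alpha.,\beta.)$ with the class $p_*^{-1}(\zeta)$ produced by the pullback construction, and applying $\gamma$ gives $\iota^* = \Gamma_*(\varphi)$, which is the localization map. The main technical obstacle lies in the cycle-level identifications of the second paragraph: one must unwind the double mapping cone defining $M_*(\alpha.,\beta.)$, verify that Proposition~\ref{truc2}'s compatibility condition holds here (which boils down to the fact that the graph of the composition $\widetilde{Z}. \hookrightarrow \widetilde{Z'}. \hookrightarrow \widetilde{\bar{X}}.$ agrees with the graph of the direct inclusion $\widetilde{Z}. \hookrightarrow \widetilde{\bar{X}}.$), and carry out the pushforward and restriction on cycles without boundary ambiguity. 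The remaining steps are then formal consequences of descent and localization.
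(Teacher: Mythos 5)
Your overall strategy mirrors the paper's: compactify, choose hypercovers, recognize both maps as arising from a $0$-cycle via Proposition~\ref{truc2}, and use descent and localization to match them. However, there is a genuine gap in the way you set up the hypercovers.

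You take a split regular projective hypercover $\pi : \widetilde{\bar{X}}. \to \bar{X}$ and then set $\widetilde{Z}. = \pi^{-1}(Z)$ and $\widetilde{Z'}. = \pi^{-1}(Z')$. But the scheme-theoretic preimage of a closed subset of $\bar{X}$ in a regular hypercover of $\bar{X}$ has no reason to be regular: resolving $\bar{X}$ does not resolve the boundary $Z'=\bar{X}\setminus U$. Consequently your arrows $\alpha. = (\widetilde{Z}. \to \widetilde{\bar{X}}.)$ and $\beta. = (\widetilde{Z'}. \to \widetilde{\bar{X}}.)$ do not lie in $RP_S^{\Delta^{\mathrm{op}}}$, and everything downstream that you invoke requires precisely that: Proposition~\ref{truc2} is stated only for arrows between simplicial regular projective varieties, the comparison map $\gamma$ is defined only when $H_0 M_*(X_i,Y_j)$ can be identified with $KC(X_i,Y_j)$ --- which uses Proposition~\ref{G} and hence regularity of $X_i$ --- and Lemma~\ref{M-descent} assumes the first argument $X.$ is regular. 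So the $0$-cycle $[\varphi]$ does not produce a map $\Gamma_*(\alpha.)\to\Gamma_*(\beta.)$ by the results you cite, and the equality you want to verify is not even formulated in the framework where these lemmas apply.

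The paper avoids this by choosing three \emph{separate} non-singular proper hypercovers $\widetilde{Y}.\to Y$, $\widetilde{Z}.\to Z$, $\widetilde{X}.\to \overline{X}$ that fit into a commutative diagram over $Y\to Z\to\overline{X}$ (this is possible because the class of proper hypercovers is closed under composition and one can iteratively refine). With that choice, $\alpha.$ and $\beta.$ are honestly arrows of simplicial regular projective varieties, Proposition~\ref{truc2} applies, and the remainder of the argument --- the descent and localization quasi-isomorphisms $M_*(\alpha.,\beta.)\simeq M_*(\alpha.,U)$ and the graph compatibilities of Lemma~\ref{6011} --- goes through. Your computation with graphs in the second paragraph is essentially what the paper needs, but you must first pass from the preimages $\pi^{-1}(Z)$, $\pi^{-1}(Z')$ to compatible regular hypercovers of $Z$ and $Z'$ before you are entitled to the tools you use.
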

\begin{proof} Recall that the map of weight complexes is
constructed as follows. Pick a compactification $\overline{X}$ of
$X$, and set $Y=\overline{X}\backslash X$ and
$Z=\overline{X}\backslash U$, so that $Y\subset Z\subset X$, with
all three proper varieties. We can find a diagram:
$$
\begin{CD}
\widetilde{Y}.@>>> \widetilde{Z}.@>>> \widetilde{X}. \\
@VVV      @VVV      @VVV \\
{Y}@>>> {Z}@>>> {X}
\end{CD}
$$
with $\widetilde{Y}\to Y$, $ \widetilde{Z}\to Z$ and
$\widetilde{X}.\to X$  all non-singular proper hypercovers. Consider
the induced commutative square:
$$
\begin{CD}
\widetilde{Y}.@>{\alpha}>> \widetilde{X}.\\
@V{j.}VV                 @V{1_{\widetilde{X}.}}VV   \\
\widetilde{Z}.@>{\beta}>> \widetilde{X}.
\end{CD}
$$
Following Proposition \ref{truc2}, we know that there is a
corresponding 0-cycle $M_*(\alpha, \beta)$ which induces the
corresponding map of weight complexes. By localization and descent,
$M_*(\alpha, \beta)\simeq M_*(\alpha., U)$ and by the compatibility
of the objects ${\mathcal O}_{\Gamma_g}$ of lemma \ref{6011} with
push-forward and pull-back on the source and target of $f$, we see
that the image of this zero cycle is exactly the 0-cycle used to
define pull back with respect to the flat morphism
 $i:U\to X$.
\end{proof}

\begin{thm}
Let $X,Y$ be projective varieties. Then there is a map
$$
\gamma : K_0({\mathbf C}(X,Y))_\mathbb{Q}\simeq \mathrm{H}_0(M_*(X,Y))
\to \mathrm{Hom}(h(X) , h(Y))
$$
which is covariant with respect to $Y$ and contravariant with
respect to $X$, and which is the identity if $X$ and $Y$ are regular
and projective (note that then $K_0 ({\mathbf C} (X,Y)) = G_0 (X \times
Y)$).
\end{thm}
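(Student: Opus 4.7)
The plan is to reduce to the case already handled for simplicial regular projective varieties (constructed just before Proposition~\ref{truc2}) by replacing the possibly singular $X$ and $Y$ with proper hypercovers having regular projective components. By Theorem~\ref{proper}, choose proper hypercovers $q : \tilde{X}. \to X$ and $p : \tilde{Y}. \to Y$ such that each $\tilde{X}_n$ and $\tilde{Y}_n$ is regular and projective over $S$; then $h(X) = \Gamma_*(\tilde{X}.)$ and $h(Y) = \Gamma_*(\tilde{Y}.)$ in the homotopy category of complexes of motives.

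Next, form the zigzag of chain complexes
$$M_*(X, Y) \xrightarrow{\; q^*\;} M_*(\tilde{X}., Y) \xleftarrow{\; p_*\;} M_*(\tilde{X}., \tilde{Y}.),$$
where $q^*$ is the contravariance in the first variable furnished by Lemma~\ref{contra}(1), applied degreewise on the source, and $p_*$ is the covariance in the second variable of Lemma~\ref{contra}(2), applied degreewise on the target. By Lemma~\ref{M-descent}, applied with the simplicial regular projective variety $\tilde{X}.$ in the first slot and the proper hypercover $\tilde{Y}. \to Y$ (viewing $Y$ as a constant simplicial variety) in the second, $p_*$ is a quasi-isomorphism. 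Define $\gamma$ as the composite
$$H_0(M_*(X, Y)) \xrightarrow{q^*} H_0(M_*(\tilde{X}., Y)) \xleftarrow[\cong]{p_*} H_0(M_*(\tilde{X}., \tilde{Y}.)) \xrightarrow{\gamma_0} \mathrm{Hom}(h(\tilde{X}.), h(\tilde{Y}.)) = \mathrm{Hom}(h(X), h(Y)),$$
where $\gamma_0$ is the morphism already constructed for pairs in $RP^{\Delta^{\rm op}}$.

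To see that $\gamma$ is well defined, given two sets of choices $(\tilde{X}., \tilde{Y}.)$ and $(\tilde{X}'_{\cdot}, \tilde{Y}'_{\cdot})$, one produces a third pair dominating both by forming the fibre products $\tilde{X}. \times_X \tilde{X}'_{\cdot}$ and $\tilde{Y}. \times_Y \tilde{Y}'_{\cdot}$ and then invoking Theorem~\ref{proper} once more to obtain regular projective proper hypercovers. The three versions of the zigzag are compatible because $q^*$ is natural in the first variable, $p_*$ is natural in the second, both are quasi-isomorphisms after descent by Lemma~\ref{M-descent}, and $\gamma_0$ is natural with respect to maps of arrows in $RP^{\Delta^{\rm op}}$. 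Covariance in $Y$ (for proper maps $Y \to Y'$) and contravariance in $X$ follow from the same naturality, by choosing compatible hypercovers of source and target. Finally, when $X$ and $Y$ are regular projective, we may take $\tilde{X}. = X$ and $\tilde{Y}. = Y$ as constant simplicial objects; then $q^*$ and $p_*$ become identities, and $\gamma$ reduces to $\gamma_0$, which by Proposition~\ref{truc2} sends the class $[\mathcal{O}_{\Gamma_f}]$ of a graph to the homotopy class of $\Gamma(f)$, i.e.\ to itself under the identifications $K_0(\mathbf{C}(X,Y))_\mathbb{Q} \simeq G_0(X \times Y)_\mathbb{Q} = KC_S(X,Y) = \mathrm{Hom}(h(X), h(Y))$ afforded by Proposition~\ref{G}.

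The main obstacle is verifying independence of the hypercover choices and functoriality in both variables simultaneously; both reduce to Lemma~\ref{M-descent} together with the standard trick of dominating two hypercovers by a third using Theorem~\ref{proper}, but one must be careful that the descent quasi-isomorphism of Lemma~\ref{M-descent} genuinely requires a simplicial \emph{regular projective} variety in the first slot, so the first-variable replacement $q^*$ must be performed before comparing hypercovers of $Y$, and vice versa.
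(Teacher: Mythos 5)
Your proposal is correct and follows essentially the same route as the paper: choose non-singular proper hypercovers of $X$ and $Y$, pull back in the first variable, use $M$-descent (Lemma~\ref{M-descent}) to invert the push-forward in the second variable, and then apply $\gamma_0$; well-definedness and functoriality are handled by dominating pairs of hypercovers, as in the proof of Theorem~\ref{simplicial}. Your added remark about the order in which the two replacements must be made (since Lemma~\ref{M-descent} requires a regular projective simplicial object in the first slot) is a sensible clarification, but it is the same argument.
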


\begin{proof} Let $p : \widetilde X. \to X$ and $q :
\widetilde Y. \to Y$ be non-singular proper hypercovers. Given
$\varphi \in K_0({\mathbf C}(X,Y))$, we have $p^* (\varphi) \in
\mathrm{H}_0 (M_* (\widetilde X. , Y))$ and $q_* : M_* (\widetilde
X.,\widetilde Y.) \to M_*(\widetilde X. , Y)$ is a weak equivalence.
Hence there is a unique $\widetilde\varphi \in
\mathrm{H}_0(M_*(\widetilde X. , \widetilde Y.))$ such that $q_*
(\widetilde\varphi) = p^* (\varphi)$.

Applying the natural transformation $\gamma$ defined before
\ref{truc2}, we get an element
$$
\gamma (\widetilde\varphi) \in H_0 ({\rm Hom} (\Gamma_* (\widetilde
X.) , \Gamma_* (\widetilde Y))) \, .
$$
That the homotopy class of this map does not depend on the choice of
$p : \widetilde X. \to X$ and $q : \widetilde Y. \to Y$ follows the
same pattern as \ref{simplicial}. Functoriality with respect to $X$
and $Y$ is an immediate consequence of the fact that the complex
$M_* (X,Y)$ is contravariant with respect to $X$ and covariant with
respect to $Y$. If $X$ and $Y$ are already regular, we take $p$ and
$q$ to be the identity and the assertion is clear. \end{proof}

Notice that it follows from this result that if $f : Y \to X$ is a
morphism of finite tor-dimension between projective varieties, then
there is a well defined map
$$
f^* : h(X) \to h(Y) \, ,
$$
since ${\mathcal O}_{\Gamma_f}$ is quasi-isomorphic to an object of
${\mathbf C} (X,Y)$.


\end{document}